\definecolor{seq1}{RGB}{12,44,132}
\definecolor{seq2}{RGB}{34,94,168}
\definecolor{seq3}{RGB}{29,145,192}
\definecolor{seq4}{RGB}{65,182,196}
\definecolor{seq5}{RGB}{127,205,187}
\definecolor{seq6}{RGB}{199,233,180}
\definecolor{seq7}{RGB}{237,248,177}
\definecolor{seq8}{RGB}{255,255,217}
\newcommand{\fcan}{f_\Omega}
\newcommand{\mucan}{{\mu_\Omega}}
\newcommand{\efcan}{\frac{1}{Z}e^{-\beta\left(\frac{p\cdot p}{2}+V(q)\right)}}
\newcommand{\fQ}{f_\mathcal{Q}}
\newcommand{\muQ}{{\mu_\mathcal{Q}}}
\newcommand{\pgen}[1]{G_{#1}}
\newcommand{\gen}{L}
\newcommand{\genl}{L_{\text{Lan}}}
\newcommand{\gens}{L_{\text{Smol}}}
\newcommand{\op}[1]{P^{#1}}
\newcommand{\opl}[1]{P_\text{Lan}^{#1}}
\newcommand{\ops}[1]{P_\text{Smol}^{#1}}
\newcommand{\Lw}[3]{\mathcal{L}^{#1}_{#2}\!\left({#3}\right)}
\newcommand{\C}[2]{\mathcal{C}^{#1}\left({#2}\right)}
\newcommand{\V}[2]{\mathcal{V}^{#1}\left({#2}\right)}
\newcommand{\W}[3]{\mathcal{W}^{#1}_{#2}\left({#3}\right)}
\newcommand{\LfQ}[1]{{\mathcal{L}^{#1}_{\mu_\mathcal{Q}}}}
\newcommand{\Qspace}{\mathcal{Q}}
\newcommand{\Pspace}{\mathcal{P}}
\newcommand{\Dom}[1]{\mathcal{D}\left({#1}\right)}
\theoremstyle{plain}
\newtheorem{theorem}{Theorem}[section]
\newtheorem{lemma}[theorem]{Lemma}
\newtheorem*{remark*}{Remark}
\newtheorem{remark}[theorem]{Remark}
\newtheorem{proposition}[theorem]{Proposition}
\newtheorem{corollary}[theorem]{Corollary}
\theoremstyle{definition}
\newtheorem{definition}[theorem]{Definition}
\newtheorem{assumption}[theorem]{Assumption}
\author{Andreas Bittracher\thanks{Center for Mathematics, Technische Universit\"at M\"unchen} \and P\'eter Koltai\thanks{Mathematics Institute, Freie Universit\"at Berlin} \and Oliver Junge\footnotemark[1]}
\title{Pseudo generators of spatial transfer operators}
\begin{document}
\maketitle

\begin{abstract}
Metastable behavior in dynamical systems may be a significant challenge for a simulation based analysis.  In recent years, transfer operator based approaches to problems exhibiting metastability have matured.  In order to make these approaches computationally feasible for larger systems, various reduction techniques have been proposed: For example, Sch{\"u}tte introduced a \emph{spatial transfer operator} which acts on densities on configuration space, while Weber proposed to avoid trajectory simulation (like Froyland et al.) by considering a discrete generator.  

In this manuscript, we show that even though the family of spatial transfer operators is not a semigroup, it possesses a well defined generating structure.  What is more, the \emph{pseudo generators} up to order 4 in the Taylor expansion of this family have particularly simple, explicit expressions involving no momentum averaging. This makes collocation methods particularly easy to implement and computationally efficient, which in turn may open the door for further efficiency improvements in, e.g., the computational treatment of conformation dynamics. We experimentally verify the predicted properties of these pseudo generators by means of two academic examples.
\end{abstract}

\section{Introduction}

\paragraph{Conformations of molecular systems}

The properties of many biomolecular systems such as proteins or enzymes depend heavily on their molecular configuration, i.e.\ the position of single atoms relative to each other. It is often observed that the system tends to "cluster" around certain key configurations. Transitions between these so-called \emph{conformations} can be considered rare events, as the time scale on which they occur and the characteristic dynamic time scale of the atoms in the molecule typically lie $10-15$ orders of magnitude apart. Nevertheless, these transitions play an essential role for the biological function of these molecules~\cite{ElKa87, Zhou98, FrMcM00, OWPN00, NoeEtAl06}. The reliable identification of these conformations and the probabilities (and rates) of transitions between them via direct numerical simulation is computationally very demanding if not infeasible for larger molecules.

\paragraph{Transfer operator based methods}

Molecular systems as described above are typically modeled as Hamiltonian systems, possibly including stochastic perturbations. Conformations then are almost-invariant (metastable) subsets of position space, corresponding to local minima of the potential energy surface. The ultimate goal of conformation analysis is to obtain a reduced model of the given system which accurately depicts these sets and the proper statistics of the transitions between them. This field of research, also called \emph{Markov state modeling}, attracted a lot of interest in the last decade~\cite{BHP09,ChoEtAl07,HuaEtAl10,PBB10,PriEtAl11,SchEtAl11,Web12}.

Pioneering work of Deuflhard, Dellnitz et al~\cite{Deu96, Deu01} exploits that these almost invariant sets can in principle be identified through eigenfunctions of a certain linear operator, the \emph{transfer operator}, which describes the evolution of distributions under the dynamics.

A direct application of this approach considers the operator acting on densities on the entire state space (i.e.\ position and momentum), while the conformational changes of interest are only observed in the position coordinate. Moreover, the approach is subject to the curse of dimension for all but the smallest systems, since a discretization of state space has to be constructed.

To remedy this, one might consider the ``overdampled'' or Smoluchowski dynamics~\cite{Hui01}, which acts on position space only, but this is a physically acceptable model of molecular motion only in the case that random collisions with the solvent overwhelm the effect of inertia in the molecule of interest. Sch\"utte~\cite{Sch99} came up with a physically justifiable solution as he introduced the so-called \emph{spatial dynamics}, whose metastable sets still bear the interpretation of molecular dynamical conformations. The associated \emph{spatial transfer operator} acts on densities on position space and can be seen as a momentum-averaged version of the full Hamiltonian or Langevin dynamics.

Commonly, the transfer operator is finitely approximated by a stochastic matrix, whose entries can be interpreted as transition probabilities of single system instances in the canonical ensemble from some subset (in state or position space) to another. These transition probabilities in turn are computed by short time integration of a number of trajectories starting in each subset.  Thus, the computation of a  few long simulations is replaced by the computation of many short trajectories for an ensemble of adequately distributed initial conditions.  Still, the momentum averaging has to be done explicitely by additionally sampling the momentum space for each of these initial conditions.

Over the years, different techniques for a finite approximation of transfer operators have been proposed, we refer to \cite{Del99,dellnitz2001algorithms} and the references there. More recently, Weber~\cite{Web06} used meshfree approximation techniques and showed that for a given approximation error, the number of basis elements scales with the number of metastable sets, not necessarily with the dimension of the sytem.  In \cite{Jun09}, an approximation by a sparse Haar space was proposed  in order to mollify the curse of dimension, while in \cite{Fri09} a tensor-product construction was used in combination with a mean field- approach. 
A novel approach to coarse-grain a multi-scale system by discretizing its transfer operator without using a full partition of the phase space~\cite{SchEtAl11} excels especially in efficiently reproducing the dominant time scales of the original system, but relies heavily on long trajectory simulations.

Elaborate schemes to extract the metastability information from the eigenvectors of the (approximate) transfer operator were developed in \cite{Hui01, Hui05, Deu04, HMSch04}. 

\paragraph{Simulation-free and generator-based methods}

All these methods rely on the numerical integration of trajectories. Only recently, methods have been proposed that require no time integration, albeit imposing further requirements on the system \cite{Kol10,Web12,Fro13}:  Under these conditions and provided that the system's transfer operator forms a continuous time semigroup, one can exploit that the eigenvalues and -functions are the same as those of the semigroup's \emph{infinitesimal generator}. Discretizing this generator requires no time-integration and is thus computationally considerably cheaper than classical methods.

\paragraph{This manuscript} Unfortunately, Sch\"utte's spatial transfer operator is not a time semigroup. In this contribution, we define suitable \emph{pseudo generators} of non-semigroup families of operators which inherit desirable properties of the spatial transfer operator as well as ``restored'' operators which approximate the spatial operator at least for small times. The appeal of these constructions from a numerical perspective is threefold:
\begin{enumerate}[(1)]
\item no numerical time integration is needed,
\item momentum averaging is accomplished analytically, i.e.\ momentum sampling is completely avoided,
\item the pseudo generators can be discretized by collocation methods, avoiding costly boundary integrals.
\end{enumerate}
We establish theoretical asymptotic estimates on the error of density propagation, and validate them numerically. The numerical experiments indicate that the information on metastable sets (i.e.\ conformations) gained from the restored operators remains close to the ``original'' one gained from the exact spatial transfer operator, even for times beyond those guaranteed by our estimates. A quantitative understanding of this phenomenon (also observed by Sch\"utte~\cite{Sch99}) is still lacking; some steps towards a theoretical explanation have been made in~\cite{BiHaKoJu15}. 

The manuscript is structured as follows. In Section~2, we introduce the basic dynamical models we are  working with and describe their action on propagating (probability) densities by transfer operators. This necessitates the discussion of operator semigroups, given at the end of the section. Section~3 is concerned with fluctuations in the spatial distribution of the system governed by its dynamics, leading to the concept of spatial transfer operators as well as metastability in position space. In Section~4, we introduce the concept of pseudo generators, the corresponding restored operators, and give asymptotic error estimates on their approximation quality. Section~5 includes numerical experiments. We conclude our work in Section~6, and discuss future directions to make the method applicable for realistic \mbox{(bio-)}molecular systems. 
Three appendices are given: Appendix~\ref{proofs} gives a detailed derivation of the pseudo generators up to order~3; Appendix~\ref{sec:spatial_spectrum} gives a complete and self-contained proof of the applicability of Huisinga's theory~\cite{Hui01, Hui05} on the quantitavive identification of metastable components from spectral analysis of transfer operators for the spatial transfer operator based on Langevin dynamics (especially reversibility and ergodicity of the spatial dynamics, which are probably known or at least anticipated, however we could not find neither a statement, nor even a partial derivation of these properties); in Appendix~\ref{sec:smoothness}, we show that the eigenfunctions of the spatial transfer operator are smooth, i.e.\ infinitely differentiable, if the potential is a smooth function.

\section{Transfer operators and their generators}

In this section we introduce the dynamical systems of interest as well as the concept of transfer operators for describing statistical transport under these dynamics.

\subsection{Stochastic dynamics}\label{stochasticdynamics}

Broadly speaking, we will be studying continuous time stochastic dynamical systems on a phase space $\Omega\subset\mathbb{R}^d$. They will be described by $\Omega$-valued random variables $\bm{x}_t$, $t\ge0$, following an It\^o diffusion equation, i.e. a stochastic differential equation of the form
\begin{equation}\label{itodiffusion}
\partial_t \bm{x}_t=b(\bm{x}_t)+\Sigma(\bm{x}_t)\bm{w}_t\,.
\end{equation}
Here, $\partial_t$ denotes the differentiation with respect to~$t$, $b:\Omega\rightarrow \mathbb{R}^d,~\Sigma:\Omega\rightarrow\mathbb{R}^{d\times d}$ and~$\bm{w}_t$ is a $\mathbb{R}^d$-valued ``white noise'' term, see e.g.~\cite[p.~61]{Oks98}. The functions $b$ and $\Sigma$ are assumed to be globally Lipschitz and growing at most linearly at infinity, such that~\eqref{itodiffusion} has unique solutions (cf.~\cite[Theorem~5.2.1]{Oks98}). Here and in the following boldface lower case letters denote random variables. 

\paragraph{Molecular dynamics}  Consider a molecular system described by $d\in\mathbb{N}$ positional degrees of freedom. Typically, these correspond to either internal coordinates or particle positions in $\mathbb{R}^{3n}$, $n$ being the number of particles. Let $\Qspace\subset\mathbb{R}^d$ denote the configuration space.

Let a potential $V:\Qspace\rightarrow \mathbb{R}$, describing the relative energy of a given configuration~$q\in\Qspace$, be continously differentiable\footnote{Later on, we will impose stronger assumptions on~$V$.}. Under the assumption of strict total energy, the movement of the system is then described by classical deterministic Hamiltonian dynamics:
\begin{equation}\label{hamiltondynamics}
\begin{aligned}
\partial_t q&=p\\
\partial_t p&=-\nabla V(q).
\end{aligned}
\end{equation}
The phase space is thus $\Omega=\Qspace\times\Pspace$, with $q\in\Qspace$ the position and $p\in\Pspace=\mathbb{R}^d$ the momentum coordinate. For simplicity, we set the mass matrix~$M$ to be the identity, otherwise the first equation in~\eqref{hamiltondynamics} would be $\partial_t q = M^{-1}p$.

Equation~\eqref{hamiltondynamics} models a system ``in vacuo'', independent from external influence. Of more physical relevance, however, are systems which are stochastically coupled to their surroundings, physically motivated by the presence of a heat bath or implicit solvent not modeled explicitly. A prominent way of doing this is via a drift-diffusion perturbation of (\ref{hamiltondynamics}), known as the Langevin equations, which can be formally derived using averaging techniques from the Mori-Zwanzig formalism~\cite{Zwa01},
\begin{equation}\label{langevindynamics}
\begin{aligned}
\partial_t\bm{q}_t &= \bm{p}_t\\
\partial_t\bm{p}_t &= -\nabla V(\bm{q}_t) - \gamma \bm{p}_t +\sigma \bm{w}_t~.
\end{aligned}
\end{equation}
These can be written in the form of (\ref{itodiffusion}) with
$$
b(q,p)=\begin{pmatrix}p\\ -\nabla V(q)-\gamma p\end{pmatrix}~~\text{and}~~\Sigma(q,p) = \begin{pmatrix}0&0\\0& \sigma\end{pmatrix}.
$$
The term $-\gamma \bm{p}_t$ mimics the drag through the implicitly present solvent, $\sigma \bm{w}_t$ accounts for random collisions with the solvent particles. To balance damping and excitation and to keep the system at a constant average internal energy\footnote{Actually, $\beta$ is the inverse temperature, $\beta=1/(k_BT)$, with Boltzmann's constant $k_B$ and $T$ the system temperature.} $\beta$, we set $\sigma=\sqrt{2\gamma/ \beta}$. The choice of~$\gamma$ is problem dependent, and mimics the viscosity of the aforementioned implicit solvent. For further details on the modeling see~\cite{Cra04}.

An even further model reduction leads to the so-called Smoluchowski dynamics. In the second-order form of (\ref{langevindynamics}),
\begin{equation}\label{secondorderlangevindynamics}
\partial_t^2\bm{q}_t=-\nabla V(\bm{q}_t)-\gamma \partial_t\bm{q}_t+\sigma \bm{w}_t\,,
\end{equation}
we consider a high-friction situation $\gamma\to\infty$. After appropriate rescaling of the time, $\tau=\gamma^{-1} t$ (in order to be able to observe movement under the now extremely slow dynamics), (\ref{secondorderlangevindynamics}) becomes
$$
\gamma^{-2}\partial_{\tau}^2\bm{q}_{\tau} = -\nabla V(\bm{q}_{\tau})- \partial_{\tau}\bm{q}_{\tau}+\sqrt{\frac{2}{\beta}} \bm{w}_{\tau}.
$$
In the limit $\gamma\to\infty$, this yields the \emph{Smoluchowski equation}
\begin{equation}\label{smoluchowskidynamics}
\partial_{\tau}\bm{q}_{\tau} = -\nabla V(\bm{q}_{\tau}) + \sqrt{\frac{2}{\beta}} \bm{w}_{\tau}.
\end{equation}
This conceptual derivation can be made precise by considering stochastic convergence notions. The interested reader is referred either to~\cite{Nel63}, where the physical intuition and mathematical rigor are both kept at a high level\footnote{A reader with a physicists view might be irritated by the fact that the terms with~$\nabla V$ and~$\bm w_t$ act as forces (or accelerations) in \eqref{langevindynamics} and as velocities in~\eqref{smoluchowskidynamics}. By eliminating the damping coefficient~$\gamma$, the physical dimensions of the terms changed. Since the mathematical statement is not affected by this, we shall not go into details.}, or to~\cite{Pav08}, where homogenization techniques for the transfer operators of the underlying equations are exploited. Next, we will consider this latter, \emph{operator-based} characterization of stochastic processes.

\subsection{Transfer operators}\label{ssec:transferoperator}

We shall now examine how \emph{phase space density functions} evolve under the dynamics induced by~\eqref{itodiffusion}. That is, given a probability density at time $t_0=0$, what is the probability to find the system in a certain region at time $t>t_0$? More precisely, given $\bm x_0 \sim f = f_0$ (a random variable $\bm x_0$ distributed according to the density~$f_0$), find $f_t$ with $\bm x_t\sim f_t$, $t\ge0$, where the evolution of $\bm x_t$ is governed by~\eqref{itodiffusion}.



To this end, let $(\Omega,\mathcal{B},\mu)$ be a probability space with $\mathcal{B}$ denoting the Borel $\sigma$-algebra, and consider the \emph{stochastic transition function} $p:\mathbb{R}_{\ge0}\times\Omega\times\mathcal{B}\to [0,1]$,
\[
p(t,x,B) = \mathrm{Prob}\left[ \bm x_t\in B\,\vert\, \bm x_0=x\text{ with probability 1}\right],
\]
and denote by
\begin{equation}\label{transitionprobability}
p_{\mu}(t,A,B) := \mathrm{Prob}_{\mu}\left[\bm x_t\in B\,\vert\,\bm x_0\in A\right]
\end{equation}
the \emph{transition probabilities} between $A\in\mathcal{B}$ and $B\in\mathcal{B}$, where $\mathrm{Prob}_{\mu}$ indicates that $\bm x_0\sim\mu$; i.e.\ the initial condition is distributed according to~$\mu$. For the long term macroscopic behavior of the system, sets $A\subset\Omega$ play an important role for which~$p_{\mu}(t,A,A)\approx 1$ for some physically relevant measure~$\mu$ and times~$t>0$.

Now assume that an initial distribution $\bm x_0\sim f =: f_0\in\Lw{1}{\mu}{\Omega}$\footnote{In the literature, $\mathcal{L}^p$ sometimes denotes the ``pre-Lebesgue space'', i.e. the Lebesgue space before equivalence class formation, and $L^p$ usually denotes the actual Lebesgue space. Due to clash of notation, however, we call the actual Lebesgue space $\mathcal{L}^p$ and use $\|\cdot\|_{k,\mu}$ to denote the standard norm.} is given. 
We then have that $\bm x_t\sim f_t$ with
\begin{equation}\label{stochtransport}
\int_Bf_t(x)~d\mu(x) = \int_\Omega f_0(x)p(t,x,B)\,d\mu(x),\qquad \forall\ B\in\mathcal{B}.
\end{equation}
Under mild conditions\footnote{See e.g.~\cite{Las94}.}, satisfied by the systems considered here, $f_t$ is uniquely defined by~\eqref{stochtransport}. This yields the \emph{transfer operator with lag time~$t$} $P^t:\Lw{1}{\mu}{\Omega}\to \Lw{1}{\mu}{\Omega}$ via
\[
\op{t} f(x) := f_t(x)
\]
where we extend the definition of $\op{t}$ from densities to arbitrary integrable functions using linearity. In the deterministic case, this is the so-called \emph{Perron--Frobenius operator}. 

For some of the following results, it will be necessary to distinguish between the transfer operators of the general It\^o diffusion (\ref{itodiffusion}), and the special cases of the Langevin (\ref{langevindynamics}) and Smoluchowski dynamics (\ref{smoluchowskidynamics}). We will then refer to them as $\op{t}$, $\opl{t}$ and $\ops{t}$, respectively. Of course, statements concerning $\op{t}$ hold for $\opl{t}$ and $\ops{t}$ as well.

\paragraph{Some properties of $P^t$}
$\op{t}$ can be considered a time-parametrized family of linear operators, which then possesses the Chapman--Kolmogorov (or semigroup) property:

\begin{enumerate}
\item[(i)] $\lim_{t\rightarrow 0}P^tf=f$,
\item[(ii)] $\op{t+s}f=\op{t}\big(\op{s}f\big)$ for all $s,t\ge0$.
\end{enumerate}

While a stochastic interpretation only makes sense in the preceding setting, the formal extension of $\op{t}$ to the spaces~$\Lw{k}{\mu}{\Omega}$, $1\le k\le \infty$, is well defined for proper choices of~$\mu$ (see Corollary \ref{cor:opcontraction}).
We thus have that $\|\op{t}\|_{k,\mu} \le 1$ and  $\op{t}f\ge 0$ for $0\le f\in \Lw{k}{\mu}{\Omega}$.

Using the standard scalar product on $\Lw{2}{\mu}{\Omega}$, for $\mu(A)>0$, the transition probabilities can be expressed via the transfer operator:
\[
p_{\mu}(t,A,B) = \frac{1}{\mu(A)}\int_B \op{t}\chi_{A}\,d\mu = \frac{1}{\mu(A)}\int_\Omega \op{t}\chi_{A}\chi_{B}\,d\mu=\frac{\langle P^t\chi_A,\chi_B\rangle_{2,\mu}}{\langle \chi_A,\chi_A\rangle_{2,\mu}}
\]
with~$\chi$ being the indicator function.

\subsection{Infinitesimal generators}

The semigroup property basically means that~$\op{t}$ is ``memoryless'' (in other terms, \eqref{itodiffusion} generates a Markov process). The identity $\op{t} = \left(\op{t/n}\right)^n$ suggests that all information about the density transport is contained in $P^\tau$ for \emph{arbitrarily small} $\tau$.

This is formalized by looking at an operator $\gen:\Dom{\gen}\rightarrow \Lw{k}{\mu}{\Omega}$ given by
\begin{equation}\label{generator}
\gen f=\lim_{\tau\rightarrow0}\frac{\op{\tau}f-f}{\tau},
\end{equation}
where $\Dom{\gen}\subset \Lw{k}{\mu}{\Omega}$ is the linear subspace of $\Lw{k}{\mu}{\Omega}$ where the above limit exists. $\gen$ is called the \emph{infinitesimal generator} of the semigroup $\op{t}$, and the field of operator semigroup theory~\cite{Paz83} answers the question in which sense $\op{t}$ is a solution operator to the Cauchy problem $\partial_t f_t = \gen f_t$. Essentially, the power of the infinitesimal generator lies in the fact that all the relevant information about~$\op{t}$ for \emph{all times $t\ge0$} is already encoded in~$\gen$. We will discuss this below.

\paragraph{Invariant density}

Having its interpretation in mind, it is not surprising that the infinitesimal generator is exactly the right hand side of the parabolic partial differential equation describing the flow of sufficiently regular densities, the \emph{Kolmogorov forward equation} or \emph{Fokker--Planck equation}, see \cite[p.~282]{Kar91}:
\begin{equation}\label{fokkerplanckequationito}
\partial_tf_t(x) = \underbrace{\frac{1}{2}\sum_{i=1}^d\sum_{k=1}^d\frac{\partial^2}{\partial x_i\partial x_k}\big(\Sigma_{ik}(x)f_t(x)\big) - \sum_{i=1}^d\frac{\partial}{\partial x_i}\big(b_i(x)f_t(x)\big)}_{=:\gen f_t(x)}.
\end{equation}
In the Langevin case, this simplifies to
\begin{equation}\label{fokkerplanckequationlangevin}
\partial_t f_t(q,p) = \underbrace{\Big(\frac{\gamma}{\beta}\Delta_p-p\cdot\nabla_q+\nabla_qV(q)\cdot\nabla_p+\gamma p\cdot\nabla_p+d\gamma\Big)}_{=:\genl}f_t(q,p)\, ,
\end{equation}
where the dot denotes the Euclidean inner product, $\nabla_x$ and $\Delta_x$ are the gradient and Laplace operators with respect to~$x$, respectively. For Smoluchowski, it is
\begin{equation}\label{fokkerplanckequationsmoluchowski}
\partial_t f_t(q) = \underbrace{\Big( \frac{1}{\beta}\Delta_q+\nabla_qV\cdot\nabla_q +\Delta_qV \Big)}_{=:\gens}f_t(q).
\end{equation}

Densities which are invariant under the dynamics play a naturally prominent role. Since they are fixed under~$\op{t}$ for any $t\ge0$, by~\eqref{generator} they lie in the kernel of~$\gen$; see also Corollary~\ref{cor:invdens} below. For the stochastic processes considered here the invariant density can be shown to be unique (cf.~\cite{Oks98}), and---using the term from statistical mechanics---we call it the \emph{canonical density},~$\fcan$. For the Langevin dynamics 
\begin{equation}\label{canonicaldensity}
\fcan(q,p)=f_\Qspace(q)\cdot f_\Pspace(p),
\end{equation}
where
\begin{equation*}
	f_\Qspace(q) := \frac{1}{Z_\Qspace}\exp\big(-\beta V(q)\big), \qquad 	f_\Pspace(p) := \frac{1}{Z_\Pspace}\exp\big(-\beta\frac{p\cdot p}{2}\big),
\end{equation*}
with $Z_\Pspace=\int_\Pspace\exp\big(-\beta\frac{p\cdot p}{2}\big)dp$ and $Z_\Qspace=\int_\Pspace\exp\big(-\beta V(q)\big)dq$. For the Smoluchowski dynamics, $f_\Omega(q)=f_{\Qspace}(q)$ is the canonical density. $f_\Omega$ is a density with respect to the Lebesgue measure~$m$, and its existence requires the integrability of~$\exp(-\beta V)$; which is from now on assumed to hold. 

To understand their relevance, note that the canonical density not just describes the statistical equilibrium of the system, but the system also tends to this equilibrium as time grows, i.e.\ according to whatever $f_0$ the system is distributed initially, $f_t\to\fcan$ as $t\to\infty$ in~$\Lw{1}{m}{\Omega}$.

\paragraph{Domain and Spectral properties}

The results of this paragraph hold true for the transfer operators of both the Langevin and the Smoluchowski dynamics, by taking the corresponding phase space and invariant measure (i.e.\ the measure having the canonical density as Radon--Nikod\'ym derivative with respect to the Lebesgue measure).

Let $\mucan$ be the invariant measure of $P^t$. Note that for arbitrary \mbox{$1\leq k\leq\infty$}, $\op{t}$ can be defined on $\Lw{k}{\mucan}{\Omega}$ due to the following corollary:

\begin{corollary}[{\cite[Corollary to Lemma 1]{BaRo95}}]\label{cor:opcontraction}
Let $P^t$ be a transfer operator associated with a transition function having the invariant measure $\mucan$. Then $P^t$ is a well-defined contraction on $\mathcal{L}^k_\mucan(\Omega)$ for every $1\leq k\leq\infty$.
\end{corollary}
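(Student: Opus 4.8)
The plan is to establish the bound at the two extreme exponents $k=1$ and $k=\infty$ directly, and then obtain the whole intermediate range $1<k<\infty$ by Riesz--Thorin interpolation. Well-definedness is essentially free: since $\mucan$ is a probability measure, $\Lw{k}{\mucan}{\Omega}\subset\Lw{1}{\mucan}{\Omega}$ for every $k\ge1$, so $\op{t}f$ is already defined as an element of $\Lw{1}{\mucan}{\Omega}$ by the construction in Section~\ref{ssec:transferoperator}; the only thing to prove is the norm bound $\|\op{t}f\|_{k,\mucan}\le\|f\|_{k,\mucan}$, which in particular shows a posteriori that $\op{t}f$ lies in $\Lw{k}{\mucan}{\Omega}$.

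The two structural facts I would extract from the defining relation $\int_B\op{t}f\,d\mucan=\int_\Omega f\,p(t,\cdot,B)\,d\mucan$ are: (a) positivity, i.e.\ $f\ge0$ $\mucan$-a.e.\ implies $\op{t}f\ge0$ $\mucan$-a.e., because the right-hand side is nonnegative for every $B\in\mathcal{B}$; and (b) $\op{t}\mathbf 1=\mathbf 1$, obtained by noting that the constant $\mathbf 1$ is the density of $\mucan$ with respect to itself and using invariance, $\int_B\op{t}\mathbf 1\,d\mucan=\int_\Omega p(t,\cdot,B)\,d\mucan=\mucan(B)$. Taking $B=\Omega$ in the same computation, but now using only $p(t,x,\Omega)=1$ (stochasticity, not invariance), gives $\int_\Omega\op{t}f\,d\mucan=\int_\Omega f\,d\mucan$. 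From (a) and this last identity the $\Lw{1}{\mucan}{\Omega}$-contraction is immediate: writing $f=f^+-f^-$ one gets $\|\op{t}f\|_{1,\mucan}\le\|\op{t}f^+\|_{1,\mucan}+\|\op{t}f^-\|_{1,\mucan}=\int_\Omega f^+\,d\mucan+\int_\Omega f^-\,d\mucan=\|f\|_{1,\mucan}$. For $k=\infty$, set $c:=\|f\|_{\infty,\mucan}$; then $c\mathbf 1\pm f\ge0$ $\mucan$-a.e., so by (a) and (b), $c\mathbf 1\pm\op{t}f=\op{t}(c\mathbf 1\pm f)\ge0$ $\mucan$-a.e., whence $|\op{t}f|\le c$ $\mucan$-a.e.\ and $\|\op{t}f\|_{\infty,\mucan}\le\|f\|_{\infty,\mucan}$.

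Finally, $(\Omega,\mathcal{B},\mucan)$ being a finite (hence $\sigma$-finite) measure space, Riesz--Thorin interpolation with endpoints $k=1$ and $k=\infty$ yields boundedness of $\op{t}$ on $\Lw{k}{\mucan}{\Omega}$ for every $1\le k\le\infty$ with norm at most $1^{1-\theta}\cdot 1^{\theta}=1$, where $1/k=1-\theta$. I do not expect a genuine obstacle here: the content lies entirely in the bookkeeping of the two $\mucan$-a.e.\ identities, and the only place invariance of $\mucan$ (rather than mere stochasticity of the transition function) is used is the identity $\op{t}\mathbf 1=\mathbf 1$, which underpins the $\Lw{\infty}{\mucan}{\Omega}$ bound and, through interpolation, everything above $k=1$. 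An equivalent route avoiding interpolation is to identify $\op{t}$ as the $\Lw{2}{\mucan}{\Omega}$-adjoint of the Koopman operator $U^tg(x)=\int_\Omega g(y)\,p(t,x,dy)$, use a pointwise Jensen inequality together with invariance to get $\|U^tg\|_{k,\mucan}\le\|g\|_{k,\mucan}$ for all $1\le k\le\infty$, and then dualize.
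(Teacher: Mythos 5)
The paper offers no proof of this corollary at all: it is imported verbatim from Baxter and Rosenthal (their Corollary to Lemma~1), so there is nothing internal to compare your argument against. On its own merits your proof is correct and is the standard argument: positivity of $\op{t}$ plus the mass identity $\int_\Omega \op{t}f\,d\mucan=\int_\Omega f\,d\mucan$ gives the $\Lw{1}{\mucan}{\Omega}$ bound, $\op{t}\mathbf 1=\mathbf 1$ (which is where invariance enters) gives the $\Lw{\infty}{\mucan}{\Omega}$ bound, and Riesz--Thorin fills in $1<k<\infty$; since $\op{t}$ is positivity-preserving, the complexification needed for Riesz--Thorin does not inflate the norm, so the constant really is $1$. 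The cited source argues instead via Jensen's inequality for the Koopman operator $U^tg=\int g(y)\,p(t,\cdot,dy)$ followed by duality, which you correctly identify as the interpolation-free alternative; that route gives all $k$ in one stroke but requires setting up the adjoint pair, whereas yours stays entirely on the transfer-operator side. One small correction to your closing remark: invariance of $\mucan$ is used not only for $\op{t}\mathbf 1=\mathbf 1$ but also for well-definedness itself, since the absolute continuity of $B\mapsto\int_\Omega f\,p(t,\cdot,B)\,d\mucan$ with respect to $\mucan$ (the Radon--Nikod\'ym step you delegate to Section~2.2) follows from $\mucan(B)=0\Rightarrow p(t,x,B)=0$ for $\mucan$-a.e.\ $x$, which is exactly invariance. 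This does not affect the validity of the proof, only the bookkeeping of where the hypothesis is consumed.
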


For our purposes the main connection between a semigroup of operators and their generator is given by the following
\begin{theorem}[Spectral mapping theorem~\cite{Paz83}]\label{spectralmappingtheorem} Let $\mathcal{X}$ be a Banach space, $T^t:\mathcal{X}\to\mathcal{X}$, $t\ge0$, a $C_0$ semigroup of bounded linear operators (i.e.\ $T^tf\to f$ as $t\to0$ for every $f\in\mathcal{X}$, and~$T^t$ bounded for every~$t$), and let $A$ be its infinitesimal generator. Then
$$
e^{t\sigma_p(A)}\subset \sigma_p(T^t)\subset e^{t\sigma_p(A)}\cup \{0\},
$$
with $\sigma_p$ denoting the point spectrum. The corresponding eigenvectors are identical.
\end{theorem}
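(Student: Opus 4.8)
My plan is to prove the two set inclusions separately, since the forward and backward directions genuinely differ in difficulty.

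\emph{The inclusion $e^{t\sigma_p(A)}\subset\sigma_p(T^t)$ (easy).} Let $\lambda\in\sigma_p(A)$ and $f\neq0$ with $Af=\lambda f$. For $f\in\Dom{A}$ the orbit $u(s):=T^sf$ is continuously differentiable with $u'(s)=AT^sf=T^sAf=\lambda u(s)$ (a standard fact of semigroup theory), so $\tfrac{d}{ds}\bigl(e^{-\lambda s}u(s)\bigr)=0$ and hence $u(s)=e^{\lambda s}f$ for all $s\ge0$. Taking $s=t$ gives $T^tf=e^{\lambda t}f$, so $e^{\lambda t}\in\sigma_p(T^t)$ with the same eigenvector $f$.

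\emph{The inclusion $\sigma_p(T^t)\subset e^{t\sigma_p(A)}\cup\{0\}$ (the substantive part).} Let $\mu\in\sigma_p(T^t)$ with $\mu\neq0$, and pick $g\neq0$ with $T^tg=\mu g$. Since $\mu\neq0$ we may write $\mu=e^{\lambda t}$; replacing the family $T^s$ by $e^{-\lambda s}T^s$ (again a $C_0$ semigroup, with generator $A-\lambda I$ on $\Dom{A}$) reduces us to the case $\mu=1$, i.e.\ $T^tg=g$, so the orbit $u(s)=T^sg$ is strongly continuous and $t$-periodic. I would then extract an eigenvector of $A$ from the Fourier coefficients $c_k:=\tfrac1t\int_0^t e^{-2\pi iks/t}T^sg\,ds$ (Bochner integrals). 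Two observations do the job. First, not all $c_k$ vanish: otherwise $\phi\bigl(u(\cdot)\bigr)$ would have all Fourier coefficients zero for every continuous linear functional $\phi$, forcing $u\equiv0$ by continuity and hence $g=u(0)=0$ by Hahn--Banach, a contradiction. Second, fixing $k$ with $c_k\neq0$, a shift of the integration variable together with the $t$-periodicity of $s\mapsto e^{-2\pi iks/t}T^sg$ (which uses $T^tg=g$ and $e^{-2\pi ik}=1$) gives $T^hc_k=e^{2\pi ikh/t}c_k$ for all $h\ge0$; differentiating at $h=0$ shows $c_k\in\Dom{A}$ with $Ac_k=\tfrac{2\pi ik}{t}c_k$. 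Undoing the shift, $\lambda+\tfrac{2\pi ik}{t}\in\sigma_p(A)$ and $e^{(\lambda+2\pi ik/t)t}=\mu$, so $\mu\in e^{t\sigma_p(A)}$.

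\emph{Obstacle and the eigenvector claim.} The whole difficulty sits in this second inclusion, and more precisely in the two observations above: the first needs vector-valued Fourier uniqueness, so it must be run through the dual space rather than done scalar-wise; the second requires genuine care that $s\mapsto e^{-2\pi iks/t}T^sg$ is $t$-periodic, as it is the periodicity that makes the translated integral close up and produces the clean relation $T^hc_k=e^{2\pi ikh/t}c_k$, hence membership in $\Dom{A}$. Finally, the statement that the eigenvectors are identical follows from the two constructions: the first inclusion sends an eigenvector of $A$ to an eigenvector of $T^t$ with the exponentiated eigenvalue, while the second shows every eigenvector of $T^t$ for $\mu=e^{\lambda t}$ lies in the closed span of the eigenvectors $c_k$ of $A$ for the frequencies $\lambda+\tfrac{2\pi ik}{t}$ (e.g.\ through Ces\`aro means of its Fourier series). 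In the setting used in this paper---transfer operators whose dominant eigenvalues are real and close to $1$---only the branch $k=0$ is relevant, and the eigenspaces of $A$ and $T^t$ then coincide verbatim.
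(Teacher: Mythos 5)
Your argument is correct and is essentially the classical proof from Pazy's book, which is exactly the reference the paper cites for this theorem without reproducing a proof of its own. Your caveat on the eigenvector correspondence---that the eigenspace of $T^t$ for $e^{\lambda t}$ is in general only the closed span of the eigenspaces of $A$ for the frequencies $\lambda + \tfrac{2\pi i k}{t}$, collapsing to a verbatim identification in the real-dominant-spectrum setting used here---is in fact more precise than the paper's bare statement that ``the corresponding eigenvectors are identical.''
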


We can immediately deduce the following statements.

\begin{corollary}	\label{cor:invdens}
A function $f$ is an invariant density of $\op{t}$ for all $t\geq0$, if and only if $\gen f=0$.
\end{corollary}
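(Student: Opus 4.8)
The plan is to prove the two implications separately; the forward direction is a one-line consequence of the definition~\eqref{generator}, while the reverse direction rests on the standard calculus of $C_0$ semigroups or, equivalently, on the spectral mapping theorem just stated.

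For the ``only if'' direction, suppose $f$ satisfies $\op{t}f = f$ for every $t \ge 0$. Then for each $\tau > 0$ the difference quotient appearing in~\eqref{generator} is $\tfrac{1}{\tau}\big(\op{\tau}f - f\big) = 0$, so the limit as $\tau \to 0$ trivially exists and equals $0$; hence $f \in \Dom{\gen}$ and $\gen f = 0$. Nothing more is needed here.

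For the ``if'' direction, suppose $f \in \Dom{\gen}$ with $\gen f = 0$. I would invoke that $\op{t}$ is a $C_0$ semigroup on $\Lw{k}{\mucan}{\Omega}$ --- strong continuity is property~(i), boundedness (in fact contractivity) is Corollary~\ref{cor:opcontraction}, and the semigroup law is~(ii) --- so the standard theory yields that $\op{t}$ maps $\Dom{\gen}$ into itself, commutes with $\gen$ there, and that $t \mapsto \op{t}f$ is differentiable with $\tfrac{d}{dt}\op{t}f = \gen\op{t}f = \op{t}\gen f$. Since $\gen f = 0$ this derivative vanishes identically, so $t \mapsto \op{t}f$ is constant and equals its value $\op{0}f = f$ at $t=0$. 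Alternatively, and more in the spirit of the surrounding text, one argues directly from Theorem~\ref{spectralmappingtheorem}: $\gen f = 0$ says $f$ is an eigenvector of $\gen$ for the eigenvalue $0 \in \sigma_p(\gen)$, whence by the inclusion $e^{t\sigma_p(\gen)} \subset \sigma_p(\op{t})$ together with the identity of eigenvectors, $f$ is an eigenvector of $\op{t}$ for the eigenvalue $e^{t\cdot 0} = 1$, i.e.\ $\op{t}f = f$ for all $t \ge 0$.

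The only genuinely delicate point is the assertion that $\op{t}$ really is a $C_0$ semigroup on the space one works in: on $\Lw{\infty}{\mucan}{\Omega}$ strong continuity can fail, so for the reverse direction to be literally correct one should restrict to $1 \le k < \infty$ (or pass to a suitable closed invariant subspace), which causes no difficulty for the spaces relevant in the sequel. Everything else is bookkeeping with the definition of $\gen$ and the elementary properties of $\op{t}$ collected above.
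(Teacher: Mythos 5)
Your proof is correct and, in its second variant for the ``if'' direction, is exactly the paper's (unwritten) argument: the corollary is stated as an immediate consequence of the spectral mapping theorem, matching the eigenvalue $0$ of $\gen$ with the eigenvalue $e^{t\cdot 0}=1$ of $\op{t}$ and using that the corresponding eigenvectors coincide. Your additional material --- the difference-quotient argument for the ``only if'' direction and the caveat about strong continuity on $\Lw{\infty}{\mucan}{\Omega}$ --- is sound and, if anything, more careful than the paper, which offers no explicit proof.
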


\begin{corollary}
Since $\op{t}$ is a contraction in $\Lw{k}{\mucan}{\Omega}$, the eigenvalues of $\gen$ lie in the left complex half-plane.
\end{corollary}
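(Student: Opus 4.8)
The plan is to deduce this directly from the spectral mapping theorem (Theorem~\ref{spectralmappingtheorem}) together with the fact, recorded in Corollary~\ref{cor:opcontraction}, that $\op{t}$ is a contraction on $\Lw{k}{\mucan}{\Omega}$ for every $1\le k\le\infty$.

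First I would note the elementary fact that any eigenvalue $\nu$ of a bounded operator $T$ with $\|T\|\le 1$ satisfies $|\nu|\le 1$: if $Tg=\nu g$ with $g\neq 0$, then $|\nu|\,\|g\| = \|Tg\| \le \|T\|\,\|g\| \le \|g\|$. Applying this with $T=\op{t}$ shows that $\sigma_p(\op{t})$ is contained in the closed unit disk for every $t\ge 0$.

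Next, take $\lambda\in\sigma_p(\gen)$. Since $\op{t}$ is a $C_0$ semigroup --- strong continuity at $t=0$ is property~(i) listed above, and strong continuity for all $t\ge 0$ then follows from the semigroup property~(ii) together with the uniform bound $\|\op{t}\|\le 1$ --- Theorem~\ref{spectralmappingtheorem} yields $e^{t\lambda}\in\sigma_p(\op{t})$ for every $t\ge 0$. Combining this with the previous step gives $|e^{t\lambda}|\le 1$, i.e.\ $e^{t\,\mathrm{Re}\,\lambda}\le 1$, for all $t\ge 0$; equivalently $t\,\mathrm{Re}\,\lambda\le 0$ for all $t>0$, which forces $\mathrm{Re}\,\lambda\le 0$. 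Hence every eigenvalue of $\gen$ lies in the closed left half-plane.

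There is essentially no obstacle here; the statement is a one-line consequence of the two cited results. The only points meriting a remark are (a) that one should check the hypotheses of the cited spectral mapping theorem, which reduces to verifying the $C_0$ property as above, and (b) that this argument controls only the \emph{point} spectrum --- which is exactly what is needed for the eigenfunction-based identification of metastable sets used later; bounding the full spectrum would instead call for a Hille--Yosida type resolvent estimate, which is not required here.
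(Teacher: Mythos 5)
Your argument is correct and is precisely the reasoning the paper intends: the corollary is stated as an immediate consequence of Theorem~\ref{spectralmappingtheorem} together with the contraction property from Corollary~\ref{cor:opcontraction}, with no further proof given. Your added care about verifying the $C_0$ hypothesis and noting that only the point spectrum is controlled is a sensible refinement, but the route is the same.
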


Theorem \ref{spectralmappingtheorem} suggests that $\op{t}=e^{t\gen}=\sum_{k=0}^\infty \frac{t^k}{k!}\gen^{k}$. This intuition is false in general, as $\gen$ may be unbounded and $\bigcap_{k=1}^{\infty}\Dom{\gen^k} \neq \Lw{p}{\mucan}{\Omega}$ for any~$p$. However, $\op{t}$ can be approximated by a truncated ``Taylor series'', at least pointwise, in the function space
\begin{equation}\label{Vspace}
\displaystyle \V{N}{\Omega}:=\big\{f\in\C{2N}{\Omega}~\big\vert~\gen^{n}f\in \Lw{2}{\mucan}{\Omega}~\forall n=0,\ldots,N\big\}.
\end{equation}
We require~$f$ and~$V$ to be $2N$-times differentiable, as this is the highest derivative occuring in~$\gen^{N}$, cf.~(\ref{fokkerplanckequationito}).

The following convergence result also holds true if choosing $\Lw{k}{\mucan}{\Omega}$ instead of $\Lw{2}{\mucan}{\Omega}$ in the definition of $\V{N}{\Omega}$, and correspondingly regarding the norm $\|\cdot\|_{k,\mu_\Omega}$. However, we state it for $\Lw{2}{\mucan}{\Omega}$, as this is the space we are ultimately operating in.

\begin{proposition}\label{truncatedtaylor}
Let $f\in \V{N+1}{\Omega}$. Then
$$
\Big\|\op{t}f-\sum_{n=0}^N\frac{t^n}{n!}\gen^{n}f\Big\|_{2,\mucan} = \mathcal{O}(t^{N+1}) ~~ \text{for $t\rightarrow 0$}.
$$
\end{proposition}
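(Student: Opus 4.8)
The plan is to use the standard Taylor-with-integral-remainder argument for semigroups, applied to the orbit map $t\mapsto\op{t}f$, and then to control the remainder using the contraction property of $\op{t}$ on $\Lw{2}{\mucan}{\Omega}$ (Corollary~\ref{cor:opcontraction}) together with the regularity encoded in the definition of $\V{N+1}{\Omega}$. First I would observe that for $f\in\V{N+1}{\Omega}$ the function $g(t):=\op{t}f$ is $(N+1)$-times differentiable as a map $[0,\infty)\to\Lw{2}{\mucan}{\Omega}$, with $g^{(n)}(t)=\op{t}\gen^{n}f$ for $n=0,\dots,N+1$. This is where the definition of $\V{N+1}{\Omega}$ does its work: membership guarantees $\gen^{n}f\in\Dom{\gen}\cap\Lw{2}{\mucan}{\Omega}$ for $n\le N$, so one may differentiate under $\op{t}$ repeatedly, using the elementary semigroup identity $\frac{d}{dt}\op{t}h=\op{t}\gen h=\gen\op{t}h$ for $h\in\Dom{\gen}$. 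One should be a little careful that $\gen$ here denotes the Fokker--Planck operator in the strong (differential-operator) sense of \eqref{fokkerplanckequationito}, and that on the smooth class $\C{2(N+1)}{\Omega}$ with the stated integrability it agrees with the generator of the $\Lw{2}{\mucan}{\Omega}$-semigroup; I would invoke this identification (or simply restrict attention to the core on which it holds).

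Next I would write Taylor's theorem with integral remainder for the Banach-space-valued function $g$:
\begin{equation*}
\op{t}f=\sum_{n=0}^{N}\frac{t^{n}}{n!}\gen^{n}f+\frac{1}{N!}\int_{0}^{t}(t-s)^{N}\,\op{s}\gen^{N+1}f\,ds,
\end{equation*}
which is valid because $g\in C^{N+1}\big([0,t];\Lw{2}{\mucan}{\Omega}\big)$. Then I would estimate the remainder in the $\|\cdot\|_{2,\mucan}$-norm: pulling the norm inside the Bochner integral and using that $\op{s}$ is a contraction on $\Lw{2}{\mucan}{\Omega}$ for every $s\ge0$,
\begin{equation*}
\Big\|\op{t}f-\sum_{n=0}^{N}\frac{t^{n}}{n!}\gen^{n}f\Big\|_{2,\mucan}\le\frac{1}{N!}\int_{0}^{t}(t-s)^{N}\,\big\|\gen^{N+1}f\big\|_{2,\mucan}\,ds=\frac{t^{N+1}}{(N+1)!}\,\big\|\gen^{N+1}f\big\|_{2,\mucan},
\end{equation*}
and since $\gen^{N+1}f\in\Lw{2}{\mucan}{\Omega}$ by the definition of $\V{N+1}{\Omega}$, the right-hand side is $\mathcal{O}(t^{N+1})$ with an explicit constant. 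The same computation goes through verbatim with $k$ in place of $2$, which is why the proposition holds in every $\Lw{k}{\mucan}{\Omega}$ as remarked.

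The main obstacle is the first step: justifying that $t\mapsto\op{t}f$ is genuinely $(N+1)$-times norm-differentiable with derivatives $\op{t}\gen^{n}f$. The subtlety is that Proposition~\ref{truncatedtaylor} treats $\gen$ as the explicit differential operator of \eqref{fokkerplanckequationito}, whereas the clean differentiation rule $\frac{d}{dt}\op{t}h=\op{t}\gen h$ is a statement about the \emph{abstract} generator and its domain; one must know that $f$ and its iterates $\gen^{n}f$ ($n\le N$) lie in the domain of the abstract generator and that the abstract generator acts there as the stated differential operator. For the Langevin and Smoluchowski semigroups this is available from the literature on hypoelliptic / uniformly elliptic Fokker--Planck operators (the smooth, suitably decaying functions form a core), and I would cite that fact rather than reprove it; the rest of the argument is then the routine Taylor estimate above. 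A minor secondary point is measurability/continuity of $s\mapsto\op{s}\gen^{N+1}f$ in $\Lw{2}{\mucan}{\Omega}$ so that the Bochner integral makes sense, but this follows from strong continuity of the semigroup (the $C_0$ property) together with $\gen^{N+1}f\in\Lw{2}{\mucan}{\Omega}$.
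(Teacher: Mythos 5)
Your proof is correct and follows essentially the same route as the paper's: Taylor's theorem with integral remainder for the Banach-space-valued orbit map $t\mapsto\op{t}f$, commuting $\gen^{N+1}$ past $\op{s}$, and invoking the contraction property of $\op{s}$ on $\Lw{2}{\mucan}{\Omega}$ together with $\gen^{N+1}f\in\Lw{2}{\mucan}{\Omega}$ to bound the remainder. The only differences are cosmetic (you parametrize the remainder integral over $[0,t]$ rather than $[0,1]$, obtaining the slightly sharper constant $1/(N+1)!$, and you are somewhat more explicit than the paper about identifying the abstract generator with the Fokker--Planck differential operator).
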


\begin{proof}
Let $f\in\V{N+1}{\Omega}$. Then, $\op{t} f: t\mapsto \Lw{1}{\mucan}{\Omega}$ is $N+1$ times differentiable in~$t$ because $\partial^k\op{t}\big|_{t=0}f=\gen^{k}f,~k\leq N+1$ exist as per choice of $f$. The Taylor series expansion for Banach space valued linear operators can for example be found in \cite[Section 4.5]{Zei95}. Application to $\op{t}$ yields
\begin{align*}
\op{t}f=\sum_{n=0}^N\gen^{n}f\frac{t^n}{n!}+ \Big(\int_0^1\frac{1}{N!}(1-s)^N\partial_s^{N+1}\op{st}f~ds\Big)t^{N+1}.
\end{align*}
We estimate the remainder:
\begin{align*}
\Big\|\op{t}f-\sum_{n=0}^N\frac{t^n}{n!}\gen^{n}f\Big\|_{2,\mucan} &= \Big\|\int_0^1\frac{1}{N!}(1-s)^N\partial_s^{N+1}\op{st}f~ds\Big\|_{2,\mucan}t^{N+1}.
\intertext{As we are interested in the limit $t\rightarrow 0$ we can assume $t<1$. In that case, $st<s$, and therefore}
&\leq\frac{t^{N+1}}{N!}\sup_{s\in[0,1]}\big\|\partial_s^{N+1}\op{s}f\big\|_{2,\mucan}.
\intertext{$\op{t}f$ is the solution of the Fokker--Planck equation (\ref{fokkerplanckequationito}), and thus $\partial_s \op{s}f =\gen\op{s}f$ and, by extension, \mbox{$\partial_s^{N+1}\op{s}f=\gen^{N+1}\op{s}f$}. Moreover, due to Pazy \cite[Corollary 1.4]{Paz83}, the transfer operator and generator commute: \mbox{$\gen\op{s}f=\op{s}\gen f$}. Therefore,}
&=\frac{t^{N+1}}{N!}\sup_{s\in[0,1]}\big\|\op{s} \gen^{N+1} f\big\|_{2,\mucan} \\
&\leq \frac{t^{N+1}}{N!} \sup_{s\in[0,1]} \underbrace{\big\|\op{s}\big\|_{2,\mucan}}_{\leq 1} \big\|\gen^{N+1} f\big\|_{2,\mucan}.
\end{align*}
In the last line, $\big\|\op{s}\big\|_{2,\mucan}\leq 1$ because $P^t$ is a contraction (Corollary \ref{cor:opcontraction}). As $\gen^{N+1} f \in \Lw{2}{\mucan}{\Omega}$ by the choice of~$f$, it is bounded. This completes the proof. 
\end{proof}

\section{Spatial dynamics and metastability}

In order to analyze the behaviour of molecular systems in regard of configurational stability, we have to restrict our view to the dynamics on position space $\Qspace$. For this purpose, Sch\"utte in \cite{Sch99} proposed a reduction of the classical Hamiltonian dynamics, called Hamiltonian dynamics with randomized momenta, while Weber \cite{Web12} proposed the corresponding generalized version for a stochastic evolution. Following Sch\"utte and Weber, we formulate the extension to Langevin dynamics and state the appropriate definition of metastability.

\subsection{The spatial transfer operator}

Consider an infinitely large number of identical systems of form (\ref{langevindynamics}) in thermodynamic equilibrium, i.e. identically and independently distributed according to $\fcan$ (called an \emph{ensemble} in classical statistical mechanics literature). To determine which portion of these systems undergo a certain configurational change, i.e. leave a subset $A\subset\Qspace$, we have to track the evolution of all these systems starting from $A$. Due to the product structure (\ref{canonicaldensity}) of $\fcan$, their momenta are still distributed according to $f_\mathcal{P}$ and so the whole coordinates are initially distributed according to $\chi_A\fcan$, with $\chi_A$ the indicator function of $A$ on $\Qspace$. This phase space density now evolves under $\opl{t}$, but as we are only interested in the position portion of the evolving density, we form the marginal distribution with respect to $q$.  The resulting \emph{spatial transfer operator} on $\LfQ{k}\left(\Qspace\right)$ with $d\muQ:=\fQ dm$ is
\begin{equation} \label{spatialto}
S^t\chi_A(q):=\frac{1}{\fQ(q)}\int_\Pspace\opl{t}\big(\chi_A(q)\fcan(q,p)\big)~dp,
\end{equation}
cf.\ Corollary \ref{cor:opcontraction} applied to the operator $\opl{t}$ and the  invariant measure $\mucan$.

Intuitively, one can think of $S^tu$ with normalized $u\in\LfQ{k}\left(\Qspace\right)$ as transporting a positional portion of the canonical density.

\subsection{Metastability on position space}

\paragraph{Langevin dynamics with randomized momenta}
Considering $S^t$ as an operator on $\LfQ{2}(\Qspace)$ and using the standard associated scalar product $
\langle u,v\rangle_{2,\muQ}$ gives us access to certain transition probabilities on $\Qspace$, which fit our intuition of metastability. For $A\subset\Qspace$ we call
$$
\Gamma(A) :=\big\{(q,p)\in\Omega~|~q\in A\big\}
$$
the ''slice`` of phase space corresponding to $A$. It represents a sub-ensemble in position space associated to \emph{all} possible momenta. It is easy to see that the transition probabilities between slices $\Gamma(A)$ and $\Gamma(B)$ can be expressed in terms of $S^t$:
\begin{equation}
\label{slicetransitionprob}
 p\left(t,\Gamma(A),\Gamma(B)\right)=\frac{\langle S^t\chi_A,\chi_B\rangle_{2,\muQ}} {\langle\chi_A,\chi_A\rangle_{2,\muQ}},
\end{equation}
where~$p$ is the stochastic transition function under Langevin dynamics with respect to the Lebesgue measure. We now call a disjoint decomposition $A_1\cup\ldots\cup A_n=\Qspace$ of position space \emph{metastable} if
$$
p\left(t,\Gamma(A_j),\Gamma(A_j)\right)\approx 1,~j=1,\ldots,n.
$$
The meaning of ``$\approx 1$'' will become apparent later.

The connection between eigenvalues close to one of some transfer operator and metastable sets was first observed in~\cite{Del99} and applied in conformation dynamics in \cite{Deu96}.  An extension to a broader class of transfer operators (satisfying an assumption related to self-adjointness) was provided by Huisinga and Schmidt \cite{Hui05}. Our $S^t$ falls into that class, which is shown in Appendix \ref{sec:spatial_spectrum}.

\begin{theorem}[Application of {\cite[Theorem 2]{Hui05}}]\label{metastability}
Let $\sigma(S^t)\subset [a,1]$ with $a>-1$ and \\\mbox{$\lambda_n\leq\ldots\leq\lambda_2<\lambda_1=1$} be the $n$ largest eigenvalues of $S^t$, with eigenvectors $v_n,\ldots,v_1$.
Let $\{A_1,\ldots,A_n\}$ be a measurable decomposition of $\Qspace$ and $\Pi: \LfQ{2}(\Qspace)\rightarrow \LfQ{2}(\Qspace)$ be the orthogonal projection onto $\operatorname{span}(\chi_{A_1},\ldots,\chi_{A_n})$, i.e. 
$$\Pi v = \sum_{j=1}^n\frac{\langle v,\chi_{A_j}\rangle_{2,\muQ}}{\langle \chi_{A_j},\chi_{A_j}\rangle_{2,\muQ}}\chi_{A_j}~.$$

The metastability of the decomposition can then be bounded from above by
$$
p\left(t,\Gamma(A_1),\Gamma(A_1)\right)+\ldots + p\left(t,\Gamma(A_n),\Gamma(A_n)\right) \leq 1+\lambda_2+\ldots+\lambda_n,
$$
while it is bounded from below by
$$
1+\rho_2\lambda_2 +\ldots+\rho_n\lambda_n + c \leq p\left(t,\Gamma(A_1),\Gamma(A_1)\right)+\ldots p\left(t,\Gamma(A_n),\Gamma(A_n)\right)
$$
where $\rho_j=\| \Pi v_j\|_{2,\muQ}\in [0,1]$ and $c= a(1-\rho_2+\ldots+1-\rho_n)$.
\end{theorem}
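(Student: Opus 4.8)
The assertion is an instance of the abstract metastability estimate of Huisinga and Schmidt, \cite[Theorem~2]{Hui05}, whose hypotheses require the operator in question to be a self-adjoint Markov contraction on a weighted $L^2$ space with real spectrum bounded below by some $a>-1$. The plan is to verify that $S^t$ fits this template, to rewrite the quantities in the statement in the form \cite[Theorem~2]{Hui05} addresses, and then to quote that theorem. Three structural facts are needed. (i) $S^t$ is a well-defined contraction on $\LfQ{2}(\Qspace)$; this is Corollary~\ref{cor:opcontraction} applied to $\opl{t}$ with invariant measure $\mucan$, exactly as in the definition~\eqref{spatialto}. (ii) $S^t$ is self-adjoint on $\LfQ{2}(\Qspace)$ with respect to $\langle\cdot,\cdot\rangle_{2,\muQ}$ --- the reversibility of Langevin dynamics with randomized momenta --- which I would take from Appendix~\ref{sec:spatial_spectrum}; together with (i) this forces $\sigma(S^t)\subset[-1,1]\subset\mathbb{R}$, so that the genuine extra input in the statement is only the spectral-gap condition $a>-1$. (iii) $S^t$ fixes constants: since $\chi_\Qspace\equiv 1$ and $\fcan$ is $\opl{t}$-invariant, \eqref{spatialto} gives $S^t\chi_\Qspace(q)=\fQ(q)^{-1}\int_\Pspace\fcan(q,p)\,dp=1$, so $\lambda_1=1$ with eigenvector $v_1=\chi_\Qspace$, and it is simple with the remaining spectrum below it by ergodicity (again Appendix~\ref{sec:spatial_spectrum}).

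Next I would reduce the left-hand side of both inequalities to one spectral quantity. With $\widehat\chi_{A_j}:=\chi_{A_j}/\|\chi_{A_j}\|_{2,\muQ}$ and $\{A_1,\dots,A_n\}$ a partition of $\Qspace$, these functions are orthonormal and span $\operatorname{range}\Pi$, so \eqref{slicetransitionprob} gives
\[
\sum_{j=1}^n p\big(t,\Gamma(A_j),\Gamma(A_j)\big)=\sum_{j=1}^n\big\langle S^t\widehat\chi_{A_j},\widehat\chi_{A_j}\big\rangle_{2,\muQ}=\operatorname{tr}\big(\Pi S^t\Pi\big),
\]
which is precisely the object estimated in \cite[Theorem~2]{Hui05}. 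Moreover $\chi_\Qspace=\sum_j\|\chi_{A_j}\|_{2,\muQ}\,\widehat\chi_{A_j}\in\operatorname{range}\Pi$, whence $\rho_1=\|\Pi v_1\|_{2,\muQ}=\|\chi_\Qspace\|_{2,\muQ}=1$; this is why the summand $1$ leads both bounds.

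It then remains to invoke \cite[Theorem~2]{Hui05}. Expanding each $\widehat\chi_{A_j}$ in an eigenbasis of the self-adjoint $S^t$ and interchanging sums writes $\operatorname{tr}(\Pi S^t\Pi)$ as a weighted combination of eigenvalues; the upper bound is then the Ky Fan maximum principle, namely that the trace of $\Pi S^t\Pi$ over an $n$-dimensional subspace cannot exceed the sum $\lambda_1+\dots+\lambda_n=1+\lambda_2+\dots+\lambda_n$ of the $n$ largest eigenvalues of $S^t$. For the lower bound one keeps the contributions of the dominant pairs $v_1,\dots,v_n$ and estimates the remaining part of the spectrum, which lies in $[a,\lambda_n]$, from below using $a$; collecting terms and using $\rho_j\in[0,1]$ yields the stated bound with $c=a\sum_{j=2}^n(1-\rho_j)$. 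I would reproduce this bookkeeping only as far as needed to confirm it carries over verbatim with the operator taken to be $S^t$, the inputs being exactly self-adjointness, the contraction property, and $S^t\chi_\Qspace=\chi_\Qspace$.

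The main obstacle is not the spectral estimate but the verification of the hypotheses of \cite[Theorem~2]{Hui05} for $S^t$: primarily the self-adjointness (reversibility) of the momentum-averaged operator on $\LfQ{2}(\Qspace)$, together with the fact that its dominant spectrum consists of simple, isolated eigenvalues with genuine $\LfQ{2}(\Qspace)$-eigenfunctions $v_j$, so that $\rho_j=\|\Pi v_j\|_{2,\muQ}$ is well-defined. Both points are settled, independently of this theorem, in Appendix~\ref{sec:spatial_spectrum}, so the substantive work has effectively been done there and the proof of Theorem~\ref{metastability} proper is the routine assembly described above.
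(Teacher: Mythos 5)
Your proposal is correct and follows essentially the same route as the paper: the paper gives no standalone proof of Theorem~\ref{metastability} but treats it as a direct application of \cite[Theorem~2]{Hui05}, with the substantive work --- self-adjointness (reversibility) of $S^t$ on $\LfQ{2}(\Qspace)$ and the spectral properties (P1), (P2) via geometric ergodicity --- deferred to Appendix~\ref{sec:spatial_spectrum}, exactly as you describe. Your additional bookkeeping (the trace identity via \eqref{slicetransitionprob}, $S^t\chi_\Qspace=\chi_\Qspace$, and the sketch of the Ky Fan-type bound) is consistent with the cited result and adds detail the paper leaves implicit.
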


Thus, the lower the projection error of $\Pi v_j$, the better the lower bound matches the upper bound.
We choose $A_1,\ldots,A_n$ in accordance to the the sign structure of $v_1,\ldots,v_n$ as a heuristic to the 
optimal decomposition (i.e. we treat the eigenfunctions as one-step functions of the form $\chi_A-\chi_B$).

However, more sophisticated strategies for extracting metastable sets are available, most notably the linear optimization-based PCCA-algorithm and its extensions, developed by Deuflhard et. al. (\cite{Deu00}, \cite{Roe13}).
Let it be noted that it is applicable to all the operators developed herein, as PCCA does not depend on the underlying dynamical model.

\paragraph{Smoluchowski dynamics}

As described in section \ref{stochasticdynamics}, another way to restrict the molecular dynamics to position space is via the high-friction limit and the arising transition from Langevin to Smoluchowski dynamics. As this limit may represent a considerable deviation from physical reality, it is initially unclear how metastability in system \eqref{smoluchowskidynamics} can be interpreted in the context of the original system. As the transition also involves a rescaling of time, especially the transition probabilities have to be treated with caution.

Nevertheless, metastability under Smoluchowski dynamics can formally be defined as above, and it holds
$$
 p\left(t,A,B\right)=\frac{\langle \ops{t}\chi_A,\chi_B\rangle_{2,\muQ}} {\langle\chi_A,\chi_A\rangle_{2,\muQ}}.
$$
Here, $p$ is the stochastic transition function with respect to Smoluchowski dynamics. Using the same reasoning as in the previous paragraph, we seek eigenpairs~$(\lambda,u)$ of~$\ops{t}$ with~$\lambda\approx 1$.

However, in the case of Smoluchowski dynamics, these are somewhat more accessible. Due to the Spectral Mapping Theorem \ref{spectralmappingtheorem}, eigenvalues of $\ops{t}$ near $1$ coincide with eigenvalues of the infinitesimal generator $\gens$ near $0$, and the associated eigenvectors are identical. An efficient method for metastability analysis based on $\gens$ was developed in \cite {Fro13}. There it is shown that for an eigenvalue $\lambda <0$ of $\gens$, the corresponding eigenvector $u$ and the sets $A^+ =\{u> 0\},~A^-=\{u<0\}$ holds
$$
p(t,A^+,A^+) + p(t,A^-,A^-) = 1+ \operatorname{exp}(t\lambda) +\mathcal{O}(t).
$$


Unfortunately, $S^t$ lacks the semi-group property, and so cannot be the solution operator of an autonomous PDE, such as the Fokker--Planck equation. Equivalently, spatial dynamics is not induced by an It\^o diffusion process, and thus has no infinitesimal generator in the sense of (\ref{fokkerplanckequationsmoluchowski}).

\section{The generating structure of spatial transfer operators}

Formally, the time-derivatives of $S^t$ can still be defined, in analogy to (\ref{generator}). We will see in the following how the resulting operators can play the role of the infinitesimal generator in the context of metastability analysis.
 
\subsection{Pseudo generators}

We first define these time derivatives for general time-parameterized operators:

\begin{definition}\label{pseudogenerator}
Let $\mathcal{X}$ be a Banach space, $T^t:\mathcal{X}\rightarrow \mathcal{X},~ t>0$ be a time-parametrized family of bounded linear operators. 

Define the operator $\partial_tT^t:\Dom{\partial_tT^t}\rightarrow\mathcal{X}$ by
$$
\partial_tT^tf = \underset{h\rightarrow 0}{\lim}\frac{T^{t+h}f-T^tf}{h}
$$
and call it the \emph{time-derivative} of $T^t$. $\Dom{\partial_tT^t}$ here is the subspace of $\mathcal{X}$ where the above limit exists.
Iteratively, we define by
$
\partial_t^{n}T^t:=\partial_t\big(\partial_t^{n-1}T^t\big)
$
the $n$-th time-derivative on $\Dom{\partial_t^nT^t}$.
Finally,
$$
\pgen{n} := \partial_t^n T^t\big|_{t=0}
$$
is called the $n$-th \emph{pseudo generator} of $T^t$.
\end{definition}
For $T^t=\op{t}$, the transfer operator of an It\^o process, the pseudo generators are the iterated infinitesimal generators:
\begin{proposition}
On $\mathcal{D}(L^n)$, the $n$-th pseudo generator $\pgen{n}$ of $\op{t}$ takes the form
$$
\pgen{n}=\gen^{n},
$$
with $\gen$ the infinitesimal generator of the respective dynamics.
\end{proposition}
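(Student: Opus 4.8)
The plan is to combine the semigroup property of $\op{t}$ with the standard differentiation rules for $C_0$ semigroups and then to induct on $n$. The case $n=1$ is immediate from the definitions: for $f\in\Dom{\gen}$ and any $t\ge0$, the semigroup law $\op{t+h}=\op{h}\op{t}$ together with the invariance $\op{t}f\in\Dom{\gen}$ gives $\partial_t\op{t}f=\lim_{h\to0}h^{-1}\big(\op{h}\op{t}f-\op{t}f\big)=\gen\op{t}f$ for all $t\ge0$, and at $t=0$ this is precisely~\eqref{generator}, i.e.\ $\pgen{1}f=\gen f$. Before the inductive step I would record the two semigroup facts that carry the argument, both obtained by iterating \cite[Corollary~1.4]{Paz83} and both already used (for the exponents appearing there) in the proof of Proposition~\ref{truncatedtaylor}: (a) $\op{t}$ leaves $\Dom{\gen^{k}}$ invariant, and (b) $\gen^{k}\op{t}=\op{t}\gen^{k}$ on $\Dom{\gen^{k}}$, for every $k\ge1$ and $t\ge0$.

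For the inductive step, assume that for every $g\in\Dom{\gen^{n-1}}$ and every $t\ge0$ the $(n-1)$-st iterated time derivative of Definition~\ref{pseudogenerator} exists and satisfies $\partial_t^{n-1}\op{t}g=\gen^{n-1}\op{t}g$. Let $f\in\Dom{\gen^{n}}\subset\Dom{\gen^{n-1}}$. Using the induction hypothesis, the semigroup law, and fact~(b),
$$
\frac{\partial_t^{n-1}\op{t+h}f-\partial_t^{n-1}\op{t}f}{h}=\frac{\gen^{n-1}\op{t+h}f-\gen^{n-1}\op{t}f}{h}=\frac{\op{h}\big(\gen^{n-1}\op{t}f\big)-\gen^{n-1}\op{t}f}{h}.
$$
Since $\gen^{n-1}f\in\Dom{\gen}$ and $\op{t}$ preserves $\Dom{\gen}$ (fact~(a)), the vector $\gen^{n-1}\op{t}f=\op{t}\gen^{n-1}f$ (fact~(b)) belongs to $\Dom{\gen}$, so the right-hand side converges as $h\to0$ to $\gen\big(\gen^{n-1}\op{t}f\big)=\gen^{n}\op{t}f$. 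Hence $\partial_t^{n}\op{t}f=\gen^{n}\op{t}f$ for all $t\ge0$, and evaluating at $t=0$ gives $\pgen{n}f=\gen^{n}f$, which closes the induction.

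The only real obstacle is the bookkeeping of domains: at each differentiation one must check that the vector being differentiated genuinely lies in $\Dom{\gen}$ so that the difference quotient converges, and that the iterated operator $\partial_t^{n-1}\op{t}$ of Definition~\ref{pseudogenerator}, when restricted to $\Dom{\gen^{n}}$, really coincides with the operator-valued map $t\mapsto\gen^{n-1}\op{t}$ rather than with something defined on a smaller set; both points are settled by the invariance property~(a), and no estimate beyond the definition of $\gen$ and the strong continuity of $\op{t}$ is needed. In the write-up I would emphasise that the assertion is an equality of operators \emph{on $\Dom{\gen^{n}}$ only} --- we make no claim about the precise domain of $\pgen{n}$ --- and that the identities invoked are exactly those already established while proving Proposition~\ref{truncatedtaylor}.
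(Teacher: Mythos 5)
Your argument is correct, but note that the paper offers \emph{no} proof of this proposition at all: it is stated as a known consequence of standard semigroup theory and the text moves directly on to Lemma~\ref{altpseudogen}. Your induction therefore fills a gap rather than duplicating anything. The domain bookkeeping is handled properly: the two facts you isolate --- invariance of $\Dom{\gen^{k}}$ under $\op{t}$ and the commutation $\gen^{k}\op{t}=\op{t}\gen^{k}$ on $\Dom{\gen^{k}}$ --- are exactly the iterated form of \cite[Corollary~1.4]{Paz83} that the paper itself invokes in the proof of Proposition~\ref{truncatedtaylor}, and they are what justifies passing to the limit in the difference quotient $h^{-1}\big(\op{h}g-g\big)$ with $g=\op{t}\gen^{n-1}f\in\Dom{\gen}$. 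The closest analogue in the paper is the proof of Lemma~\ref{altpseudogen}, which establishes the corresponding statement for $S^t$ by invoking the Fokker--Planck identity $\partial_t\op{t}=\gen\op{t}$ outright and iterating it formally under the momentum integral; your route derives that identity from the semigroup law and the definition of $\gen$, which is more careful about \emph{why} each successive derivative exists on $\Dom{\gen^{n}}$. Two cosmetic caveats you may wish to flag in a write-up: Definition~\ref{pseudogenerator} takes a two-sided limit $h\to0$ while at $t=0$ only the one-sided quotient is meaningful (the family is defined for $t>0$), and the evaluation $\pgen{n}f=\gen^{n}f$ at $t=0$ uses $\op{0}=\operatorname{id}$, the same convention used in the proof of Lemma~\ref{altpseudogen}. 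Neither affects the validity of the argument.
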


With this, the pseudo generators of the spatial transfer operator $S^t$ can be expressed by the generator $\genl$ of the full Langevin transfer operator:

\begin{lemma}\label{altpseudogen}
On $\mathcal{D}(\genl^n)$, the $n$-th pseudo generator $\pgen{n}$ of $S^t$ takes the form
$$\pgen{n}u(q)=\frac{1}{\fQ(q)}\int_\Pspace\genl^n\big(u(q)\fcan(q,p)\big)~dp.$$
\end{lemma}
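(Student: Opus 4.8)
The plan is to realise $S^t$ as a conjugate of the Langevin transfer operator $\opl{t}$ by two \emph{time-independent} linear operators, so that the time derivatives of Definition~\ref{pseudogenerator} can be pushed through them. Concretely, introduce the \emph{lift} $\iota u:=u\,\fcan$ (multiplication of $u=u(q)$ by the canonical density) and the \emph{momentum marginalisation} $(\pi g)(q):=\frac{1}{\fQ(q)}\int_\Pspace g(q,p)\,dp$. Then~\eqref{spatialto} reads precisely $S^t=\pi\,\opl{t}\,\iota$, and $\pi\iota=\mathrm{Id}$ by the product structure~\eqref{canonicaldensity} (in particular $S^0=\mathrm{Id}$). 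The property I would establish first is that $\pi$ is a \emph{contraction} $\Lw{k}{\mucan}{\Omega}\to\LfQ{k}(\Qspace)$: writing $\fcan=\fQ\,f_\Pspace$ exhibits $\pi g$ as the average of $g/\fcan$ against the probability density $f_\Pspace\,dp$, and Jensen's inequality then gives $\|\pi g\|_{k,\muQ}\le\|g\|_{k,\mucan}$. In particular $\pi$ is continuous, hence interchangeable with limits in $\Lw{k}{\mucan}{\Omega}$.

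With this in hand, for any $u$ whose lift satisfies $\iota u\in\mathcal{D}(\genl^n)$ the difference quotients defining the time-derivatives of $S^t$ factor as $\tfrac{1}{h}\big(S^{t+h}u-S^tu\big)=\pi\big(\tfrac{1}{h}(\opl{t+h}\iota u-\opl{t}\iota u)\big)$; letting $h\to0$ and using continuity of $\pi$ repeatedly yields $\partial_t^n S^tu=\pi\,(\partial_t^n\opl{t})\,\iota u$ and, at $t=0$, $\pgen{n}u=\pi\,(\partial_t^n\opl{t}\vert_{t=0})\,\iota u$. Applying the preceding proposition to the Langevin transfer operator gives $\partial_t^n\opl{t}\vert_{t=0}=\genl^n$ on $\mathcal{D}(\genl^n)$; substituting and unwinding the definitions of $\pi$ and $\iota$ produces
\[
\pgen{n}u(q)=\big(\pi\,\genl^n\,\iota u\big)(q)=\frac{1}{\fQ(q)}\int_\Pspace\genl^n\big(u(q)\fcan(q,p)\big)\,dp,
\]
which is the claim.

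The point I expect to be delicate is the \emph{domain bookkeeping} behind the hypothesis ``$u\in\mathcal{D}(\genl^n)$''. Since $\genl$ acts on functions of $(q,p)$, this must be read as $\iota u\in\mathcal{D}(\genl^n)$, and one has to check that multiplication by $\fcan$ really maps the relevant $q$-domain into $\mathcal{D}(\genl^n)$ and that the intermediate objects $\genl^j\iota u$ stay integrable in $p$, so that $\pi$ is applicable after each differentiation. Here the explicit Gaussian shape of $\fcan$ in~\eqref{canonicaldensity} and the fact that $\genl$ only combines $\nabla_q,\nabla_p,\Delta_p$ with multiplication operators make this believable, but it is where the actual work sits. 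An equivalent, more hands-on route is induction on $n$: differentiate $\partial_t^{n-1}S^tu(q)=\frac{1}{\fQ(q)}\int_\Pspace\genl^{n-1}\opl{t}(\iota u)(q,p)\,dp$ once more in $t$, move $\partial_t$ under the $p$-integral by dominated convergence, and use $\partial_t\big(\genl^{n-1}\opl{t}\iota u\big)=\genl^{n}\opl{t}\iota u=\opl{t}\genl^{n}\iota u$ (the commutation $\genl\opl{t}=\opl{t}\genl$ being \cite[Corollary~1.4]{Paz83}, as in the proof of Proposition~\ref{truncatedtaylor}) together with $\|\opl{t}\|\le1$ to dominate uniformly for $t$ near $0$. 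I would expect this regularity/integrability verification, rather than the algebraic identity, to be the main obstacle.
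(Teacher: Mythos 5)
Your proof is correct and takes essentially the same route as the paper: both arguments reduce to interchanging the momentum integral with the time derivative and then applying the generator relation for $\opl{t}$ (the paper moves $\partial_t$ under the integral, uses $\partial_t\opl{t}=\genl\opl{t}$ from the Fokker--Planck equation, inducts, and evaluates at $t=0$ --- which is precisely your second, ``hands-on'' inductive route). The only substantive difference is that you supply an explicit justification for the limit/integral interchange via the contraction property of the marginalisation $\pi$ (Jensen's inequality), a step the paper performs without comment.
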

\begin{proof} The first time-derivative of $S^t$ is
\begin{align*}
\partial_tS^tu(q) &= \partial_t\Big(\frac{1}{\fQ(q)}\int_\Pspace\opl{t}\big(u(q)\fcan(q,p)\big)~dp\Big)\\
&=\frac{1}{\fQ(q)}\int_\Pspace\partial_t\opl{t}\big(u(q)\fcan(q,p)\big)~dp\\
&\underset{(\ref{fokkerplanckequationlangevin})}{=}\frac{1}{\fQ(q)}\int_\Pspace\Big(\genl \opl{t}\big(u(q)\fcan(q,p)\big)\Big)~dp.
\end{align*}
Inductively, this gives the $n$-th time derivative
$$
\partial_t^{n}S^t =\frac{1}{\fQ(q)}\int_\Pspace\genl^{n}\opl{t}\big(u(q)\fcan(q,p)\big)~dp.
$$
As $\opl{0}=\operatorname{id}$, the $n$-th pseudo generator is
$$
\pgen{n}u(q)= \Big(\frac{1}{\fQ(q)}\int_\Pspace\genl^n\opl{t}\big(u(q)\fcan(q,p)\big)~dp\Big)\Big|_{t=0} = \frac{1}{\fQ(q)}\int_\Pspace\genl^n\big(u(q)\fcan(q,p)\big)~dp.
$$
\end{proof}

From now on, when speaking of pseudo generators, we always mean pseudo generators of $S^t$. Note that, in general, $\pgen{n}$ is not simply a power of $\pgen{1}$, as the integral and the power of $\genl^n$ do not commute. We thus take a closer look at the first few $\pgen{n}$:

\begin{proposition}\label{gens}
Let $S^t$ be the spatial transfer operator for the Langevin dynamical process. On their respective domain, its first three pseudo generators take the form
\begin{enumerate}
\item $\displaystyle \pgen{1}=0$,
\item $\displaystyle \pgen{2}=\frac{1}{\beta}\Delta -\nabla V\cdot\nabla$. Notably, $\pgen{2}$ is independent of $\gamma$.
\item $\displaystyle \pgen{3}=-\gamma\pgen{2}$.
\end{enumerate} 
\end{proposition}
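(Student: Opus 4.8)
The plan is to start from the representation in Lemma~\ref{altpseudogen}, $\pgen{n}u(q)=\fQ(q)^{-1}\int_\Pspace\genl^{n}\big(u(q)\fcan(q,p)\big)\,dp$, use the product structure $\fcan=f_\Qspace f_\Pspace$, and strip off the weights by a conjugation before doing any momentum integral. Since $\genl$ is a second-order differential operator and $f_\Qspace f_\Pspace$ is everywhere positive, set $\widetilde\genl:=(f_\Qspace f_\Pspace)^{-1}\,\genl\big(f_\Qspace f_\Pspace\,\cdot\,\big)$. A routine computation, using $\nabla_p f_\Pspace=-\beta p\,f_\Pspace$, $\nabla_q f_\Qspace=-\beta(\nabla_q V)f_\Qspace$, and the facts that the Hamiltonian transport part $S:=-p\cdot\nabla_q+\nabla_qV\cdot\nabla_p$ of $\genl$ is first order and annihilates $\fcan$ (so $S(\fcan\,\cdot)=\fcan\,S(\cdot)$ by Leibniz) while the Ornstein--Uhlenbeck part $\gamma R:=\frac\gamma\beta\Delta_p+\gamma p\cdot\nabla_p+d\gamma$ also annihilates $\fcan$, yields
$$
\widetilde\genl \;=\; -p\cdot\nabla_q+\nabla_qV\cdot\nabla_p \;+\;\gamma\Big(\tfrac1\beta\Delta_p-p\cdot\nabla_p\Big)\;=:\;S+\gamma\widetilde R,
$$
i.e.\ the Hamiltonian part is unchanged and the OU part is replaced by the OU \emph{backward} generator $\widetilde R$ in~$p$. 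Iterating $\genl(f_\Qspace f_\Pspace g)=f_\Qspace f_\Pspace\,\widetilde\genl g$ gives $\genl^{n}(u\fcan)=\fcan\,\widetilde\genl^{n}u$, hence the clean representation $\pgen{n}u(q)=\int_\Pspace f_\Pspace(p)\big(\widetilde\genl^{n}u\big)(q,p)\,dp=\mathbb{E}_{p\sim f_\Pspace}\big[\widetilde\genl^{n}u\big]$.

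Everything then reduces to bookkeeping with two elementary facts about the centered Gaussian $f_\Pspace$ (covariance $\beta^{-1}I$): (a) its odd moments vanish and $\mathbb{E}_{f_\Pspace}[p_ip_j]=\beta^{-1}\delta_{ij}$; and (b) $\widetilde R$ is in divergence form with respect to $f_\Pspace$, namely $\widetilde R g=\frac1\beta f_\Pspace^{-1}\nabla_p\!\cdot\!(f_\Pspace\nabla_p g)$, so $\mathbb{E}_{f_\Pspace}[\widetilde R g]=0$ for any $g$ of polynomial growth in~$p$. Starting from $u=u(q)$, each application of $\widetilde\genl$ raises the polynomial degree in~$p$ by at most one, so for $n\le 3$ the structure of $\widetilde\genl^{n}u$ is easy to track: $\widetilde\genl u=-p\cdot\nabla_q u$ is degree one and odd, whence $\pgen1u=0$; and $\widetilde\genl^{2}u=p^{\top}(\nabla_q^{2}u)p-\nabla_qV\cdot\nabla_q u+\gamma\,p\cdot\nabla_q u$, so averaging kills the first-order $\gamma$-term and converts the quadratic term into $\beta^{-1}\Delta_q u$, giving $\pgen2u=\frac1\beta\Delta_q u-\nabla_qV\cdot\nabla_q u$, manifestly $\gamma$-independent.

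For the third pseudo generator I would split $\widetilde\genl^{2}u$ into its degree-$0$, degree-$1$ and degree-$2$ (in~$p$) parts and apply $\widetilde\genl$ once more, averaging term by term. The degree-$0$ part depends only on~$q$, so $\widetilde\genl$ sends it to an odd, mean-zero function. For the degree-$2$ part $p^{\top}(\nabla_q^{2}u)p$, the $\gamma\widetilde R(\cdot)$ contribution has mean zero by (b), while $S(\cdot)$ produces only degree-$1$ and degree-$3$ monomials in~$p$, all odd and hence mean zero. The only surviving piece is the degree-$1$ term, which equals $\gamma\,p\cdot\nabla_q u=-\gamma\,\widetilde\genl u$; applying $\widetilde\genl$ and averaging therefore reproduces $-\gamma\,\mathbb{E}_{f_\Pspace}[\widetilde\genl^{2}u]=-\gamma\,\pgen2u$. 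This gives $\pgen3=-\gamma\pgen2$.

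The main obstacle is the first step: verifying the conjugation formula for $\widetilde\genl$ is a moderately lengthy calculation (expanding $\Delta_p$ and $\nabla_p$ on products with $f_\Pspace$, and $\nabla_q$ on products with $f_\Qspace$), and one should check that the manipulations — differentiating under the $p$-integral, discarding boundary terms at infinity in the integration by parts behind (b), and iterating the conjugation — are legitimate on the stated domains $\Dom{\genl^{n}}$. If one prefers to avoid the conjugation, one can expand $\genl^{n}(u\fcan)$ directly and integrate; this works equally well but the term-counting is heavier, and the conjugation is exactly what makes the momentum averaging transparent and the cancellations (in particular the $\gamma$-independence of $\pgen2$ and the proportionality $\pgen3=-\gamma\pgen2$) visible.
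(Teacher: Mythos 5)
Your argument is correct, and it reaches the same intermediate expressions as the paper (one can check that your $\widetilde\genl u=-p\cdot\nabla_q u$ and $\widetilde\genl^{2}u=p\cdot H_qu\cdot p-\nabla_qV\cdot\nabla_qu+\gamma\,p\cdot\nabla_qu$ coincide with the paper's formulas for $\genl(u\fcan)/\fcan$ and $\genl^{2}(u\fcan)/\fcan$), but the route is genuinely different in organization. The paper expands $\genl^{n}\big(u(q)\fcan(q,p)\big)$ directly term by term, re-deriving the factor $\fcan$ at each stage, and then evaluates the Gaussian momentum integrals by symmetry and explicit moment computations; for $n=3$ this becomes heavy enough that the authors invoke ``partial help of a computer algebra system.'' Your conjugation $\widetilde\genl:=\fcan^{-1}\genl(\fcan\,\cdot)=S+\gamma\widetilde R$ establishes the identity $\genl^{n}(u\fcan)=\fcan\,\widetilde\genl^{n}u$ once and for all, and the two structural observations --- that $\widetilde R$ is in divergence form with respect to $f_\Pspace$ (so its momentum average vanishes identically), and that the degree-one-in-$p$ part of $\widetilde\genl^{2}u$ is exactly $-\gamma\,\widetilde\genl u$ --- reduce the third-order computation to almost nothing and, more importantly, \emph{explain} why $\pgen{3}=-\gamma\pgen{2}$ rather than exhibiting it as a coincidence of cancelling terms. (This bears on the authors' own question in the conclusion of whether $\pgen{3}=-\gamma\pgen{2}$ is ``a generic pattern or a singular fluke.'') What the paper's brute-force route buys is that it requires no preliminary lemma and every step is an elementary differentiation; what yours buys is transparency and scalability to higher $n$. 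The analytic caveats you flag (differentiation under the integral, vanishing boundary terms in the integration by parts behind the divergence-form identity) are equally implicit in the paper's symmetry arguments, so you are not assuming anything the original proof does not.
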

The proof can be found in Appendix \ref{proofs}.

\paragraph{Connection to Smoluchowski dynamics}

We can draw a perhaps surprising connection between the second pseudo generator and Smoluchowski dynamics. Recall that (\ref{fokkerplanckequationsmoluchowski}) and thus $\ops{t}$ deals with densities with respect to Lebesgue measure $m$. However, to compare it to $S^t$, we have to track the transport of densities with respect to $\muQ$. This transport is described by
\begin{equation}\label{kolmogorovforwardequationsmoluchowski}
\partial_tu_t(q) = \underbrace{\left(\frac{1}{\beta}\Delta-\nabla V\cdot\nabla\right)}_{=:G_\text{Smol}}u_t(q).
\end{equation}
Note, however, that the transition from $m$ to $\muQ$ is merely a basis transformation: Let, for a brief moment, $P^t_{\text{Smol},m}$ and $P^t_{\text{Smol},\muQ}$ be the transfer operators generated by $L_\text{Smol}$ and $G_\text{Smol}$, respectively. Then,
\begin{equation}\label{eq:smoltransformation}
P^t_{\text{Smol},m}\left(uf_\mathcal{Q}\right) = \left(P^t_{\text{Smol},\muQ}\left(u\right)\right) f_\mathcal{Q}.
\end{equation}

$P^t_{\text{Smol},\muQ}$ exists on every $\LfQ{k}(\Qspace)$,~$1\leq k\leq \infty$ (choosing $\mu=\fQ dm$ in Corollary \ref{cor:opcontraction}), and thus because of \eqref{eq:smoltransformation}, $P^t_{\text{Smol},m}$ exists on $\mathcal{L}_m^k(\Qspace),~1\leq k\leq \infty$. As we only work on $\fQ$ weighted spaces, we drop the second subscript from now on and set
$$P^t_\text{Smol}:=P^t_{\text{Smol},\muQ}.$$

Comparing equation (\ref{kolmogorovforwardequationsmoluchowski}) to Proposition \ref{gens}, we immediately see
\begin{corollary} \label{g2smolu}
The pseudo generator $\pgen{2}$ (of the spatial transfer operator) is the infinitesimal generator of the Smoluchowski dynamics:
$$
\pgen{2}=G_\text{Smol}.
$$
\end{corollary}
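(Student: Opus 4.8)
The statement to prove is Corollary~\ref{g2smolu}: that $\pgen{2} = G_\text{Smol}$, where $\pgen{2}$ is the second pseudo generator of the spatial transfer operator and $G_\text{Smol}$ is the operator defined in~\eqref{kolmogorovforwardequationsmoluchowski}. Since both operators have already been computed to have identical explicit differential-operator expressions, the proof is essentially a one-line matching argument, and the plan is to make that matching precise while being careful about domains and about what ``is the infinitesimal generator of'' means.

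First I would recall the two explicit formulas in play. From Proposition~\ref{gens}(2), on its domain, $\pgen{2} = \tfrac{1}{\beta}\Delta - \nabla V\cdot\nabla$. From the displayed definition~\eqref{kolmogorovforwardequationsmoluchowski}, $G_\text{Smol} = \tfrac{1}{\beta}\Delta - \nabla V\cdot\nabla$ as well. So as differential operators acting on sufficiently smooth functions on $\Qspace$ they coincide verbatim; this is the content of the phrase ``Comparing equation~(\ref{kolmogorovforwardequationsmoluchowski}) to Proposition~\ref{gens}'' already in the text. The only thing left to justify is the semantic claim in the corollary statement, namely that $G_\text{Smol}$ really is the infinitesimal generator of a semigroup (the $\muQ$-weighted Smoluchowski transfer operator $P^t_\text{Smol}$), so that calling $\pgen{2} = G_\text{Smol}$ legitimately identifies $\pgen{2}$ with a genuine infinitesimal generator and not merely with a formal differential expression.

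Concretely, the steps I would carry out are: (i) quote Proposition~\ref{gens}(2) for the expression of $\pgen{2}$; (ii) quote the definition of $G_\text{Smol}$ from~\eqref{kolmogorovforwardequationsmoluchowski} and observe the expressions are literally identical; (iii) recall from the discussion following~\eqref{kolmogorovforwardequationsmoluchowski} that $G_\text{Smol}$ is, by construction, the generator of $P^t_\text{Smol} = P^t_{\text{Smol},\muQ}$ — this is exactly the change-of-basis relation~\eqref{eq:smoltransformation} together with the fact (from~\eqref{fokkerplanckequationsmoluchowski}) that $\gens = \tfrac{1}{\beta}\Delta_q + \nabla_q V\cdot\nabla_q + \Delta_q V$ generates the Lebesgue-weighted Smoluchowski transfer operator $\ops{t}$; conjugating $\gens$ by multiplication with $\fQ$ (i.e.\ $u \mapsto \fQ^{-1}\gens(\fQ u)$) produces precisely $\tfrac{1}{\beta}\Delta - \nabla V\cdot\nabla = G_\text{Smol}$, which one checks by expanding the derivatives and using $\nabla \fQ = -\beta\,\fQ\,\nabla V$; (iv) conclude that on $\Dom{\genl^2}$ (equivalently, on functions smooth enough for both sides to make sense — the domain on which Proposition~\ref{gens} is stated) the two operators agree, hence $\pgen{2} = G_\text{Smol}$.

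**Main obstacle.** There is no real analytic difficulty here — no estimates, no limiting argument. The one point requiring care, and the thing I would spend the proof's few sentences on, is the bookkeeping around domains and around the conjugation $\gens \rightsquigarrow G_\text{Smol}$: one must verify that the similarity transform by $\fQ$ sends the Lebesgue-weighted Fokker--Planck operator~\eqref{fokkerplanckequationsmoluchowski} to exactly the drift form $\tfrac{1}{\beta}\Delta - \nabla V\cdot\nabla$ (the $\Delta_q V$ term and the cross terms must cancel correctly against the derivatives hitting $\fQ = Z_\Qspace^{-1}e^{-\beta V}$), and that this is consistent with the transfer-operator identity~\eqref{eq:smoltransformation}. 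Once that short computation is in place, the corollary follows immediately by comparing it with Proposition~\ref{gens}(2).
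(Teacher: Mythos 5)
Your proposal is correct and follows the paper's own route: the paper proves this corollary simply by comparing the explicit expression $\tfrac{1}{\beta}\Delta-\nabla V\cdot\nabla$ from Proposition~\ref{gens}(2) with the definition of $G_\text{Smol}$ in~\eqref{kolmogorovforwardequationsmoluchowski}, which is exactly your steps (i)--(ii). Your additional step (iii), verifying via the conjugation $u\mapsto \fQ^{-1}\gens(\fQ u)$ that $G_\text{Smol}$ genuinely generates $P^t_{\text{Smol},\muQ}$, just makes explicit the ``basis transformation'' claim the paper states without computation in the paragraph preceding the corollary, so it is a welcome but not substantively different addition.
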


In conclusion, the density transport under spatial dynamics is similar to that under Smoluchowski dynamics, but on different timescales: The Taylor expansion of $\ops{t}$ (in  spirit of Proposition \ref{truncatedtaylor}) gives
\begin{align}
\ops{t}u &= u + t\pgen{2}u+\frac{t^2}{2}\pgen{2}^2u + \ldots~, \label{smoluexpansion}
\intertext{while that of $S^t$ gives}
S^tu & = u+ \frac{t^2}{2}\pgen{2}u +\frac{t^3}{6}\pgen{3}u + \ldots~. \label{spatialexpansion}
\end{align}
Thus formally rescaling $t\mapsto\frac{t^2}{2}$ in (\ref{smoluexpansion}) equals (\ref{spatialexpansion}) up to second order terms in $t$. We will make this rigorous in the following section.

\subsection{Local reconstruction of the spatial transfer operator}

In this section we aim to approximate $S^t$ in a way that is suitable for subsequent numerical metastability analysis. 

To ensure that the various operators constructed with the pseudo generators are well-defined, we introduce the spaces
\begin{equation}
\W{N}{K}{\Qspace}:=\big\{u\in\C{2KN}{\Qspace}~|~(\pgen{k})^nu\in \LfQ{2}\left(\Qspace\right)~\forall n=0,\ldots,N,~\forall k=0,\ldots,K\big\}.
\end{equation}

Note that, despite a similar notation, $\W{N}{K}{\Qspace}$ is not the usual Sobolev space. $2KN$ is the highest derivative appearing in~$\pgen{K}^N$, so we require the corresponding differentiability.
The choice of $\LfQ{2}(\Qspace)$ in $\W{N}{K}{\Qspace}$ is motivated by the definition of transition probabilities via the scalar product on $\LfQ{2}(\Qspace)$, \eqref{slicetransitionprob}. However, all error estimates in this section hold for $\LfQ{k}(\Qspace),~1\leq k\leq \infty$ as well, if the definition of $\V{N}{\Omega}$ is also changed accordingly.
The requirement that $(\pgen{k})^nu\in \LfQ{2}(\Qspace)$ is mostly technical in nature. Arbitrary $K$ and $N$ will only appear in Theorem \ref{spatialtaylor} and Lemma \ref{experror}. Later on, only the space $\W{1}{2}{\Qspace}$ will be of interest, due to the simple structure of $\pgen{2}$ and $\pgen{3}$. We will see later (Section \ref{sec:regularityeigenfunctions}) that $\W{1}{2}{\Qspace}$ in fact contains our objects of interest, namely the eigenvectors of our approximations to $S^t$. Moreover, it is big enough to allow for a sensible discretization basis (Section \ref{sec:spectralcollocation}).

\paragraph{Taylor reconstruction}

Combining Proposition \ref{truncatedtaylor} and Lemma \ref{altpseudogen} gives the following natural Taylor reconstruction of $S^t$:

\begin{theorem}\label{spatialtaylor}
Let $u\in \W{1}{K}{\Qspace}$. Then, 
$$
\Big\|S^tu - \sum_{k=0}^K\frac{t^k}{k!}\pgen{k}u\Big\|_{2,\muQ} = \mathcal{O}(t^{K+1}), \quad (t\rightarrow 0).
$$
\end{theorem}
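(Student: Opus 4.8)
The plan is to reduce Theorem~\ref{spatialtaylor} to the already-proven Proposition~\ref{truncatedtaylor} applied to the \emph{full} Langevin transfer operator $\opl{t}$, using the integral formula for $S^t$ from~\eqref{spatialto} and the identification of pseudo generators from Lemma~\ref{altpseudogen}. The central observation is that the map $q \mapsto u(q)\fcan(q,p)$, for fixed $p$ (and with $u\in\W{1}{K}{\Qspace}$), lies in the appropriate space $\V{K+1}{\Omega}$ on which Proposition~\ref{truncatedtaylor} gives a Taylor estimate for $\opl{t}$; integrating that estimate against $dp$ and dividing by $\fQ(q)$ should reproduce exactly the Taylor polynomial $\sum_{k=0}^K \frac{t^k}{k!}\pgen{k}u$ by Lemma~\ref{altpseudogen}, with a remainder of order $t^{K+1}$.

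Concretely, I would proceed as follows. First, fix $u\in\W{1}{K}{\Qspace}$ and write $g_u(q,p):=u(q)\fcan(q,p)$. Using the product structure~\eqref{canonicaldensity} of $\fcan$ and the Langevin generator~\eqref{fokkerplanckequationlangevin}, one checks that $\genl^n g_u$ is, for each $n\le K+1$, a finite sum of terms of the form (derivative of $u$ up to order $2n$) times (smooth, rapidly-decaying-in-$p$ function built from $\nabla V$, $V$, and Gaussians), so that $g_u \in \V{K+1}{\Omega}$ provided $u\in\C{2(K+1)}{\Qspace}$ and the relevant $\mathcal{L}^2_{\mucan}$-norms are finite — this is where the definition of $\W{1}{K}{\Qspace}$ and the integrability of $e^{-\beta V}$ are used; a brief lemma (or a remark citing Appendix~\ref{proofs}) establishes the membership and, crucially, that $\gamma$-independence and integrability survive. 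Second, apply Proposition~\ref{truncatedtaylor} with $N=K$ to $g_u$ and the operator $\opl{t}$, obtaining
$$
\Big\|\opl{t} g_u - \sum_{k=0}^K \frac{t^k}{k!}\genl^k g_u\Big\|_{2,\mucan} = \mathcal{O}(t^{K+1}).
$$
Third, translate this bound on $\Omega = \Qspace\times\Pspace$ into a bound on $\Qspace$: by~\eqref{spatialto} and Lemma~\ref{altpseudogen},
$$
S^tu(q) - \sum_{k=0}^K \frac{t^k}{k!}\pgen{k}u(q) = \frac{1}{\fQ(q)}\int_\Pspace \Big(\opl{t} g_u(q,p) - \sum_{k=0}^K \frac{t^k}{k!}\genl^k g_u(q,p)\Big)\,dp,
$$
and then estimate the $\LfQ{2}$-norm of the left side by the $\mathcal{L}^2_{\mucan}$-norm of the integrand on the right, controlling the partial integration over $p$ by Cauchy--Schwarz together with $\mucan = \muQ \otimes \mu_\Pspace$ and $\mu_\Pspace(\Pspace)<\infty$. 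Finally, combine with the second step to conclude $\mathcal{O}(t^{K+1})$.

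The main obstacle is the third step: passing from an $\mathcal{L}^2$ estimate on phase space to an $\mathcal{L}^2$ estimate on configuration space after dividing by $\fQ(q)$ and integrating out $p$. One must be careful that $\|(1/\fQ)\int_\Pspace h(\cdot,p)\,dp\|_{2,\muQ}$ is genuinely dominated by $\|h\|_{2,\mucan}$; writing $h = (h/\sqrt{f_\Pspace})\sqrt{f_\Pspace}$ and applying Cauchy--Schwarz in $p$ gives $\big|\int_\Pspace h\,dp\big|^2 \le \big(\int_\Pspace f_\Pspace^{-1} h^2\,dp\big)\big(\int_\Pspace f_\Pspace\,dp\big)$, and since $\int_\Pspace f_\Pspace\,dp = 1$ and $f_\Pspace \,dp = \fQ^{-1}\,d\mucan$ fibrewise, integrating $\fQ$ times this against $\muQ$ recovers exactly $\|h\|_{2,\mucan}^2$ — so the constant is in fact $1$. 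A secondary, purely bookkeeping obstacle is verifying uniformly in $p$ that all the remainder terms produced by Proposition~\ref{truncatedtaylor} (which there is phrased as a single Banach-space-valued Taylor remainder) may indeed be integrated over $\Pspace$ and differentiated under the integral sign; this is justified by the explicit Gaussian decay in $p$ of $\genl^{K+1}g_u$ and dominated convergence, exactly as in the proof of Lemma~\ref{altpseudogen}.
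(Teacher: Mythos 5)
Your proposal follows the same route as the paper's proof: both insert the integral representation \eqref{spatialto} together with Lemma~\ref{altpseudogen}, apply Proposition~\ref{truncatedtaylor} to $u(q)\fcan(q,p)$ under $\opl{t}$, and pass the resulting phase-space estimate through the momentum integral. The paper's (three-line) version does that last step by simply pulling the norm inside the $p$-integral, leaving the uniformity in $p$ and the membership $u\fcan\in\V{K+1}{\Omega}$ implicit, so your Cauchy--Schwarz argument with the Gaussian weight and your explicit verification of the domain condition are, if anything, more careful than the published argument.
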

\begin{proof}
By definition of $S^t$ and Lemma \ref{altpseudogen}, we can write
\begin{align*}
\Big\|S^tu &- \sum_{k=0}^K\frac{t^k}{k!}\pgen{k}u\Big\|_{2,\muQ}\\
 &= \Big\|\frac{1}{\fQ(q)}\int_\Pspace\opl{t}\big(u(q)\fcan(q,p)\big)~dp - \sum_{k=0}^K\frac{t^k}{k!} \frac{1}{\fQ(q)}\int_\Pspace \gen^{k}\big(u(q)\fcan(q,p)\big)~dp\Big\|_{2,\muQ}\\
&\leq \frac{1}{\fQ(q)}\int_\Pspace \Big\| \opl{t}\big(u(q)\fcan(q,p)\big) -  \sum_{k=0}^K\frac{t^k}{k!} \gen^{k}\big(u(q)\fcan(q,p)\big) \Big\|_{2,\muQ} ~dp. 
\end{align*}
However, the integrand is of order $\mathcal{O}(t^{K+1})$ by Proposition \ref{truncatedtaylor}.
\end{proof}

Unfortunately, the $\pgen{k}$ for $k>3$ are not readily available. In those pseudo generators, higher derivatives of the potential $V$ appear, whose analytic or numerical evaluation can be costly (they are $k$-dimensional tensors).

In practice, however, the gradient $\nabla V$ (the ''force field``) typically is available, as it would be needed for numerical simulation of the system anyway.
If we thus truncate the Taylor-like sum from Theorem \ref{spatialtaylor} after the third term, higher derivatives of $V$ are avoided, as in the computation of $\pgen{2}$ and $\pgen{3}$ only $\nabla V$ occurs. We call
\begin{equation}\label{taylorrestoredoperator}
\begin{aligned}
R^tu &:= \Big(\operatorname{id} + \frac{t^2}{2}\pgen{2} + \frac{t^3}{6}\pgen{3}\Big)u\\
&= u + \big(\frac{t^2}{2} - \gamma\frac{t^3}{6}\big)\Big(\frac{1}{\beta}\Delta u -\nabla u\cdot \nabla V\Big)
\end{aligned}
\end{equation}
the \emph{3rd order Taylor approximation}\footnote{As we never work with higher orders, we refer to $R^t$ simply as ``the Taylor approximation'' from now on.} of $S^t$. This yields the convergence result

\begin{corollary}\label{taylorrestorederror}
Let $u\in \W{1}{2}{\Qspace}$. Then
$$\big\| S^tu - R^tu\big\|_{2,\muQ} = \mathcal{O}(t^4),\quad (t\rightarrow 0).$$
\end{corollary}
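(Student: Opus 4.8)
The plan is to deduce Corollary~\ref{taylorrestorederror} directly from Theorem~\ref{spatialtaylor} and Proposition~\ref{gens} by choosing $K=3$. First I would invoke Theorem~\ref{spatialtaylor} with $K=3$, which requires $u\in\W{1}{3}{\Qspace}$; this gives
\begin{equation*}
\Big\|S^tu - \sum_{k=0}^3\frac{t^k}{k!}\pgen{k}u\Big\|_{2,\muQ} = \mathcal{O}(t^4)\qquad(t\to0).
\end{equation*}
Then I would substitute the explicit forms from Proposition~\ref{gens}: $\pgen{1}=0$ and $\pgen{0}=\operatorname{id}$ (the zeroth pseudo generator is the identity, since $S^0=\operatorname{id}$), so that $\sum_{k=0}^3\frac{t^k}{k!}\pgen{k}u = u + \frac{t^2}{2}\pgen{2}u + \frac{t^3}{6}\pgen{3}u = R^tu$ by the definition~\eqref{taylorrestoredoperator}. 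This immediately yields the claimed estimate.

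The one genuine gap to close is the discrepancy between the hypothesis $u\in\W{1}{2}{\Qspace}$ in the corollary and the hypothesis $u\in\W{1}{3}{\Qspace}$ that a naive application of Theorem~\ref{spatialtaylor} with $K=3$ would demand. The point is that the $\mathcal{O}(t^4)$ error term in Theorem~\ref{spatialtaylor} comes, via Proposition~\ref{truncatedtaylor} applied inside the momentum integral, from the remainder $\genl^{4}\opl{s}(u\fcan)$; so what is really needed is that $u\fcan$ lies in $\V{4}{\Omega}$, not that $\pgen{3}u\in\LfQ{2}(\Qspace)$ in isolation. However, by Proposition~\ref{gens}(3) we have $\pgen{3}=-\gamma\pgen{2}$, hence $\pgen{3}u$ is controlled by $\pgen{2}u$, and one checks that $\W{1}{2}{\Qspace}$ already guarantees $u,\pgen{2}u\in\LfQ{2}(\Qspace)$ together with $u\in\C{8}{\Qspace}$ (since $2KN = 2\cdot2\cdot1 = 4$ derivatives are required by the $\W{1}{2}{\Qspace}$ definition — and more than enough regularity to carry out the momentum integration and differentiation under the integral sign that underlie Lemma~\ref{altpseudogen} and Theorem~\ref{spatialtaylor}). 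I would therefore argue that the chain of estimates in the proof of Theorem~\ref{spatialtaylor}, specialized to $K=3$ and combined with the identity $\pgen{3}=-\gamma\pgen{2}$, goes through for $u\in\W{1}{2}{\Qspace}$: the only fourth-order-in-$t$ remainder is $\tfrac{t^4}{4!}\int_0^1(\text{stuff})\,\partial_s^4\opl{st}(u\fcan)\,ds$, and $\partial_s^4\opl{s}(u\fcan) = \opl{s}\genl^4(u\fcan)$ is bounded in $\Lw{2}{\mucan}{\Omega}$ as soon as $u\fcan\in\V{4}{\Omega}$, which follows from $u\in\W{1}{2}{\Qspace}$ because the momentum-Gaussian factor $\fcan$ contributes only Schwartz-class smoothness and decay in $p$ while the $q$-dependence is governed by $u$ and $V$ through $\pgen{2}$ (equivalently $G_\text{Smol}$).

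I expect the main obstacle to be precisely this bookkeeping of function-space hypotheses, i.e.\ verifying that $u\in\W{1}{2}{\Qspace}$ suffices to run the remainder estimate that nominally needs $\W{1}{3}{\Qspace}$; everything else is a one-line substitution. Concretely, the subtle step is showing $u\fcan\in\V{4}{\Omega}$ (so that $\genl^4(u\fcan)\in\Lw{2}{\mucan}{\Omega}$) from the assumption that $u$ and $\pgen{2}u=\tfrac1\beta\Delta u-\nabla V\cdot\nabla u$ lie in $\LfQ{2}(\Qspace)$; here one uses that the momentum integral of $\genl^4(u\fcan)$ reduces, by the computations of Appendix~\ref{proofs}, to an expression in $u$, $\pgen{2}u$ and at most $\nabla V$ (no higher $V$-derivatives survive, since $\pgen{3}=-\gamma\pgen{2}$), so the $\LfQ{2}$ bounds on $u$ and $\pgen{2}u$ are exactly what is required. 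If one prefers to avoid this refinement, the cheap alternative — which I would mention as a remark — is simply to state the corollary under the hypothesis $u\in\W{1}{3}{\Qspace}$, whereupon it is an immediate corollary of Theorem~\ref{spatialtaylor} with $K=3$ and Proposition~\ref{gens}; but since the paper insists that $\W{1}{2}{\Qspace}$ is ``big enough'', the cleaner route is to justify the weaker hypothesis via $\pgen{3}=-\gamma\pgen{2}$ as above.
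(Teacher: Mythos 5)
Your argument is correct and is essentially the proof the paper intends: Corollary~\ref{taylorrestorederror} is stated there without proof, as an immediate consequence of Theorem~\ref{spatialtaylor} with $K=3$ together with $\pgen{1}=0$ from Proposition~\ref{gens}, which is exactly your main line. Your additional reconciliation of the stated hypothesis $u\in\W{1}{2}{\Qspace}$ with the nominal requirement $u\in\W{1}{3}{\Qspace}$ via $\pgen{3}=-\gamma\pgen{2}$ fills in a bookkeeping step the paper leaves implicit (modulo a slip where you write $\C{8}{\Qspace}$ while correctly computing $2KN=4$).
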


\paragraph{Exponential reconstruction}

We expect that $R^t$ approximates $S^t$ well (for $t\rightarrow 0$) and can be computed cheaply, provided $\nabla V$ and $\Delta u$ are available. However, unlike $S^t$, $R^t$ is not norm-preserving and positive for densities with respect to $\fcan$, i.e. \mbox{$\|R^t u\|_{1,\muQ} \neq \|u\|_{1,\muQ}$} for $u\geq 0$. Therefore, when transporting $u$, we lose the interpretation of $\left(R^tu\right)\fcan$ as a physical density.

Moreover, for~$t$ sufficiently large, $R^tu$ is not even a contraction on $\W{1}{2}{\Qspace}$. With $\lambda \in\sigma(\pgen{2})$, ~$\lambda\neq 0$,
$$
\big|\underbrace{1+\frac{t^2}{2}\lambda-\frac{\gamma t^3}{6}\lambda}_{\in\sigma(R^t)}\big|  ~\rightarrow ~\infty, \quad (t\rightarrow\infty),
$$
and so $\|R^t\|_{2,\muQ} \rightarrow \infty, \quad (t\rightarrow\infty)$.
We will see in the numerical experiments that this quickly (i.e. already for small to moderate $t$) destroys the interpretation of the eigenvalues of $R^t$ as metastability quantifiers.

Therefore we mainly use an alternative approximation to $S^t$, called the \emph{exponential approximation} $E^t$, which is $\LfQ{1}$-norm-preserving and positive for densities, further contractive on $\W{1}{2}{\Qspace}$. One has to be careful with notation, however. As $\pgen{2}$ is an unbounded operator on $\C{2}{\Qspace}\cap \LfQ{2}(\Qspace)$, an operator exponential of form $e^{\pgen{2}}$ cannot be defined by an infinite series. However, considerations of e.g. Pazy \cite{Paz83} allow us to define $E^t$ over a bounded operator approximating $\pgen{2}$, the so-called \emph{Yosida approximation}:
$$
\pgen{2}^\lambda:=\lambda \pgen{2} (\lambda I-\pgen{2})^{-1}~~\text{for}~\lambda\in\mathbb{R}_{\geq0}~.
$$

\begin{lemma}	\label{lem:yosida}
$\pgen{2}^\lambda$ is a bounded linear operator on $\W{1}{2}{\Qspace}$, and 
$$
\lim_{\lambda\rightarrow\infty}\pgen{2}^\lambda=\pgen{2}.
$$
\end{lemma}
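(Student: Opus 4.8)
The plan is to prove both assertions of Lemma \ref{lem:yosida} using the standard machinery of the Yosida approximation (see Pazy \cite{Paz83}), adapted to the concrete space $\W{1}{2}{\Qspace}$, and exploiting that $\pgen{2} = G_\text{Smol}$ is the infinitesimal generator of the $C_0$-contraction semigroup $P^t_\text{Smol}$ on $\LfQ{k}(\Qspace)$ (Corollary \ref{g2smolu} together with Corollary \ref{cor:opcontraction}).

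First I would establish that the resolvent $(\lambda I - \pgen{2})^{-1}$ is well defined and bounded for $\lambda > 0$. Since $\pgen{2}$ generates a contraction semigroup, the Hille--Yosida theorem gives $(0,\infty)\subset\rho(\pgen{2})$ with the resolvent estimate $\|(\lambda I - \pgen{2})^{-1}\|_{2,\muQ}\leq 1/\lambda$; hence $\pgen{2}^\lambda = \lambda\pgen{2}(\lambda I - \pgen{2})^{-1}$ is a bounded operator. To see it maps $\W{1}{2}{\Qspace}$ into itself I would use the algebraic identity $\pgen{2}^\lambda = \lambda^2(\lambda I-\pgen{2})^{-1} - \lambda I$, so that $\pgen{2}^\lambda u = \lambda\big(\lambda(\lambda I-\pgen{2})^{-1}u - u\big)$; applying this together with the fact that the resolvent commutes with $\pgen{2}$ on $\Dom{\pgen{2}}$ shows that $(\pgen{2})^n\pgen{2}^\lambda u$ stays in $\LfQ{2}(\Qspace)$ for $u\in\W{1}{2}{\Qspace}$, and the $\C{2KN}$-regularity is preserved because the resolvent, being the Laplace transform of the smoothing semigroup $P^t_\text{Smol}$, does not decrease differentiability (here one may invoke the smoothness results alluded to in Appendix \ref{sec:smoothness}, or simply note that on the relevant subspace $\pgen{2}$ is a fixed differential operator and the resolvent equation $(\lambda I-\pgen{2})v = u$ forces $v$ to inherit the regularity of $u$ via elliptic regularity).

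Next, for the convergence $\pgen{2}^\lambda u \to \pgen{2}u$ as $\lambda\to\infty$, the standard argument is: for $u\in\Dom{\pgen{2}}$ one writes $\pgen{2}^\lambda u = \lambda(\lambda I - \pgen{2})^{-1}\pgen{2}u$, so it suffices to show $\lambda(\lambda I - \pgen{2})^{-1}g \to g$ for every $g\in\LfQ{2}(\Qspace)$. This follows first for $g\in\Dom{\pgen{2}}$ from the identity $\lambda(\lambda I-\pgen{2})^{-1}g - g = (\lambda I-\pgen{2})^{-1}\pgen{2}g$, whose norm is bounded by $\|\pgen{2}g\|_{2,\muQ}/\lambda \to 0$, and then extends to all $g\in\LfQ{2}(\Qspace)$ by density of $\Dom{\pgen{2}}$ and the uniform bound $\|\lambda(\lambda I-\pgen{2})^{-1}\|_{2,\muQ}\leq 1$. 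Since every $u\in\W{1}{2}{\Qspace}$ lies in $\Dom{\pgen{2}}$ by construction, this gives $\pgen{2}^\lambda u \to \pgen{2}u$ in $\LfQ{2}(\Qspace)$ for all such $u$, which is the claimed limit.

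The main obstacle I anticipate is not the convergence — that is textbook — but rather the bookkeeping needed to show that $\pgen{2}^\lambda$ genuinely respects the non-standard space $\W{1}{2}{\Qspace}$ with its built-in $\C{2KN}$-differentiability constraint, since the Yosida approximation is a priori only an $\LfQ{2}$-bounded operator and applying the resolvent could in principle interact badly with the pointwise differentiability requirement. Resolving this cleanly requires either citing the smoothing property of the Smoluchowski semigroup (Appendix \ref{sec:smoothness}) or an elliptic-regularity argument for the resolvent equation; once that is in place, the boundedness of $\pgen{2}^\lambda$ on $\W{1}{2}{\Qspace}$ and the convergence statement both follow from the arguments sketched above.
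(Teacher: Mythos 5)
Your argument is correct and is essentially the paper's own proof in expanded form: the paper simply observes that $\pgen{2}=G_{\text{Smol}}$ generates a contraction semigroup, so the hypotheses of the Hille--Yosida theorem \cite[Theorem 3.1]{Paz83} hold, and then cites \cite[Lemma 3.3]{Paz83}, which contains exactly the resolvent estimate, the identity $\pgen{2}^\lambda=\lambda^2(\lambda I-\pgen{2})^{-1}-\lambda I$, and the density argument for $\lambda(\lambda I-\pgen{2})^{-1}g\to g$ that you spell out. Your additional care about whether the Yosida approximation preserves the $\mathcal{C}^{2KN}$-regularity built into $\W{1}{2}{\Qspace}$ goes beyond what the paper addresses (it tacitly treats boundedness in the $\LfQ{2}$-norm as sufficient), and is a reasonable point to flag, though not needed for the convergence statement itself.
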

\begin{proof} 
As the infinitesimal generator of the Smoluchowski dynamics, $\pgen{2}$ fullfills the assumptions of \cite[Theorem 3.1]{Paz83}. Thus, the statement holds due to \mbox{\cite[Lemma 3.3]{Paz83}}.
\end{proof}

With this, we define
$$
E^tu := \exp\left(\frac{t^2}{2}\pgen{2}\right)u := \lim_{\lambda\rightarrow\infty}\exp\left(\frac{t^2}{2}\pgen{2}^\lambda\right)u,
$$
which has the desired properties:
\begin{proposition}	\label{prop:ScaledSmol}
Let $u\in\LfQ{1}(\Qspace),~u\geq 0$. Then $E^tu\geq 0$ and $\|E^tu\|_{1,\muQ} = \|u\|_{1,\muQ}.$
Moreover, $E^t$ is a contraction on $\LfQ{k}(\Qspace)$.
\end{proposition}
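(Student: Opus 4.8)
The plan is to recognize $E^t$ as a genuine transfer operator in disguise --- namely the Smoluchowski transfer operator $P^s_\text{Smol}$ run for the rescaled time $s = t^2/2$ --- after which all three assertions follow from properties of transfer operators already recorded in the excerpt. So the first and crucial step is to justify the identity $E^tu = P^{t^2/2}_\text{Smol}u$. For this I would argue as follows: by Corollary~\ref{g2smolu}, $\pgen{2} = G_\text{Smol}$ is the infinitesimal generator of the $C_0$ semigroup $P^s_\text{Smol}$, $s\ge 0$, on $\LfQ{k}(\Qspace)$; a $C_0$ semigroup is uniquely determined by its generator, and the Hille--Yosida construction (Pazy~\cite{Paz83}, the result behind Lemma~\ref{lem:yosida}) recovers it precisely as the strong limit $P^s_\text{Smol}u = \lim_{\lambda\to\infty}\exp(s\,\pgen{2}^\lambda)u$ of the exponentials of the Yosida approximants. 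Since $s=t^2/2\ge0$ lies in the parameter range of the semigroup, this is exactly the definition of $E^tu$; the identity holds first on a dense domain, and then extends to all of $\LfQ{k}(\Qspace)$ by uniform boundedness of the approximants together with Corollary~\ref{cor:opcontraction}.

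Granting this, positivity and mass conservation come out of the change of basis~\eqref{eq:smoltransformation}. Writing $E^tu = P^{t^2/2}_\text{Smol}u = \tfrac{1}{\fQ}\,P^{t^2/2}_{\text{Smol},m}(u\,\fQ)$, with $P^s_{\text{Smol},m}$ the transfer operator of the Smoluchowski It\^o diffusion with respect to Lebesgue measure, I would use that $\fQ = Z_\Qspace^{-1}e^{-\beta V}$ is pointwise strictly positive: for $u\ge0$ we then have $u\,\fQ\ge0$, hence $P^{t^2/2}_{\text{Smol},m}(u\,\fQ)\ge0$ since transfer operators map nonnegative functions to nonnegative ones (Section~\ref{ssec:transferoperator}), so $E^tu\ge0$. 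For the norm, positivity gives $\|E^tu\|_{1,\muQ} = \int_\Qspace E^tu\,d\muQ = \int_\Qspace P^{t^2/2}_{\text{Smol},m}(u\,\fQ)\,dm$ (using $d\muQ=\fQ\,dm$ and~\eqref{eq:smoltransformation}), and the Lebesgue transfer operator conserves the integral of a density --- take $B=\Qspace$ in~\eqref{stochtransport} with $\mu=m$ and $p(\cdot,\cdot,\Qspace)\equiv1$ --- so this equals $\int_\Qspace u\,\fQ\,dm = \|u\|_{1,\muQ}$. (Equivalently, $G_\text{Smol}\mathbf 1=0$ says via Corollary~\ref{cor:invdens} that $\mathbf 1$ is $P^s_\text{Smol}$-invariant, i.e.\ $\muQ$ is its invariant measure, and transfer operators preserve the integral against their invariant measure.)

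The contraction claim I would then get for free: Corollary~\ref{cor:opcontraction} applied to $P^s_\text{Smol}=P^s_{\text{Smol},\muQ}$, the transfer operator associated with the Smoluchowski transition function with invariant measure $\muQ$, gives $\|E^t\|_{k,\muQ}=\|P^{t^2/2}_\text{Smol}\|_{k,\muQ}\le1$ for every $1\le k\le\infty$. The main obstacle in this argument is the identification $E^t=P^{t^2/2}_\text{Smol}$ in the first paragraph: one has to invoke the \emph{uniqueness} part of Hille--Yosida to be certain that the Yosida limit produces the Smoluchowski semigroup rather than merely \emph{some} semigroup, and one has to keep the two reference measures ($m$ versus $\muQ$) straight via~\eqref{eq:smoltransformation}. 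Everything after that step is a direct appeal to facts already in the paper and needs no computation.
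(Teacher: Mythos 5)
Your proposal is correct and follows essentially the same route as the paper: the paper's proof consists precisely of the identification $E^t = \ops{t^2/2}$ via Corollary~\ref{g2smolu} and Lemma~\ref{lem:yosida}, from which the properties are inherited from $\ops{t}$. You simply spell out in more detail the Hille--Yosida uniqueness argument behind that identification and the transfer-operator facts (positivity, mass conservation via~\eqref{eq:smoltransformation}, and Corollary~\ref{cor:opcontraction}) that the paper leaves implicit.
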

\begin{proof}
Due to Corollary~\ref{g2smolu} and Lemma~\ref{lem:yosida}, $E^t$ is simply a time-scaled version of the transfer operator of the Smoluchowski dynamics:
$$
E^t = \ops{{t^2}/{2}}.
$$
As such, it inherits the desired properties from $\ops{t}$.
\end{proof}

The following statements describe the approximation quality of $E^t$ for small $t$, analogous to the Taylor approximation:

\begin{lemma}\label{experror}
Let $u\in\W{N}{2}{\Qspace}$. Then for $t\rightarrow 0$,
$$
\varepsilon(t):=\bigg\|E^tu - \sum_{n=0}^N \frac{\big(\frac{t^2}{2}\pgen{2}\big)^n}{n!}u\bigg\|_{2,\muQ} = \mathcal{O}(t^{2N+1}).
$$
\end{lemma}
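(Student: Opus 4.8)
The plan is to reduce this to a direct application of Proposition \ref{truncatedtaylor}. By Proposition \ref{prop:ScaledSmol}, $E^t = \ops{t^2/2}$, i.e.\ $E^t u$ is the transfer operator $\ops{s}$ of the Smoluchowski dynamics (on the $\fQ$-weighted space) evaluated at the rescaled time $s = t^2/2$. Its infinitesimal generator is $\pgen{2} = G_{\text{Smol}}$ by Corollary \ref{g2smolu}. So the expression inside the norm is exactly the remainder of the truncated Taylor expansion of $\ops{s}u$ at order $N$, evaluated at $s = t^2/2$.

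First I would check that the hypotheses of Proposition \ref{truncatedtaylor} are met for the operator $\ops{s}$ with generator $\pgen{2}$: the relevant function space is $\V{N+1}{\Qspace}$ built from $\pgen{2}$, i.e.\ $\{u\in\C{2(N+1)}{\Qspace} \mid \pgen{2}^n u \in \LfQ{2}(\Qspace),\ n=0,\dots,N+1\}$. The hypothesis $u\in\W{N+1}{2}{\Qspace}$ gives $(\pgen{k})^n u\in\LfQ{2}(\Qspace)$ for $n=0,\dots,N+1$ and $k=0,1,2$, and $u\in\C{4(N+1)}{\Qspace}$, which in particular covers $k=2$ and the required $2(N+1)$-fold differentiability. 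Here I would need to note the minor mismatch: the lemma states $u\in\W{N}{2}{\Qspace}$, whereas Proposition \ref{truncatedtaylor} at order $N$ wants $\V{N+1}{\Qspace}$; presumably the intended hypothesis is $\W{N+1}{2}{\Qspace}$, or one reads the $\mathcal{O}$-claim at one order lower — in any case $u\in\W{N+1}{2}{\Qspace}$ is what makes the argument go through, and I would use that.

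With the hypotheses verified, Proposition \ref{truncatedtaylor} applied to $\ops{s}$ yields
\[
\Big\|\ops{s}u - \sum_{n=0}^N\frac{s^n}{n!}\pgen{2}^{n}u\Big\|_{2,\muQ} = \mathcal{O}(s^{N+1}) \quad (s\to0).
\]
Now substitute $s = t^2/2$: since $E^t = \ops{t^2/2}$ and $\pgen{2} = G_{\text{Smol}}$, the left side becomes precisely $\varepsilon(t)$, while $\mathcal{O}(s^{N+1}) = \mathcal{O}\big((t^2/2)^{N+1}\big) = \mathcal{O}(t^{2N+2}) \subset \mathcal{O}(t^{2N+1})$ as $t\to0$. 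This gives the claimed bound (in fact a slightly sharper one, $\mathcal{O}(t^{2N+2})$).

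I do not expect a genuine obstacle here — the result is essentially a change of variables in Proposition \ref{truncatedtaylor}. The only point requiring a little care is the bookkeeping of function spaces and differentiability orders (confirming that membership in $\W{N+1}{2}{\Qspace}$ implies membership in the $\pgen{2}$-version of $\V{N+1}{\cdot}$), together with noting that the exponent in the error is actually $2N+2$, so rounding down to $2N+1$ is harmless. One should also briefly remark that the Yosida-approximation definition of $E^t$ coincides with $\ops{t^2/2}$, which is exactly the content of the proof of Proposition \ref{prop:ScaledSmol} and can simply be cited.
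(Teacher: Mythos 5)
Your argument is correct and actually yields the slightly sharper bound $\mathcal{O}(t^{2N+2})$, but it takes a genuinely different route from the paper's. The paper Taylor-expands $E^t$ directly in the variable $t$ up to order $2N$: it computes $\partial_s^n E^s = E^s\sum_{k=0}^{\lfloor n/2\rfloor}\frac{n!\,s^{n-2k}}{2^k k!(n-2k)!}\pgen{2}^{n-k}$ by the chain rule, observes that all odd-order derivatives vanish at $s=0$ so that the degree-$2N$ Taylor polynomial collapses to exactly $\sum_{n=0}^{N}\frac{1}{n!}\big(\tfrac{t^2}{2}\pgen{2}\big)^n u$, and then bounds the integral remainder using contractivity of $E^s$. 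You instead substitute $s=t^2/2$ into Proposition~\ref{truncatedtaylor} applied to the Smoluchowski semigroup with generator $\pgen{2}$, which packages the same Taylor-plus-contraction mechanism and avoids the combinatorial derivative formula entirely. Your route also demands less regularity: your remainder involves only $\pgen{2}^{N+1}u$, whereas the paper's remainder operator $A_{2N+1}$ contains powers of $\pgen{2}$ up to $2N+1$, so the paper's own proof implicitly uses strictly more than the stated hypothesis $u\in\W{N}{2}{\Qspace}$. The hypothesis mismatch you flag is therefore real, is shared by (and is in fact worse in) the paper's argument, and your resolution --- requiring $\pgen{2}^{N+1}u\in\LfQ{2}(\Qspace)$ --- is the natural fix; for the case $N=1$ used in Corollary~\ref{exprestorederror} this just means additionally assuming $\pgen{2}^2u\in\LfQ{2}(\Qspace)$. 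In short: correct, cleaner, and marginally stronger than the paper's proof, at the cost of relying on Proposition~\ref{truncatedtaylor} being applicable to $\ops{s}$ on the weighted space, which you rightly justify via Corollaries~\ref{cor:opcontraction} and~\ref{g2smolu}.
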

The proof can be found in Appendix~\ref{proofs}.

\begin{corollary}\label{exprestorederror}
Let $u\in\W{1}{2}{\Qspace}$. Then
$$
\big\|E^tu - S^tu\big\|_{2,\muQ} = \mathcal{O}(t^3) 
$$
for $t\rightarrow 0$.
\end{corollary}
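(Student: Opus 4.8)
The plan is to compare the two approximations $E^t u$ and $S^t u$ to a common third object, namely the Taylor polynomial of $S^t$ up to order $3$, and then invoke the triangle inequality together with the two error estimates already established. Concretely, write
\[
\bigl\|E^t u - S^t u\bigr\|_{2,\muQ} \le \Bigl\|E^t u - \bigl(u + \tfrac{t^2}{2}\pgen{2}u\bigr)\Bigr\|_{2,\muQ} + \Bigl\|\bigl(u + \tfrac{t^2}{2}\pgen{2}u\bigr) - S^t u\Bigr\|_{2,\muQ}.
\]
The first summand is controlled by Lemma~\ref{experror} applied with $N=1$: since $u\in\W{1}{2}{\Qspace}$, it equals $\varepsilon(t) = \mathcal{O}(t^{3})$. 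For the second summand one uses Theorem~\ref{spatialtaylor} with $K=3$, which gives $\|S^t u - \sum_{k=0}^{3}\frac{t^k}{k!}\pgen{k}u\|_{2,\muQ} = \mathcal{O}(t^{4})$; combined with $\pgen{1}=0$ and $\pgen{3}=-\gamma\pgen{2}$ from Proposition~\ref{gens}, the partial sum $\sum_{k=0}^{3}\frac{t^k}{k!}\pgen{k}u$ differs from $u + \frac{t^2}{2}\pgen{2}u$ only by the term $\frac{t^3}{6}\pgen{3}u = -\frac{\gamma t^3}{6}\pgen{2}u$, which is itself $\mathcal{O}(t^3)$ in $\LfQ{2}(\Qspace)$ because $\pgen{2}u\in\LfQ{2}(\Qspace)$ by the definition of $\W{1}{2}{\Qspace}$. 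Hence the second summand is $\mathcal{O}(t^3) + \mathcal{O}(t^4) = \mathcal{O}(t^3)$, and adding the two contributions yields the claim.

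The only mild subtlety is bookkeeping of the function spaces: I must check that $u\in\W{1}{2}{\Qspace}$ genuinely licenses both cited results. For Lemma~\ref{experror} with $N=1$ the hypothesis is exactly $u\in\W{1}{2}{\Qspace}$, so nothing is needed there. For Theorem~\ref{spatialtaylor} with $K=3$ the hypothesis as stated is $u\in\W{1}{3}{\Qspace}$, which asks for $\C{6}{\Qspace}$ regularity and $\pgen{k}u\in\LfQ{2}(\Qspace)$ for $k\le 3$; one has to note that on the domain where $\pgen{3}=-\gamma\pgen{2}$ (Proposition~\ref{gens}) the condition $\pgen{3}u\in\LfQ{2}(\Qspace)$ is implied by $\pgen{2}u\in\LfQ{2}(\Qspace)$, so that membership in $\W{1}{2}{\Qspace}$ together with the explicit low-order forms of the pseudo generators suffices in practice. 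Alternatively, and more cleanly, one can bypass Theorem~\ref{spatialtaylor} for $K=3$ and instead apply it only with $K=2$, getting $\|S^t u - (u + \frac{t^2}{2}\pgen{2}u)\|_{2,\muQ} = \mathcal{O}(t^3)$ directly under $u\in\W{1}{2}{\Qspace}$; this is in fact the shorter route and avoids the $\pgen{3}$ term entirely.

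I do not expect a genuine obstacle here — the statement is a corollary in the literal sense, obtained by splicing Lemma~\ref{experror} (for the exponential side) and Theorem~\ref{spatialtaylor} (for the spatial side) around the common second-order Taylor polynomial. If anything, the ``hard part'' is purely presentational: making sure the rescaling $t\mapsto t^2/2$ built into the definition of $E^t$ is tracked consistently, so that the leading correction to $u$ is $\frac{t^2}{2}\pgen{2}u$ on \emph{both} sides and the first genuine discrepancy appears only at order $t^3$. Once that alignment is made explicit, the triangle inequality closes the argument in two lines.
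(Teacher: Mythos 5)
Your proposal is correct and, in its streamlined form (triangle inequality around the common second-order Taylor polynomial $u+\tfrac{t^2}{2}\pgen{2}u$, using Lemma~\ref{experror} with $N=1$ and Theorem~\ref{spatialtaylor} with $K=2$), is exactly the paper's proof. The initial detour through $K=3$ is unnecessary, as you yourself note at the end.
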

\begin{proof}
\begin{align*}
\big\|E^tu - S^tu\big\|_{2,\muQ} &\leq \Big\|E^tu - \big(\sum_{n=0}^2\frac{t^n}{n!}\pgen{k}\big)u\Big\|_{2,\muQ} + \Big\|\big(\sum_{n=0}^2\frac{t^n}{n!}\pgen{n}\big)u - S^tu\Big\|_{2,\muQ}\\
&=\Big\|E^tu - \sum_{n=0}^1 \frac{\big(\frac{t^2}{2}\pgen{2}\big)^n}{n!}u\Big\|_{2,\muQ} + \Big\|\big(\sum_{n=0}^2\frac{t^n}{n!}\pgen{n}\big)u - S^tu\Big\|_{2,\muQ}.	
\end{align*}
Both summands are $\mathcal{O}(t^3)$, the first due to Lemma \ref{experror}, the second due to Theorem \ref{spatialtaylor}.
\end{proof}

\begin{remark}
\quad
\begin{enumerate}
\item An approximation order of $\mathcal{O}(t^4)$ can be achieved by including $\pgen{3}$ into $E^t$, i.e. setting
$$
E^t_3:=\exp\left(\frac{t^2}{2}\pgen{2}+\frac{t^3}{6}\pgen{3}\right) := \lim_{\lambda\to\infty}\exp\left(\left(\frac{t^2}{2}-\gamma\frac{t^3}{6}\right)\pgen{2}^{\lambda}\right).
$$
$E_3^t$ again is a time-rescaled transfer operator of the Smoluchowski dynamics. However, it is not contractive for all $t$ as $\sigma(\pgen{2})\subset(-\infty,0]$ and $\frac{t^2}{2}-\gamma\frac{t^3}{6}\rightarrow -\infty$ for $t\rightarrow \infty$. We thus stick to the lower-order approximation to retain the correct qualitative behavior for $t\rightarrow \infty$.
\item In contrast to the operator~$R^t$, which is only defined on the domain of the associated pseudo generators, the operator~$E^t$ can be defined for every~$u\in \LfQ{k}(\Qspace)$. We conjecture that Corollary~\ref{exprestorederror} holds also for this class of functions, although our proof is not extendable to this case---it uses the Taylor reconstruction to estimate the error. More advanced techniques from semigroup theory are needed, hence this will be subject of future studies.
\end{enumerate}
\end{remark}

\paragraph{Reconstruction of eigenspaces}
The error asymptotics carries over to the spectrum and eigenvectors of $S^t$, $R^t$ and $E^t$ in the following way:

\begin{corollary} \label{taylorspectralerror}
Let $u$ be an eigenvector of $R^t$ or $E^t$ to eigenvalue $\lambda$. Then $u\in\W{1}{2}{\Qspace}$ and
\begin{align*}
\big\|S^tu-\lambda u\big\|_{2,\muQ} &= \mathcal{O}(t^4)\quad \text{for } R^t,\\
\big\|S^tu-\lambda u\big\|_{2,\muQ} &= \mathcal{O}(t^3)\quad \text{for } E^t.
\end{align*}
\end{corollary}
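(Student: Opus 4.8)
The plan is, in both cases, to recognise an eigenvector of $R^t$ or of $E^t$ as an eigenfunction of the single second-order elliptic operator $\pgen{2}$, to deduce the required smoothness from this, and then to invoke the approximation bounds already established. \emph{Reduction to $\pgen{2}$.} Since we are interested in the limit $t\to0$, we may assume $0<t<3/\gamma$, so that the scalar $s_t:=\frac{t^2}{2}-\gamma\frac{t^3}{6}$ is nonzero. By Proposition~\ref{gens} and~\eqref{taylorrestoredoperator}, $R^t=\operatorname{id}+s_t\,\pgen{2}$, so $R^tu=\lambda u$ is equivalent to $\pgen{2}u=\mu u$ with $\mu=(\lambda-1)/s_t$; thus, for each such $t$, the eigenvectors of $R^t$ are precisely the eigenfunctions of $\pgen{2}$. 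For $E^t$, Proposition~\ref{prop:ScaledSmol} identifies $E^t=\ops{t^2/2}$ as a time-reparametrised $C_0$ contraction semigroup whose generator is $\pgen{2}$ (Corollary~\ref{g2smolu}); the Spectral Mapping Theorem~\ref{spectralmappingtheorem} then shows that an eigenvector $u$ of $E^t$ to an eigenvalue $\lambda\neq0$ (only such being relevant for metastability) is again an eigenfunction of $\pgen{2}$, to a $\mu$ with $\lambda=e^{(t^2/2)\mu}$.

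\emph{Regularity.} In either case $u$ solves $\pgen{2}u=\mu u$, i.e.\ $\frac1\beta\Delta u=\nabla V\cdot\nabla u+\mu u$; as $V$ is assumed smooth, this is an elliptic equation with smooth coefficients, and interior elliptic regularity together with a bootstrap argument (the one carried out in Appendix~\ref{sec:smoothness}) gives $u\in\C{\infty}{\Qspace}$, in particular $u\in\C{4}{\Qspace}$, the highest differentiability required in $\W{1}{2}{\Qspace}$. Since moreover $u\in\LfQ{2}(\Qspace)$ by hypothesis, $\pgen{1}u=0\in\LfQ{2}(\Qspace)$ by Proposition~\ref{gens}, and $\pgen{2}u=\mu u\in\LfQ{2}(\Qspace)$, every condition defining $\W{1}{2}{\Qspace}$ is met and $u\in\W{1}{2}{\Qspace}$.

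\emph{Conclusion and main obstacle.} It then suffices to rewrite $S^tu-\lambda u$ and to quote the error estimates. For $R^t$, $\lambda u=R^tu$ gives $S^tu-\lambda u=S^tu-R^tu$, and Corollary~\ref{taylorrestorederror} (applicable because $u\in\W{1}{2}{\Qspace}$) yields $\|S^tu-\lambda u\|_{2,\muQ}=\mathcal{O}(t^4)$. Likewise $\lambda u=E^tu$ gives $S^tu-\lambda u=S^tu-E^tu$, and Corollary~\ref{exprestorederror} yields $\mathcal{O}(t^3)$. The only step beyond bookkeeping is the regularity claim $u\in\W{1}{2}{\Qspace}$: a priori an eigenvector of $E^t$ lies merely in $\LfQ{2}(\Qspace)$ and one of $R^t$ merely in $\Dom{\pgen{2}}\cap\Dom{\pgen{3}}$, neither of which is visibly contained in $\C{4}{\Qspace}$. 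Closing this gap requires both the spectral mapping theorem (to pass from the semigroup $E^t$ to its generator) and elliptic bootstrap regularity for $\pgen{2}$; both are available — the latter essentially being the content of Appendix~\ref{sec:smoothness} — so no new analysis is needed, but these are the ingredients one must be careful to include.
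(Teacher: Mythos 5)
Your proposal is correct and follows essentially the route the paper intends (the corollary is stated without an explicit proof, but the surrounding text — the remark that eigenvectors of $R^t$ and $E^t$ coincide with those of $\pgen{2}$ by construction, the smoothness results of Section~\ref{sec:regularityeigenfunctions} and Appendix~\ref{sec:smoothness}, and Corollaries~\ref{taylorrestorederror} and~\ref{exprestorederror} — supplies exactly the ingredients you assemble). The only cosmetic difference is that you attribute the regularity of the $\pgen{2}$-eigenfunctions to an elliptic bootstrap, whereas Appendix~\ref{sec:smoothness} derives it from Malliavin-type bounds on the transition density; both yield the needed $\mathcal{C}^{\infty}$ conclusion, and your identification of $u\in\W{1}{2}{\Qspace}$ as the one nontrivial step is exactly right.
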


Thus, for small~$t$ we interpret the eigenpair $(u,\lambda)$ of~$R^t$ or~$E^t$ as a good approximation to an eigenpair of~$S^t$.

\section{Numerical experiments}\label{sec:experiments}

We now show how to numerically exploit the newly-developed approximations to $S^t$ on two simple examples. We will see that the simple structure of our pseudogenerators (up to $\pgen{3}$) allows for cheap evaluation if the right discretization techniques are used. For small lag times $t$, the approximated operators can be used for accurate metastability analysis and the expected convergence rates hold.

In this setting, the discretization technique of choice are spectral collocation methods, since they are known to converge faster than any polynomial order, provided the underlying objects are (infinitely) smooth~\cite{Tre00}.

\subsection{Regularity of the eigenfunctions}\label{sec:regularityeigenfunctions}

Under sufficient regularity assumptions on the Langevin SDE, the eigenfunctions of the associated spatial transfer operator~$S^t$ and of the associated Smoluchowski dynamics~$P_{\rm Smol}^t$ (hence of~$G_2$ too) can be shown to be $\mathcal{C}^{\infty}$, i.e.\ smooth. This allows for an extremely efficient approximation by spectral methods. All one needs is the following assumption.
\begin{assumption}	\label{assu:smoothness}
Let the potential $V:\mathbb{R}^d\to\mathbb{R}$ be $\mathcal{C}^{\infty}$ and let all its derivatives of order greater or equal two be bounded.
\end{assumption}
The proof of smoothness touches upon techniques beyond the scope of this manuscript, hence we deferred it to Appendix~\ref{sec:smoothness}.

\subsection{Spectral collocation}\label{sec:spectralcollocation}

All our operators will be discretized and compared using spectral collocation methods. We refer to \cite{Fro13}, where these methods have been applied to transfer operator semigroups, i.e.\ the case where a ``true'' infinitesimal generator exists.  Note that in contrast to \cite{Web12}, we do not need to compute boundary integrals on high dimensional domains.

To avoid dealing with boundary conditions, we restrict ourselves to periodic position spaces. For a more general introduction see, for example,~\cite{Tre00}.

For $\Qspace=\mathbb{T}^1$, the $1$-dimensional unit circle, define for $n$ odd the finite dimensional approximation space $\mathcal{U}_n$ of trigonometric polynomials with basis of Fourier modes
\begin{equation}\label{fourierbasis}
\big\{\phi_k\big\}_{-\frac{n-1}{2}\leq k\leq \frac{n-1}{2}}~,\quad \phi_k(q)=e^{2i\pi kq},
\end{equation} 
as well as the corresponding collocation nodes $\Qspace_n:=\{0,1/n,\ldots,(n-1)/n\}$.\\
with fitting collocation nodes $\Qspace_n:=\big\{\cos\big((2k-1)\pi/(2n)\big), k=0,\ldots,n-1\big\}$.\\
Multi-dimensional phase spaces, consisting of cartesian products of $\mathbb{T}^1$, can be discretized using a corresponding product basis.

Let $I_n:\mathcal{U}\rightarrow \mathcal{U}_n$ be the corresponding projection from some function space $\mathcal{U}$ (in our case $\LfQ{2}(\Qspace)$) to $\mathcal{U}_n$. For an operator $A:\mathcal{U}\rightarrow\mathcal{U}$, we then define the discretized operator $A_n:\mathcal{U}_n\rightarrow\mathcal{U}_n$ by
\begin{equation}\label{discretizedoperator}
A_n:\mathcal{U}_n\rightarrow\mathcal{U}_n,\quad A_nf:=I_nAf.
\end{equation}
The operator $A_n$ has a matrix representation which, for the sake of notational simplicity, is also denoted by $A_n$:
$$
A_n= \big(A\phi_i(q_j)\big)_{ij}, 
$$

We are interested in parts of the spectrum and the associated eigenspaces of $A$, for $A=S^t,R^t,E^t$ or $\pgen{2}$. So instead of solving $Av=\lambda v$ on $\mathcal{U}$, we solve $A_nv = v$ on $\mathcal{U}_n$. In matrix form, using the collocation matrix $M_n=\big(\phi_i(q_j)\big)_{ij}$, this becomes a generalized eigenvalue problem on $\mathbb{C}^n$:
\begin{equation}
A_n c = \lambda M_nc \quad \text{or} \quad (A_n-\lambda M_n) c = 0,
\end{equation}
where $\sum_{k=1}^nc_k\phi_k$ then is the approximative eigenfunction of $A$ to eigenvalue $\lambda$ in Laplace space.

Under mild smoothness conditions on the potential $V$, readily fulfilled if Assumption \ref{assu:smoothness} is satisfied, the Fourier and Chebyshev bases fulfill the requirements of Corollaries \ref{taylorrestorederror} and \ref{exprestorederror}, i.e. $\phi_k\in\W{1}{2}{\Qspace},~k=1,\ldots,n$.

\subsection{Ulam's method}

Since even for our simple academic examples, $S^t$ cannot be computed analytically, a reference method is needed with which we can compute the eigenvalues and -vectors of $S^t$. For this, we here employ Ulam's method (see e.g. \cite{Del01}).

Let $\{A_1,\ldots,A_n\}$ be a partition of $\Qspace$ into subsets of positive Lebesgue measure\footnote{Typical choices are (uniform) hyperrectangles or Voronoi cells.}. Define the approximation space $\mathcal{U}_n=\operatorname{span}\left(\chi_{A_1},\ldots,\chi_{A_n}\right)$. Now, the discretization of $S^t$, denoted by $S^t_n:\mathcal{U}_n\rightarrow\mathcal{U}_n$, is the Galerkin projection of $S^t$ onto $\mathcal{U}_n$. It has the matrix representation
\begin{align*}
\left(S^t_n\right)_{i,j} &= \frac{1}{\mu(A_j)}\int_{A_j}S^t\chi_{A_i}~d\muQ\\
&= \frac{1}{\mu(A_j)}\int_{\Gamma(A_i)} \fcan(q,p) p\big(t,(q,p),\Gamma(A_j)\big)~d(q,p),
\end{align*}
with $p(\cdot,\cdot,\cdot)$ the stochastic transition function of Langevin dynamics (\ref{transitionprobability}).

This integral can be computed numerically by Monte Carlo quadrature, which involves sampling~$\Gamma(A_i)$, numerically integrating the Langevin equations for time $t$, and counting transitions to~$\Gamma(A_j)$. 

%

\subsection{Example: double well potential}

To accurately test the approximation quality of $R^t$ and $E^t$ to $S^t$, we first analyze a simple one-dimensional Langevin system on the unit circle, which can be discretized to high resolution. It has the periodic potential
$$
V(q) = 1 + 3\cos(2\pi q) + 3\cos^2(2\pi q) - \cos^3(2\pi q).
$$

\begin{figure}[h]
\centering
\includegraphics{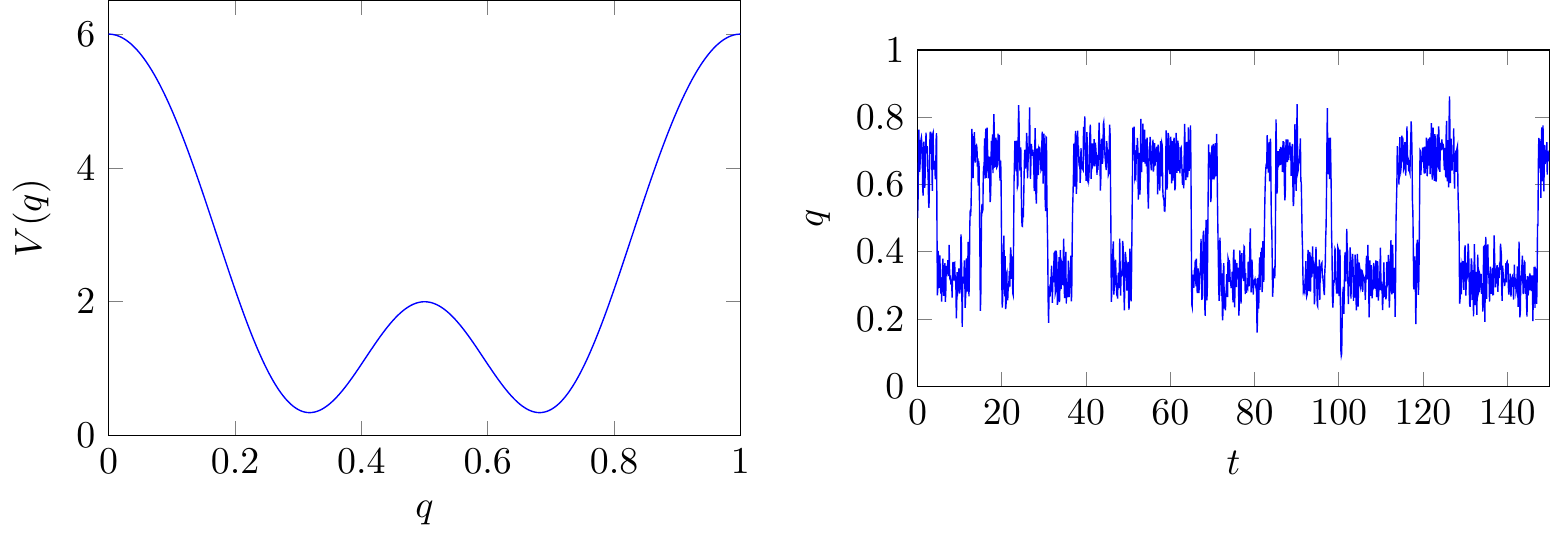}
\caption{The two wells of the periodic double well potential indicate two metastable regions in configuration space. A trajectory of the according Langevin dynamics with appropriate temperature shows the characteristic jumping pattern between wells.}
\end{figure}

We consider the system at inverse temperature $\beta=1$ and Langevin damping constant $\gamma=1$. As even for this simple system neither eigenvalues nor -vectors can be computed analytically, we first compute a classical Ulam approximation to $S^t$ with a large number of discretization boxes $N$ and sampling points $M$, denoted by $S^t_N$. Spectra and eigenvectors of $S^t_N$ then serve as a reference point for the error analysis. A resolution of $N=2^{10}$ and $M=10^4$ sampling points produce sufficiently accurate spectral data, as a further increase does not alter the results considerably.

As our goal is metastability analysis, we analyze the error in the portion of the spectrum and the eigenvectors that are necessary to identify almost invariant sets. Table \ref{tab:eigenvaluesdoublewell} shows a distinct \emph{spectral gap} after the second eigenvalue, so analysing $\lambda_1,\lambda_2$ and $v_1, v_2$ should reveal the principal metastable sets.

\begin{figure}[H]
	\centering
	
	\begin{tabular}{r | l l l l l l l}
	EV \# & 1& 2& 3& 4& 5\\
	\hline
	$t=0.1$ & 1.0000 & 0.9428 & 0.4324 & 0.3139 & 0.2022 \\
	$t=1$ & 1.0000 & 0.6620 & 0.1775 & 0.0515 & 0.0401 
	\end{tabular}
	\caption{The five largest eigenvalues of $S^t_N$ for different lag times $t$. Note the spectral gap after $\lambda_2$.}
	\label{tab:eigenvaluesdoublewell}
\end{figure}

For the discretized approximative operators $\pgen{2,n}$, $R^t_n$ and $E^t_n$, we use $n=33$ approximation functions of form (\ref{fourierbasis}) and the same number of collocation points. The fourier modes have inherent periodic boundary conditions.

\paragraph{Eigenvalue comparison}

The absolute error in the relevant eigenvalues can be measured by
\begin{align*}
	\varepsilon_R(t) &:= \big|\lambda_1(S^t_N)-\lambda_1(R^t_n)\big| + \big|\lambda_2(S^t_N)-\lambda_2(R^t_n)\big|\\
	\varepsilon_E(t) &:= \big|\lambda_1(S^t_N)-\lambda_1(E^t_n)\big| + \big|\lambda_2(S^t_N)-\lambda_2(E^t_n)\big|,
\end{align*}
where $\lambda_k(S^t_N),\lambda_k(R^t_n),\lambda_k(E^t_n)$ is the $k$-the eigenvalue of $S_N,R_n,E_n$, respectively.

\begin{figure}[H]
\centering
\includegraphics{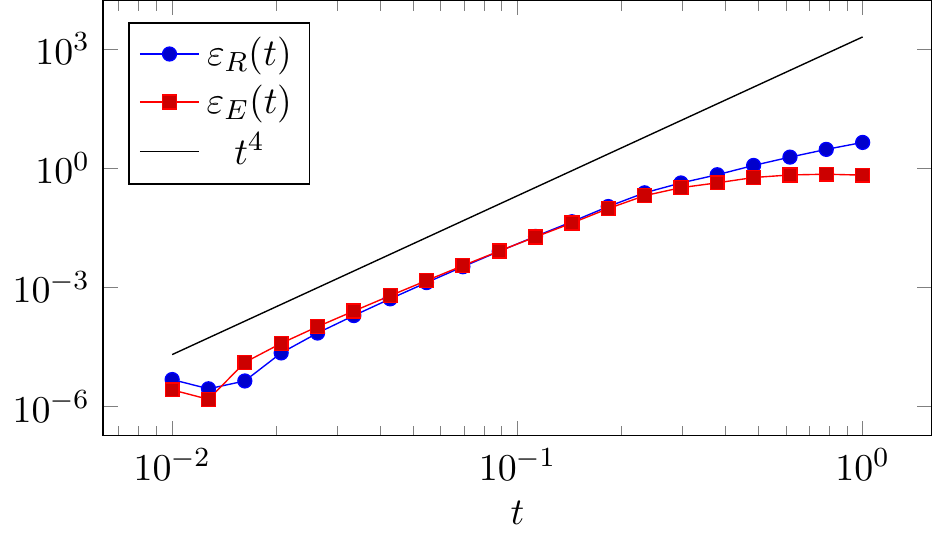}
		\caption{Eigenvalue errors for the Taylor and Exponential approximation for small $t$. $\varepsilon_R(t)$ is consistent with the estimated convergence rate $\mathcal{O}(t^4)$. $\varepsilon_E(t)$ even (visually) exceeds the predicted convergence rate of $\mathcal{O}(t^3)$.}
		\label{1derror}
\end{figure}

In Figure \ref{doublewellevals} the 8 largest eigenvalues for increasing lag times are shown. We see that for small $t$, a decent approximation of the eigenvalues of $S^t_N$ can be expected from both $R^t_n$ and $E^t_n$. However, for bigger $t$, the third-order Taylor approximation $R^t$ becomes worse, while $E^t$ at least shows the proper qualitative behavior. Added for comparison, the Smoluchowski spectrum, representing a completely different dynamics, does not resemble the spectrum of $S^t_N$.

\begin{figure}[H]
\centering
\includegraphics{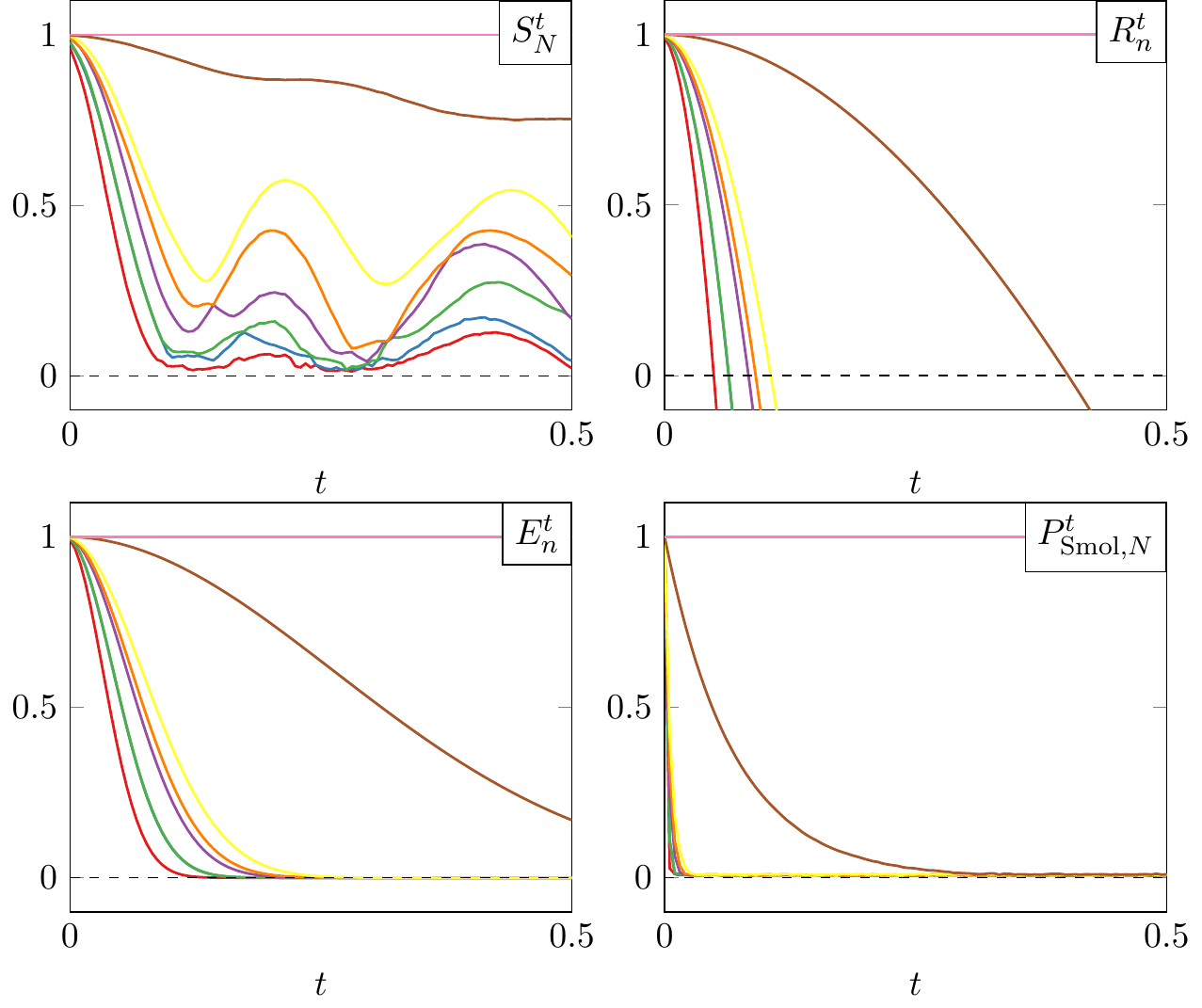}
	\caption{The solid lines show the course of the 8 biggest eigenvalues of the various discretized operators. For small lag times, both the spectra of $R^t$ and $E^t$ show the right asymptotics, while for $t\rightarrow \infty$, $E^t$ at least has the qualitative behavior. The spectrum of the Smoluchowski transfer operator compares poorly to that of $S^t_N$.}
	\label{doublewellevals}
\end{figure}

\paragraph{Eigenvector comparison}

Before comparing to the approximated operators, observe that the eigenvectors of $S^t$ (and $S^t_N$) are time-dependent. This is contrastive to any semi-group transfer operator, whose eigenvectors coincide with those of its infinitesimal generator for all times. While the first eigenvector remains constant $v_1\equiv 1$, the second eigenvector starts out as almost a step function, but gets more concentrated in the potential wells for increased lag times (Figure \ref{ev_timedependence}).

The eigenvectors of $R^t$ and $E^t$, however, are time-invariant, and coincide with those of $\pgen{2}$, by construction. For small lag times, the second eigenvector $w_2$ of $\pgen{2,n}$ (and thus $R^t_n$ and $E^t_n$) compares well to the second eigenvector $v_2$ of $S^t_N$ (see Figure \ref{1derror}). For larger $t$, when the $v_2$ becomes more and more concentrated at the potential wells, the difference is more noticable.

\begin{figure}[H]
\centering
\includegraphics{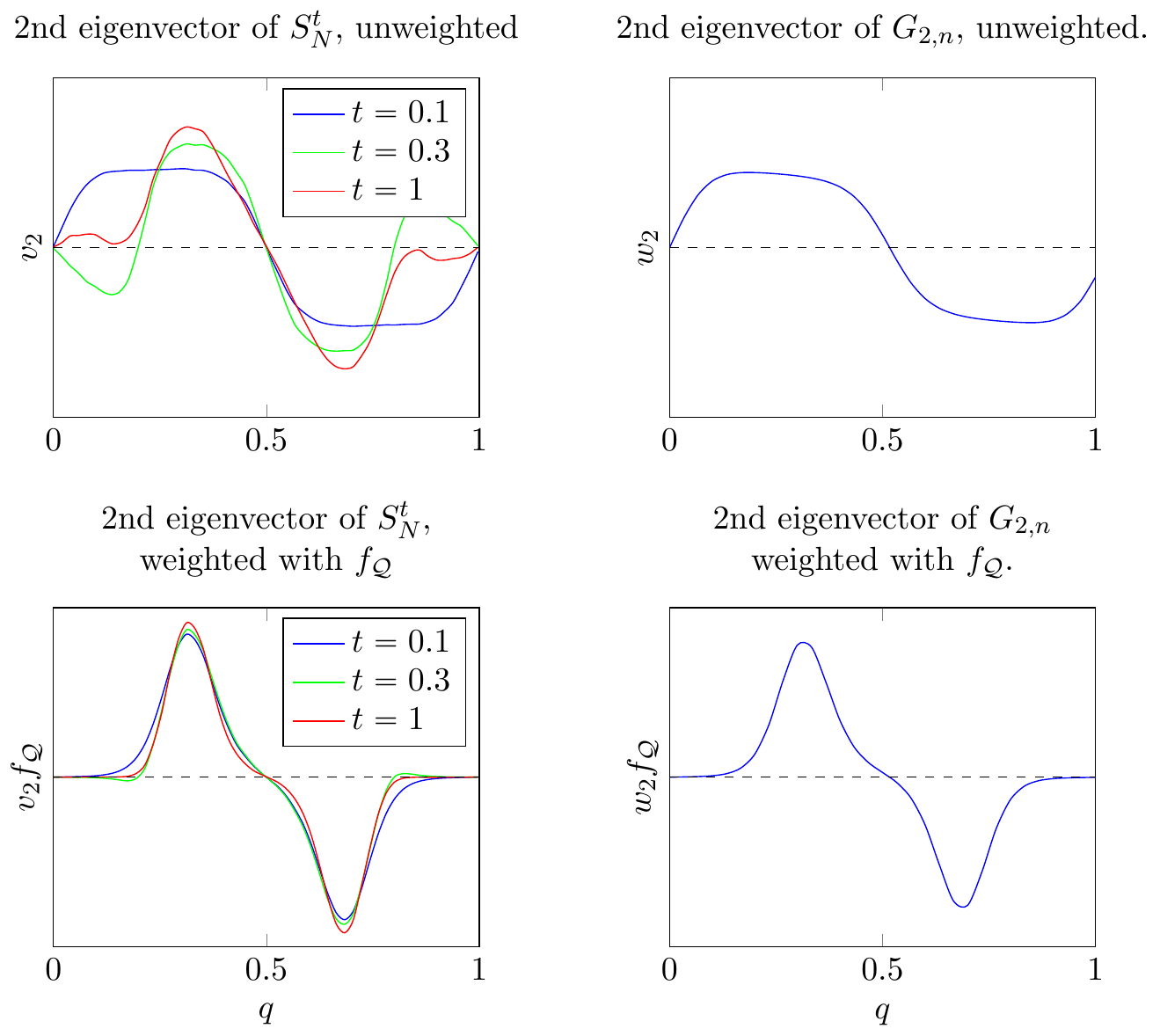}
	\caption{Visual comparison of the second largest eigenvectors of $S^t_N$ for short and intermediate lag times $t$ (left column), and of $\pgen{2,n}$ (right column). For $S^t_N$, we see strong time-dependency in the unweighted case.}
	\label{ev_timedependence}
\end{figure}

As we consider all spatial operators on $\LfQ{2}(\Qspace)$, their physical interpretation is to transport densities with respect to $\fQ$ (see also the next paragraph). It is therefore appropriate to weight their eigenvectors with $\fQ$, as this gives their representation with respect to the Lebesgue measure. In the weighted space, the time-dependence of $v_2$ becomes insignificant, as can be seen in Figure \ref{ev_timedependence}. Consequently, our restoration provides a very good approximation (Figure~\ref{ev_timedependence}). 

The sign structure of $w_2$ now identifies the pair of metastable sets 
$$A_1=\{w_2>0\}=(0,0.5),\quad A_2=\{w_2<0\}=(0.5,1).$$

\paragraph{Transition probabilities}

Theorem \ref{metastability} also provides estimates for the transition probabilities between those sets: the combined ``degree of metastability'' $p(t,A_1,A_1)+p(t,A_2,A_2)$ can be bounded from above and below by functions of the second largest eigenvalue $\lambda_2(S^t)$. To verify this numerically, and to examine the approximation quality of bounds based on the eigenvalue $\lambda_2(E^t)$\footnote{The approximation of the bounds based on $R^t$ is omitted here, as the only difference to $E^t$ is the (already demonstrated) rate of decay for $t\rightarrow \infty$.} , we first estimate $p(t,A_i,A_i)$ by Monte Carlo integration:
\begin{enumerate}
\item Sample two sets of starting points, with density $\chi_{A_1} \fQ$ and $\chi_{A_2} \fQ$.
\item Integrate those samples numerically for some fixed time $t$ under the Langevin dynamics.
\item Count the portion of points that remained in $A_1$ and $A_2$, respectively.
\end{enumerate}
For sufficiently many samples, this provides an accurate estimate for $p(t,A_1,A_1)$ and $p(t,A_2,A_2)$.

Figure \ref{metastabilitybounds} (left) confirms the bounds based on $\lambda_2(S^t)$. They do, however, provide a relatively large margin of error, and diverge for increasing lag time\footnote{It can be argued that weighting the initial distribution of points by $|w_2|$ gives a ``more physical'' portion of the canonical density, i.e. $\chi_{A_i} |w_2| \fQ$. The corresponding degree of metastability does, in fact, coincide much better with the upper bound.}.
Figure \ref{metastabilitybounds} (right) now shows that \emph{for small lag times}, the bounds based on $\lambda_2(E^t)$ also contain $p(t,A_1,A_1)+p(t,A_2,A_2)$, and thus are an accurate approximation to the bounds based on $\lambda_2(S^t)$ in this time region.

\begin{figure}[H]
\centering
	\includegraphics{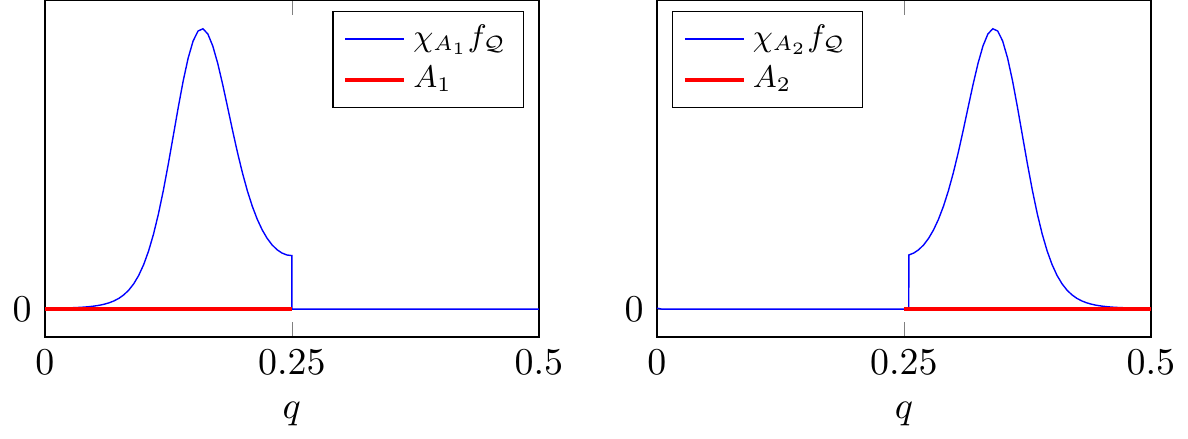}
	\caption{The invariant sets with the corresponding portion of the canonical density.}
\label{ev_sets}
\end{figure}

\begin{figure}[H]
\centering
	\includegraphics{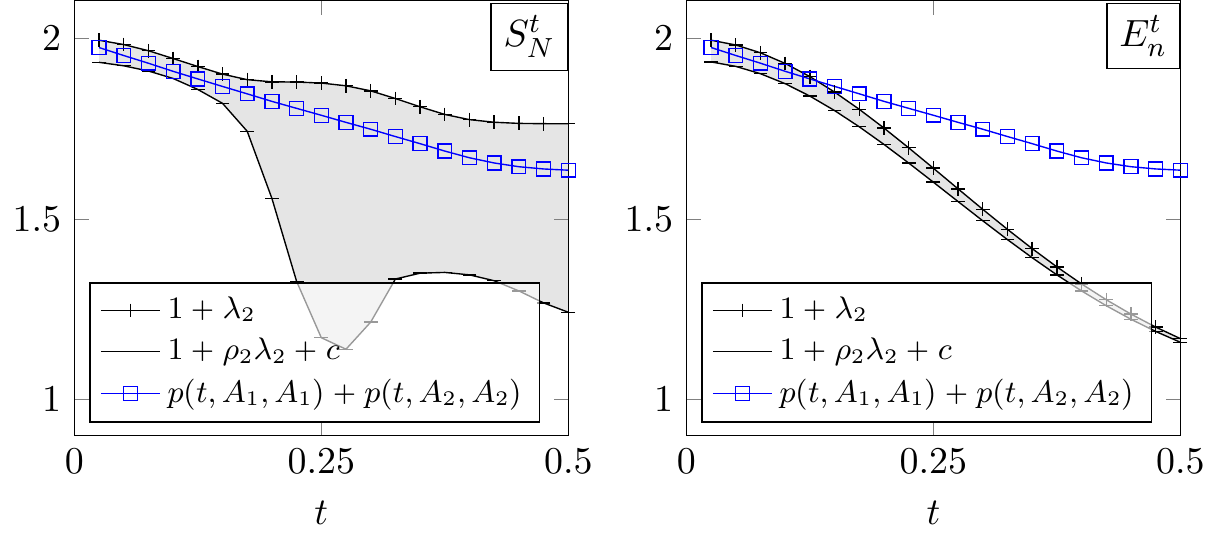}
	\caption{Metastability of the partition $A_1, A_2$ and comparison to the bounds of Theorem \ref{metastability}, calculated based on the second largest eigenvalue of $S^t$ (left) and $E^t$ (right).}
    \label{metastabilitybounds}
\end{figure}

A recent continuation of the pseudo generator theory~\cite{BiHaKoJu15} shows that it may be possible to extend the time scales for which meaningful information can be extracted from the reconstructed operator, i.e. for which $\lambda_2(E^t)$ resembles $\lambda_2(S^t)$. It has been discussed that $S^t$ exhibits a kind of "almost-Markovianity", which results in a basically exponential decay of $\lambda_2(S^t)$ for high enough damping $\gamma$ and large enough $t$. Transferring this decay rate to $\lambda_2(E^t)$, it has been demonstrated that $\lambda_2(S^{k\cdot \tau})\approx \big(\lambda_2(E^\tau)\big)^k$, for the right choice of $\tau$ and $k$ large enough.

\subsection{Example: four well potential}

The discretization and restoration method performs similarly on higher-dimensional domains. Using grid-based spectral collocation, we can reconstruct the spatial transfer operator for a two-dimensional four well potential of the form
\begin{equation}
\begin{aligned}[3]
V(q_1,q_2) &= 1 + 3\cos(2\pi q_1) + 3\cos^2(2\pi q_1) - \cos^3(2\pi q_1) \\
&~~ + 1 + 3\cos(2\pi q_2) + 3\cos^2(2\pi q_2) - \cos^3(2\pi q_2) + \cos(2\pi q_2-\frac{\pi}{3}).
\end{aligned}
\end{equation}

\begin{figure}[H]
	\centering	
	\includegraphics[width=8cm,keepaspectratio]{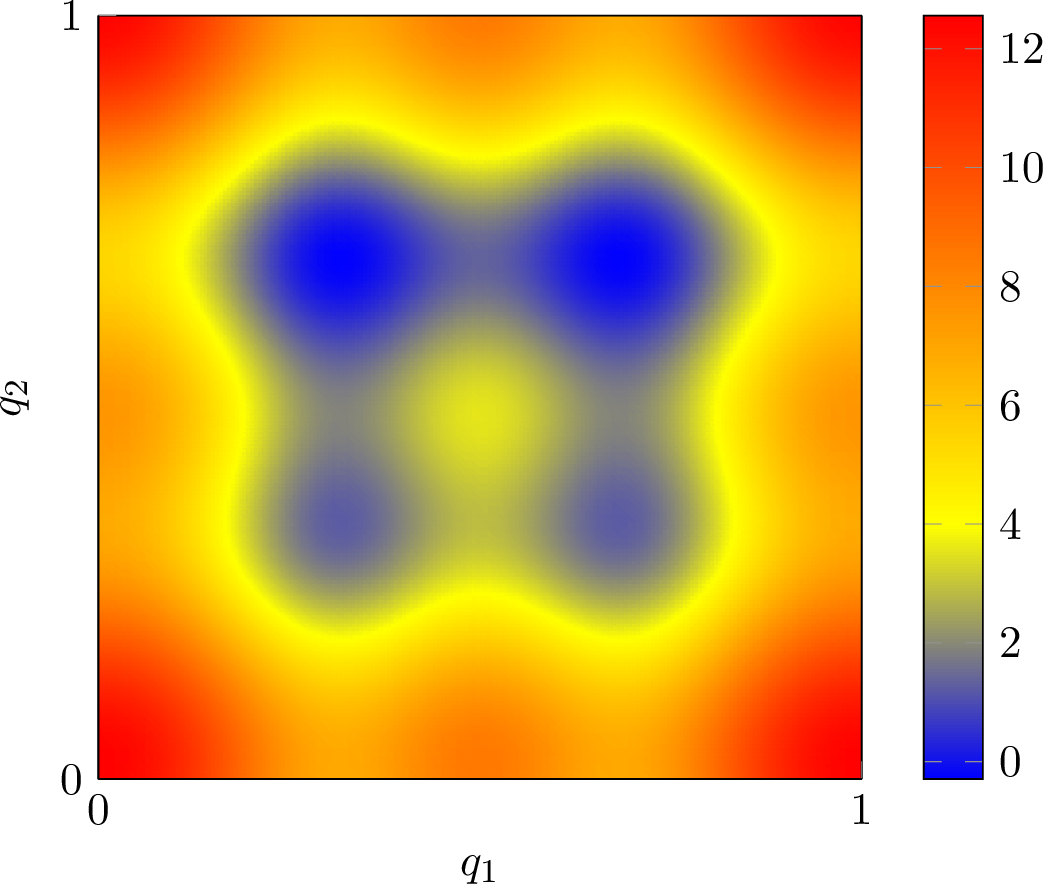}
\caption{The four well potential in the region $[0,1]^2$.}
\end{figure}

This potential is of interest, as the four local minima of different depth form multiple hierarchies of metastable sets. A similar (albeit non-periodic) potential was considered in \cite{Deu96}. Again, the potential is periodic on $\mathbb{T}^2$, so for discretization we use (products of) Fourier modes. As for this example we do not perform rigorous error analysis, a resolution of 33 basis functions and collocation points per dimension is sufficient for both the Ulam and collocation discretizations, resulting in a total of 961 basis functions. We use heat and damping parameters $\beta=1,~\gamma=1$.

The spectrum of the (Ulam-approximated) spatial transfer operator shows a significant gap after the fourth eigenvalue; we thus expect to identify four metastable sets, corresponding to the four potential wells.

\begin{figure}[H]
	\centering
	
	\begin{tabular}{r | l l l l l l l}
	EV \# & 1& 2& 3& 4& 5& 6\\
	\hline
	$t=0.1$ & 0.9974 & 0.9053 & 0.8950 &0.8122 & 0.4063 & 0.3647 \\
	$t=1$ & 0.9873 & 0.7518 & 0.7307 & 0.5569 & 0.2894 & 0.2769 
	\end{tabular}
	\caption{The six largest eigenvalues of $S^t_n$ for short and intermediate lag times $t$. Note the spectral gap after $\lambda_4$.}
\end{figure}

For the significant eigenvectors $v_2,v_3,v_4$ of $S^t_n$ and their approximations $w_2,w_3,w_4$ via $\pgen{2,n}$, we observe a similar behavior as in the one-dimensional case: For longer lag times, the relevant eigenvectors get more and more concentrated in the regions of the potential wells. Again, the sign structure is largely identical (Figure \ref{fig:fourwelleigenvectors}).

\begin{figure}[h]
\centering
	\includegraphics{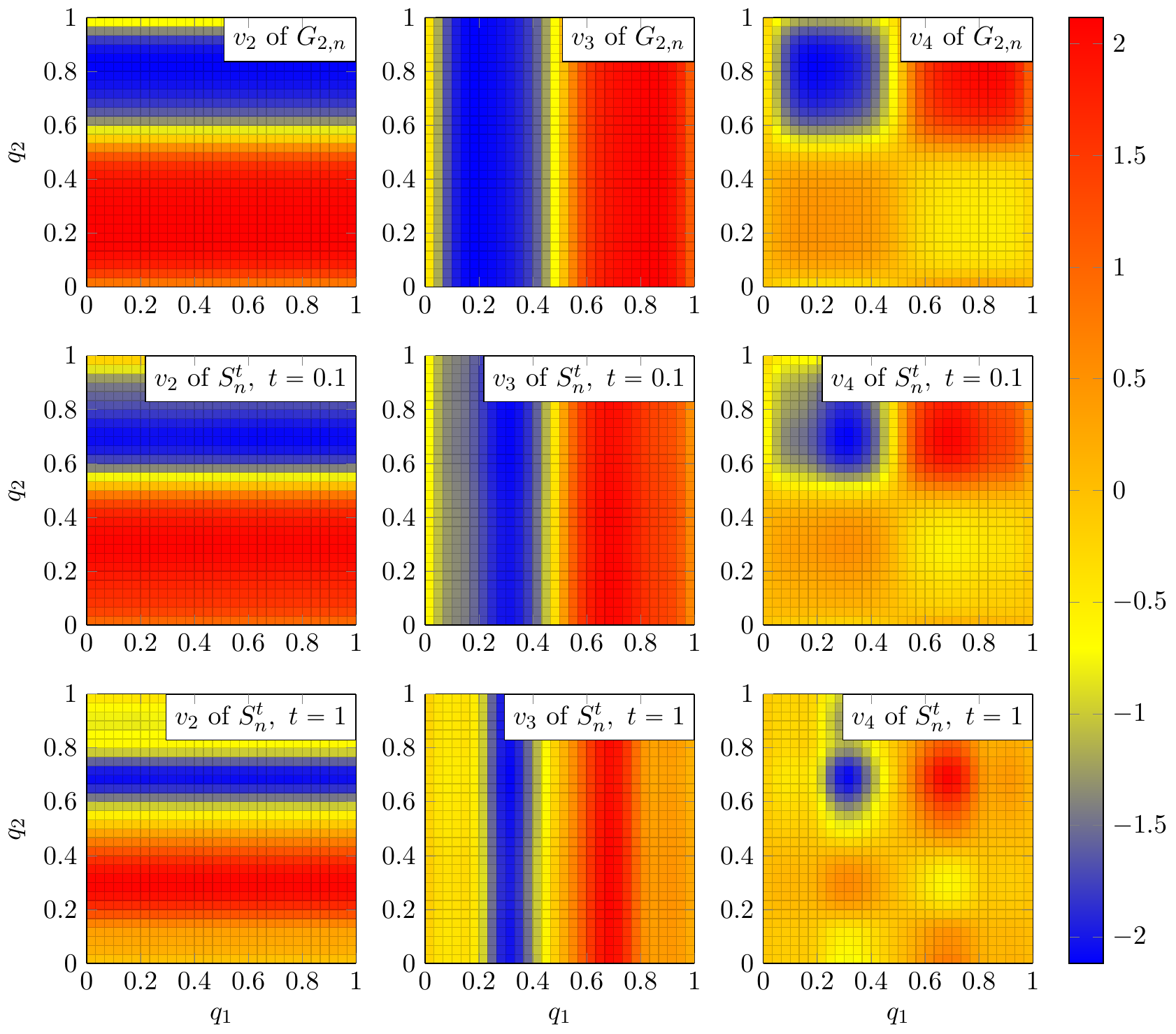}
\caption{The three most significant eigenvectors of $\pgen{2,n}$ (top row) and $S^t_n$ for different lag times (second and third row), unweighted.}
	\label{fig:fourwelleigenvectors}
\end{figure}

\paragraph{Transition probabilities and hierarchies of metastability}

The sign structure of the three isolated eigenvectors partitions $\Qspace$ into three pairs of metastable sets, each with a different ``degree of metastability'' (Figure \ref{fourwellsets}). The portion of $\fQ$ resembling the respective metastable sets now is
$$    f_{A_i} = \chi_{A_i}  \fQ,\qquad
    f_{B_i} = \chi_{B_i}  \fQ,\qquad
    f_{C_i} = \chi_{C_i}  \fQ,\qquad i=1,2.    
$$
Again sampling these densities, integrating and counting the points remaining in the respective sets shows the connection to the dominant eigenvalues of $S^t$ as of Theorem \ref{metastability}. Again, for short times, the bounds based on the spectrum of $E^t_n$ captures the decay of metastability quite well (Figure \ref{metastabilityboundsfourwell}).

\begin{figure}[H]
\centering
	\includegraphics{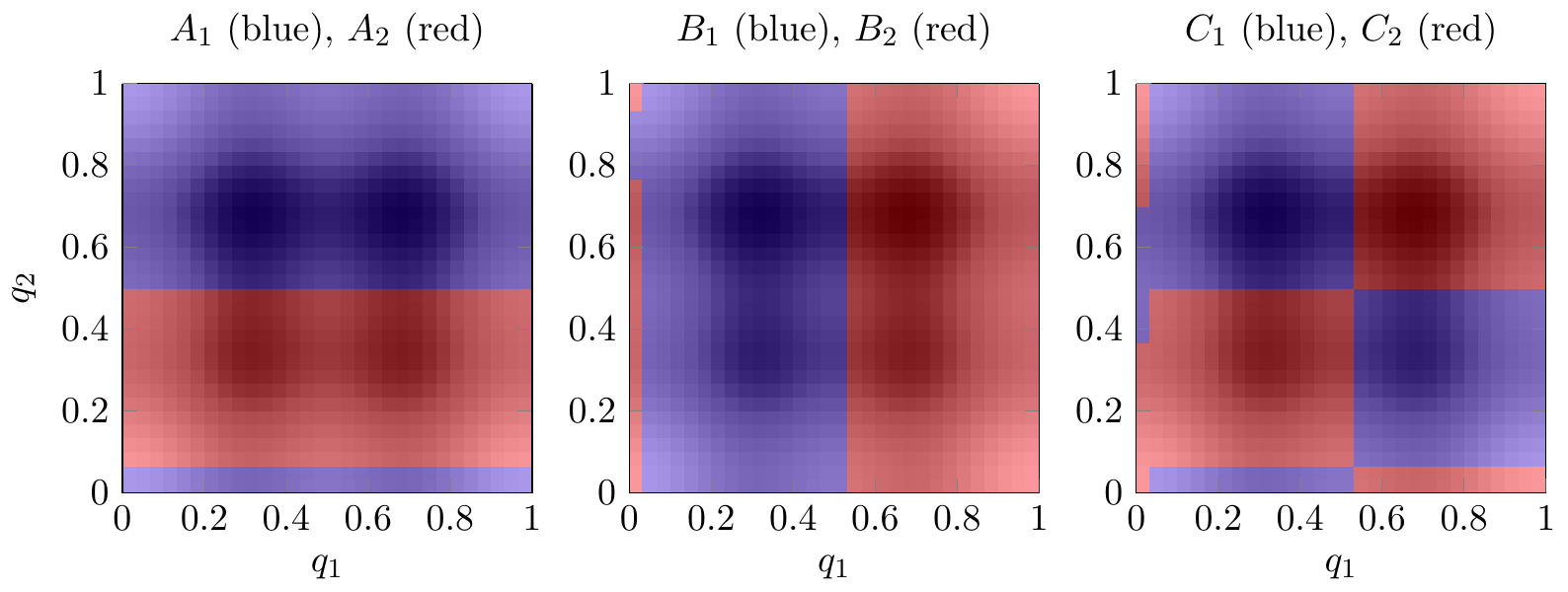}
\caption{The three tiers of invariant sets, identified via $\pgen{2,n}$. The artifacts on the left border of $C_1, C_2$ can be attributed to the ill-conditioned sign structure analysis.}
\label{fourwellsets}
\end{figure}

\begin{figure}[H]
\centering
	\includegraphics{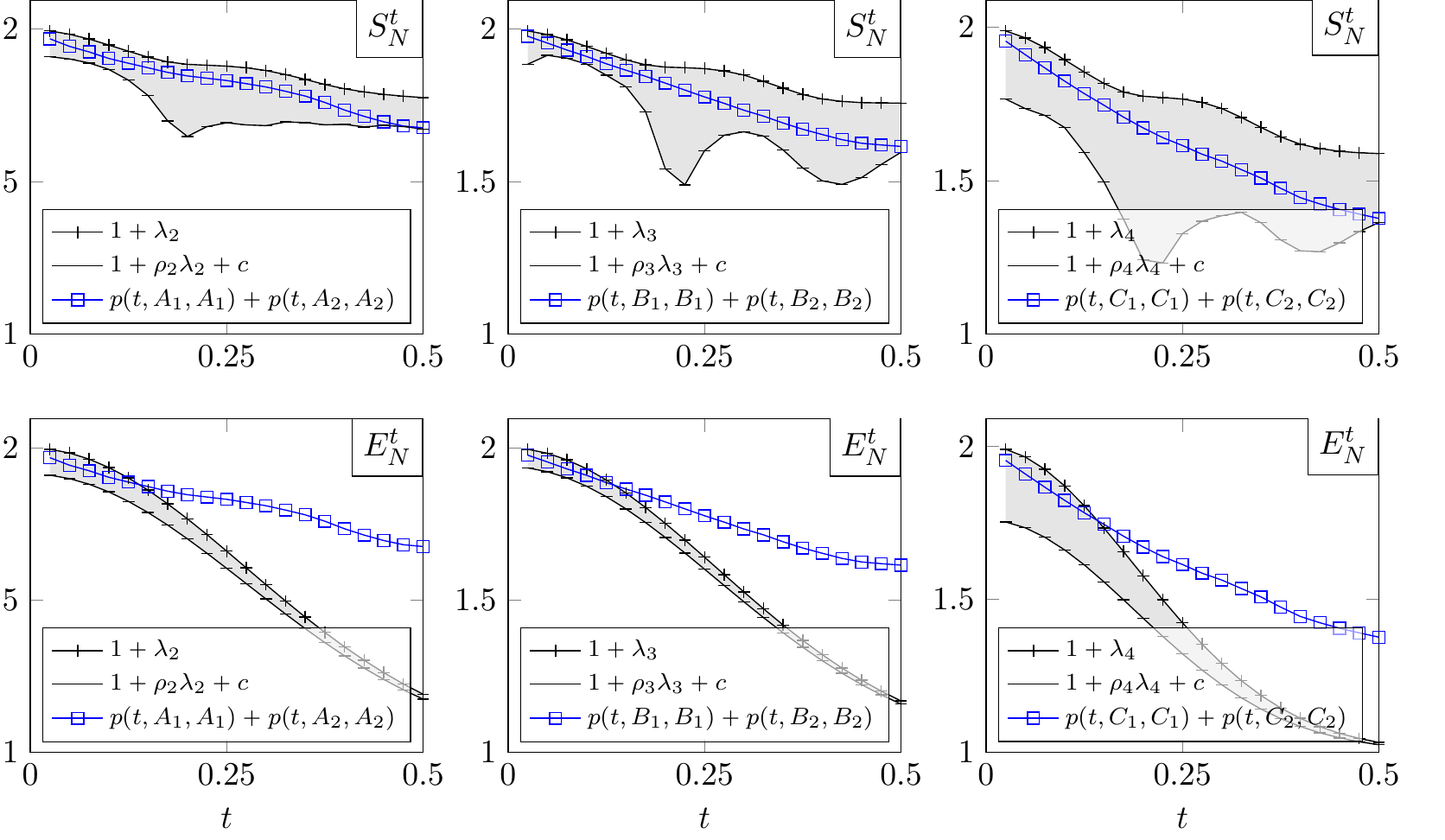}
	\caption{Combined metastability of the identified sets and comparison to the bounds of Theorem \ref{metastability} for both $S^t$ and $E^t$.}
    \label{metastabilityboundsfourwell}
\end{figure}

\section{Conclusion and future work}

We have considered the dynamics of the position coordinate for a molecular dynamics system given by the Langevin process in thermal equilibrium. Following the aim of an efficient, trajectory-free evaluation of the associated \emph{spatial transfer operator}, we have found that the spatial dynamics behaves up to third order in time as a $t\mapsto \tfrac{t^2}{2}$ scaled Smoluchowski dynamics (for $t\to 0$); cf.\ Proposition~\ref{prop:ScaledSmol} and Corollary~\ref{exprestorederror}.

The numerical experiments suggest that our theoretical findings on the asymptotic approximation error can be extended to the dominant spectrum as well, hence that the approach is applicable for metastability analysis.  In order to be applicable to bio-chemically relevant systems, two main points have to be addressed: (a) extension of the approximation quality for larger time scales, and (b) dealing with the numerical approximation for higher dimensional systems. Thus we are going to investigate the following topics in future studies:

$\bullet$ Corollary~\ref{exprestorederror} tells us that a time-scaled Smoluchowski dynamics approximates the spatial dynamics of a Langevin process up to third order in time -- independently of the Langevin damping coefficient~$\gamma$. Of course, a quantitative estimate would depend on the damping, and it is important to understand how the approximation quality behaves for varying damping and larger lag times.

$\bullet$ It is shown in Appendix~\ref{sec:spatial_spectrum} that the spatial dynamics is ergodic, which suggests an exponential decay of the eigenvalues of the spatial transfer operator as $t\to\infty$. Incorporating this qualitative property into the structure of the reconstruction (i.e.\ the approximation of the spatial transfer operator from the pseudo generators) is key to be able to leap to larger lag times and have the time scales of the original system approximated properly.

$\bullet$ The potential, governing the self-dynamics of a molecule, can be formulated in terms of \emph{internal coordinates} of the molecule: bond lengths, bond angles, and dihedral angles. Describing the dynamics of the system in these coordinates maintains its essential dynamical properties, but reduces its dimension. We shall examine how the pseudo generator approach can be transferred into these coordinates, and whether the dimensional reduction pays off against the imposed additional difficulties (e.g.\ the mass matrix, which is a constant diagonal matrix in Cartesian coordinates---taken to be the identity in~\eqref{hamiltondynamics} and \eqref{langevindynamics}---, is in general a position-dependent full matrix in the internal coordinates~\cite{Fri09}).

$\bullet$ The internal coordinates above often play an even more prominent role: in many cases, the dominant conformational transitions can be described by only a few of them; e.g.\ the $\alpha \rightleftharpoons \beta$ transition for alanin dipeptide occurs in the $\phi/\psi$ coordinate plane. Evidently, if a faithful projection of the full Langevin dynamics to these \emph{essential coordinates} can be carried out, this yields a massive dimensionality reduction. Certainly, however, through this projection (which is additional to the already imposed averaging over the momenta) dynamical information is lost, and we expect the approximation quality by the associated pseudo generators to deteriorate.

$\bullet$ Especially in the latter case, the use of higher order pseudo generators could be advantageous. It shall be investigated how to incorporate them into the reconstruction, and assessed whether the increase in accuracy due to increased approximation order is worth the effort to compute higher order derivatives of the potential---as they will probably enter the expressions. However, we have seen in Proposition~\ref{gens} that $\pgen{3} = -\gamma \pgen{2}$, hence no additional potential evaluations are needed. Whether this is a generic pattern for pseudo generators or a singular fluke will be examined in future studies.

$\bullet$ Cases will appear where despite all dimension and model reduction techniques we have to deal with the discretization of operators on function spaces over medium- to high-dimensional spatial domains, which are subject to the curse of dimension.  Following \cite{Web12}, we intend to use meshfree approximation methods where the nodes are distributed according to prior knowledge gained from trajectory data.  The faith in this approach resides in recent results of \emph{transition path theory}~\cite{EVE04,EVE06,EVE10}, which shows that the vast majority of conformational transitions occur along a few dominant, low dimensional transition pathways.

\section*{Acknowledgments}

The authors would like to thank Carsten Hartmann for helpful discussions.

\appendix

\section{Derivation of pseudo generators by vector calculus} \label{proofs}

\begin{proof}[Proof of Proposition~\ref{gens}]

By Lemma \ref{altpseudogen}, we obtain $\pgen{n}$ by calculating $\genl^{n},~n\in\{1,2,3\}$ and applying the momentum integral afterwards. For readability, we use the shorthand~$L$ instead of~$\genl$ for the remainder of this proof. Let $u\in \LfQ{2}(\Qspace)\cap\mathcal{C}^4(\Qspace)$.

1.
$\gen$ is the differential operator of the Fokker--Planck equation (\ref{fokkerplanckequationlangevin}), and therefore
\begin{align*}
\gen\big(u(q)\fcan(q,p)\big)&=\left(\frac{\gamma}{\beta}\Delta_p - p\cdot\nabla_q+\nabla_qV\cdot\nabla_p+\gamma p\cdot\nabla_p+d\gamma\right)\big(u(q)\fcan(q,p)\big)\\
&=\frac{\gamma}{\beta}u(q)\Delta_p\fcan(q,p)\\
&\qquad-p\cdot\big(\nabla_qu(q)f(q,p)+u(q)\nabla_q\fcan(q,p)\big)\\[1ex]
&\qquad+\nabla_qV(q)\cdot\big(u(q)\nabla_p\fcan(q,p)\big)\\[1ex]
&\qquad+\gamma u(q) p\cdot \nabla_p\fcan(q,p)\\[1ex]
&\qquad+ d\gamma u(q)\fcan(q,p).
\intertext{Setting $\fcan(q,p)=\efcan$, this becomes}
	&=\frac{1}{Z}e^{-\beta\left(\frac{p\cdot p}{2}+V(q)\right)}\Big[\frac{\gamma}{\beta}(\beta^2p\cdot p-d\beta)u(q) - \left(p\cdot\nabla_qu(q)-\beta p\cdot\nabla_qV(q)u(q)\right)\\
	&\qquad -\beta p\cdot\nabla_qV(q)u(q) -\beta\gamma p\cdot pu(q) + d\gamma u(q)\Big]\\
	&=-\frac{1}{Z}e^{-\beta\left(\frac{p\cdot p}{2}+V(q)\right)} p\cdot \nabla_qu(q).
\end{align*}
Applying the integral (and normalizing) in (\ref{spatialto}), we get
\begin{align*}
\pgen{1}u(q) &=  \frac{1}{\fQ(q)}\int_\Pspace\partial_tT^t\Big|_{t=0}\big(u(q)\fcan(q,p)\big)dp\\
	&=  \frac{1}{\fQ(q)}\int_\Pspace-\frac{1}{Z}e^{-\beta\left(\frac{p\cdot p}{2}+V(q)\right)} p\cdot \nabla_qu(q)dp\\
	&=-\frac{1}{\fQ(q)}\frac{1}{Z}e^{-\beta V(q)}\left[\int_\Pspace e^{-\beta\frac{p\cdot p}{2}}\left(p\cdot\nabla_qu(q)\right)dp\right]\\
	&=-\frac{1}{\fQ(q)}\frac{1}{Z}e^{-\beta V(q)}\sum_{k=1}^{d}\left[\partial_{q_k}u(q)\int_\Pspace p_ke^{-\beta\frac{p\cdot p}{2}} dp\right].
\intertext{With $\tilde{p}_k=\left(p_1,\ldots,p_{k-1},p_{k+1},\ldots,p_d\right)^\intercal$, $\Pspace_k=\mathbb{R}$ and $\tilde{\Pspace}_k=\mathbb{R}^{d-1}$ this becomes}
	&=-\frac{1}{\fQ(q)}\frac{1}{Z}e^{-\beta V(q)}\sum_{k=1}^d\bigg[\partial_{q_k}u(q)\int_{\tilde{\Pspace}_k}e^{-\beta\frac{\tilde{p}_k\cdot\tilde{p}_k}{2}}d\tilde{p}_k\underbrace{\int_{\Pspace_k}e^{-\beta\frac{p_k^2}{2}}p_kdp_k}_{=0}\bigg].
\end{align*}
The last integral is $0$ due to symmetry, and so
$$
\pgen{1}u(q) = 0,\quad \forall u, q.
$$

2.
Having already derived (see above)
$$
\gen\big(u(q)\fcan(q,p)\big) = -\frac{1}{Z}e^{-\beta\left(\frac{p\cdot p}{2}+V(q)\right)} p\cdot \nabla_qu(q),
$$
we apply $\gen$ a second time:
\begin{align*}
\gen^{2}\big(u(q)\fcan(q,p)\big) &= \gen\left( -\frac{1}{Z}e^{-\beta\left(\frac{p\cdot p}{2}+V(q)\right)} p\cdot \nabla_qu(q)\right) \\
	&= \left(\frac{\gamma}{\beta}\Delta_p - p\cdot\nabla_q+\nabla_qV\cdot\nabla_p+\gamma p\cdot\nabla_p+d\gamma\right)\left( -\frac{1}{Z}e^{-\beta\left(\frac{p\cdot p}{2}+V(q)\right)} p\cdot \nabla_qu(q)\right)\\
	&= \frac{\gamma}{\beta}\efcan \frac{\gamma}{\beta}\Big[p\cdot\nabla_qu(q)(-\beta(d+2)+\beta^2p\cdot p)\\
	&\qquad - \big(-\beta p\cdot\nabla_qV(q) p\cdot\nabla u(q)-\beta p\cdot H_qu(q)\cdot p\big)\\[1ex]
	&\qquad + \big(\nabla_qV(q)\cdot \nabla_q u(q)-\beta(\nabla_qV(q)\cdot p)(p\cdot\nabla_q u(q))\big)\\[1ex]
	&\qquad + \gamma\big(p\cdot\nabla_q u(q)-\beta (p\cdot p)(p\cdot\nabla_q u(q))\big)\\
	&\qquad + d\gamma p\cdot \nabla_q u(q)\Big]\\[1ex]
&=\frac{1}{Z}e^{-\beta\left(\frac{p\cdot p}{2}+V(q)\right)}\big(\gamma p\cdot\nabla_qu(q)+ p\cdot H_qu(q)\cdot p - \nabla_qV(q)\cdot \nabla_qu(q)\big),
\end{align*}
with $H_qu$ the Hessian of $u$. Applying the integral in (\ref{spatialto}) and using the defintion of $\fQ$ and $Z$ gives
\begin{align*}
G_2u(q) &=\Big({\int_\Pspace e^{-\beta\frac{p\cdot p}{2}~dp}}\Big)^{-1}\int_\Pspace e^{-\beta\frac{p\cdot p}{2}}\big(\gamma p\cdot\nabla_qu(q) + \beta p\cdot H_qu(q)\cdot p - \nabla_qV(q)\cdot \nabla_qu(q)\big)~dp\\
&=\Big(\frac{\beta}{2\pi}\Big)^{\frac{d}{2}}\Big[\underbrace{\int_\Pspace\gamma e^{-\beta\frac{p\cdot p}{2}} p\cdot\nabla_qu(q)~dp}_{=0} + \underbrace{\int_\Pspace e^{-\beta\frac{p\cdot p}{2}}p\cdot H_qu(q)\cdot p~dp}_{=\frac{1}{\beta}\left(\frac{2\pi}{\beta}\right)^\frac{d}{2}\Delta_q u(q)} \\
    &\hspace{1.5cm}-\underbrace{\int_\Pspace e^{-\beta\frac{p\cdot p}{2}}\nabla_qV(q)\cdot\nabla_qu(q)~dp}_{=\left(\frac{2\pi}{\beta}\right)\nabla_q u(q) \cdot\nabla_q V(q)}\Big]\\
\intertext{The first integral is $0$ due to symmetry. Expanding the second integral, all but the ``diagonal'' summands are $0$ due to symmetry, so only $\Delta_q$ remains as integral operator. So this finally becomes}
&= \frac{1}{\beta}\Delta_q u(q) - \nabla_q u(q)\cdot\nabla_q V(q)~.
\end{align*}

3.
We apply $\gen$ a third time:
\begin{equation}
\gen^{3}\big(u(q)\fcan(q,p)\big) = \gen\Big(\gen^{2}\big(u(q)\fcan(q,p)\big)\Big),
\end{equation}
with
\begin{equation}\label{prooflabel1}
\gen^{2}\big(u(q)\fcan(q,p)\big)=\fcan(q,p)\big(\gamma p\cdot\nabla_qu(q)+ p\cdot H_qu(q)\cdot p - \nabla_qV(q)\cdot \nabla_qu(q)\big).
\end{equation}

%
%
%
%

After simplifying, with partial help of a computer algebra system, this becomes
\begin{align*}
\gen^{3}\big(u(q)\fcan(q,p)\big) &= \frac{1}{\beta}\fcan(q,p)\cdot\big[3p\cdot H_qu(q)\cdot \nabla_q V(q) + \beta p\cdot H_qV(q)\cdot \nabla_q u(q)\\
&\qquad +2\gamma \Delta_q u(q)+\beta \gamma \nabla_q u(q)\cdot \nabla_q V(q)\\[1ex]
&\qquad -\beta \gamma^2p\cdot\nabla_q u(q) - 3\beta\gamma p\cdot H_qu(q)\cdot p\\[1ex]
&\qquad -\beta p \cdot\nabla_q(p\cdot H_qu(q)\cdot p)\big].
\end{align*}

We now take the integral over $p$. First note, that
\begin{align*}
\int_\Pspace\fcan(q,p)p\cdot H_qu(q)\cdot \nabla_q V(q)~dp &= 0\\
\int_\Pspace\fcan(q,p)p\cdot H_qV(q)\cdot \nabla_q u(q)~dp &= 0\\
\int_\Pspace\fcan(q,p)p\cdot\nabla_q u(q)~dp &= 0\\
\int_\Pspace\fcan(q,p)p\cdot\nabla_q(p\cdot H_qu(q,p)\cdot p)~dp&= 0
\end{align*}
due to symmetry, therefore all that remains is
\begin{align*}
\frac{1}{\fQ(q)}&\int_\Pspace\gen^{3}\big(u(q)\fcan(q,p)\big)~dp =\\
&= \Big(\beta \int_\Pspace e^{-\beta\frac{p\cdot p}{2}}~dp\Big)^{-1}\int_\Pspace e^{-\beta\frac{p\cdot p}{2}}\big[2\gamma\Delta_qu(q)+\beta\gamma\nabla_qu(q)\cdot\nabla_qV(q) - 3\beta\gamma p\cdot H_qu(q)\cdot p\big]~dp\\
&=\frac{1}{\beta}\Big(\frac{2\pi}{\beta}\Big)^{-d/2}\Big[2\gamma\Delta_qu(q)\Big(\frac{2\pi}{\beta}\Big)^{d/2}+\beta\gamma\nabla_q u(q)\cdot\nabla_q V(q)\Big(\frac{2\pi}{\beta}\Big)^{d/2}-3\gamma\Delta_qu(q)\Big(\frac{2\pi}{\beta}\Big)^{d/2}\Big]\\
&= -\frac{\gamma}{\beta}\Delta_qu(q) + \gamma \nabla_q u\cdot\nabla_qV(q) \\
&= -\gamma\Big(\frac{1}{\beta}\Delta_qu(q)-\nabla_qu(q)\cdot\nabla_qV(q)\Big).
\end{align*}
\end{proof}

\begin{proof}[Proof of Lemma~\ref{experror}]
$E^tu$ is $N+1$ times differentiable in $t$. Thus we can apply the Taylor expansion for Banach space valued functions to $E^t$ (see \cite[Section 4.5]{Zei95}):
$$
E^tu=\sum_{n=0}^{2N}\frac{t^n}{n!}(\partial_s^nE^s\big|_{s=0})u + \Big(\int_0^1\frac{1}{(2N)!}(1-s)^{2N}\partial_s^{2N+1}E^{st}u~ds\Big)t^{2N+1}.
$$

The $n$-th derivative of $E^s$ is
$$
\partial_s^nE^s=E^s \sum_{k=0}^{\lfloor\frac{n}{2}\rfloor}\frac{n!~s^{n-2k}}{2^kk!(n-2k)!}\pgen{2}^{n-k}.
$$
For future reference, we define the operator $\displaystyle A_n:=\sum_{k=0}^{\lfloor\frac{n}{2}\rfloor}\frac{n!s^{n-2k}}{2^kk!(n-2k)!}\pgen{2}^{n-k}$.

Evaluation at $s=0$ yields 
$$
\partial_s^nE^s\big|_{s=0}=\begin{cases}0~,& n~ \text{odd} \\ \frac{n!}{2^\frac{n}{2}(\frac{n}{2})!}\pgen{2}^\frac{n}{2}~, & n~ \text{even} \end{cases},
$$
and so
$$
\sum_{n=0}^{2N}\frac{t^n}{n!}(\partial_s^nE^s\big|_{s=0})u = \sum_{n=0}^N \frac{t^{2n}}{(2n)!}\Big(\frac{(2n)!}{2^n n!} \pgen{2}^n\Big)u = \sum_{n=0}^N\frac{t^{2n}}{2^n n!}\pgen{2}^nu.
$$

Thus the remainder is only the integral:
\begin{align*}
\bigg\|E^tu - \sum_{n=0}^N \frac{\big(\frac{t^2}{2}\pgen{2}\big)^n}{n!}u\bigg\|_{2,\muQ} &= \Big\|t^{2N+1}\int_0^1\frac{1}{(2N)!}(1-s)^{2N}\partial^{2N+1}E^{st}u~ds\Big\|_{2,\muQ} \\
&\leq \frac{t^{2N+1}}{(2N)!}\sup_{s\in [0,1]}\big\|\partial^{2N+1}E^su\big\|_{2,\muQ}\\
&= \frac{t^{2N+1}}{(2N)!}\sup_{s\in [0,1]}\big\|(E^s A_{2N+1})u\big\|_{2,\muQ}\\
&\leq \frac{t^{2N+1}}{(2N)!}\sup_{s\in [0,1]}\underbrace{\big\|E^s\big\|_{2,\muQ}}_{\leq 1} \|A_{2N+1}u\|_{2,\muQ}.
\end{align*}
Due to the choice of $u$, $ \|A_{2N+1}u\|_{2,\muQ}< \infty$. This completes the proof.
\end{proof}

\section{Spectral properties of the spatial transfer operator}
\label{sec:spatial_spectrum}

To extract quantitative metastability properties of the spatial dynamics from its transfer operator, we will use the results of Huisinga~\cite{Hui01}. More precisely, we will prove that the spatial transfer operator~$S^t$ we consider satisfies the conditions under which Theorem~\ref{metastability} above is valid. Recall that we model molecular dynamics by the Langevin equation on the canonical state space with position coordinates $q$ and momenta $p$, such that the unique invariant density of the system is the canonical density $\fcan(q,p) = f_{\Qspace}(q) f_{\Pspace}(p)$ with $f_{\Qspace}(q)\propto \exp(-\beta V(q))$ and $f_{\Pspace}(p) \propto \exp(-\tfrac12 p\cdot p)$ being called the \emph{spatial} and \emph{momentum distributions}, respectively.

Opposed to the density-based statistical description of the dynamics used in the main text, we will work with a slightly greater generality here. To this end let $p_{Lan}^t$ denote the \emph{stochastic transition function} of the Langevin process, i.e.\ if $(\bm q_t,\bm p_t)$ is a Langevin process with deterministic initial condition $\bm q_0=q$ and $\bm p_0=p$ (almost surely), then
\[
p_{Lan}^t((q,p),A) = {\text{Prob}}((\bm q_t,\bm p_t)\in A),\qquad\forall\text{ measurable } A\subset \Qspace\times\Pspace .
\]
Now we can express spatial fluctuations in the canonical density even for initial distributions not absolutely continuous to~$f_{\Qspace}$, such as the Dirac measure $\delta_{q^*}$ centered in some $q^*\in \Qspace$. In accordance with~\eqref{spatialto} we have for a measurable $A\subset \Qspace$ that
\begin{equation}
p_S^t(q^*,A) := S^t \delta_{q^*}(A) = \frac{1}{f_{\Qspace}(q^*)}\int_{\Pspace} \fcan (q^*,p) p_{Lan}^t\big((q^*,p),A\times\Pspace\big)\,dp\,.
\label{eq:spatial_general}
\end{equation}
Below we will consider transition probabilities of the spatial dynamics (with lag time $t$) between two measurable subsets $A,B\subset\Qspace$ of the configuration space, supposed that the initial condition is distributed with respect to the invariant density~$f_{\Qspace}$. Let us denote these probabilities by $p_S^t(A, B)$. We have that
\[
p_S^t(A, B) = \frac{1}{\int_A\! f_{\Qspace}(q)dq}\int_A\!\! f_{\Qspace}(q) p_S^t(q,B)dq = \frac{1}{\int_A\! f_{\Qspace}(q)dq}\int_{A\times\Pspace}\!\! f_{\Qspace}(q)f_{\Pspace}(p)p_{Lan}^t\big((q,p),B\times\Pspace\big)d(q,p).
\]

\paragraph{Reversibility of the spatial dynamics}
First, we will show that $S^t$ is self-adjoint in the weighted space $\LfQ{2}(\Qspace)$, hence its spectrum is purely real. Self-adjointness of the transfer operator is equivalent with reversibility of the corresponding process: the following result is from \cite[Proposition 1.1]{Hui01}, re-stated for our purposes.
\begin{proposition}
Fix $t>0$. Let $S^t:\LfQ{2}(\Qspace)\subset \Lw{1}{\mu_{\Qspace}}{\Qspace} \to \LfQ{2}(\Qspace)$ denote the transfer operator of the spatial dynamics for lag time $t$. Let the associated (discrete time) Markov process be denoted by $\bm q_n$, $n\in\mathbb{N}$. Then $S^t$ is self-adjoint with respect to the scalar product~$\langle\cdot,\cdot\rangle_{2,\muQ}$, i.e.\ $\langle S^t u,v\rangle_{2,\muQ} = \langle u, S^t v\rangle_{2,\muQ}$ for all $u,v\in \LfQ{2}(\Qspace)$, if and only if~$\bm q_n$ is reversible.
\end{proposition}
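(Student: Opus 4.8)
The plan is to recognise this as the standard correspondence ``Perron--Frobenius operator self-adjoint in the $L^2$ of its invariant measure $\iff$ process reversible'' (the general version being \cite[Proposition~1.1]{Hui01}), and to make the translation explicit for $S^t$ by reducing both sides to a single symmetry statement about the bilinear form $(u,v)\mapsto\langle S^t u,v\rangle_{2,\muQ}$.

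First I would pin down the probabilistic objects. The chain $\bm q_n$ is the Markov chain whose one-step kernel is the spatial transition function $p_S^t(q,\cdot)$ from~\eqref{eq:spatial_general}; the re-randomisation of the momenta according to $f_\Pspace$ at each step is exactly what makes it Markovian, and it has $\muQ$ as invariant measure, so that $S^t$ is its transfer operator and, by Corollary~\ref{cor:opcontraction}, a contraction on $\LfQ{2}(\Qspace)$. Reversibility of the stationary chain (initialised at $\muQ$) is, by a monotone-class argument together with the Markov property, equivalent to the two-time detailed-balance identity
\[
\mathrm{Prob}_{\muQ}\big(\bm q_0\in A,\ \bm q_1\in B\big)=\mathrm{Prob}_{\muQ}\big(\bm q_0\in B,\ \bm q_1\in A\big)\qquad\text{for all measurable } A,B\subset\Qspace.
\]
Next I would rewrite the left-hand side in terms of $S^t$: using $d\muQ=\fQ\,dm$, the defining formula~\eqref{spatialto}, and Fubini (this is precisely the computation already behind~\eqref{slicetransitionprob}), one obtains
\[
\mathrm{Prob}_{\muQ}\big(\bm q_0\in A,\ \bm q_1\in B\big)=\int_A p_S^t(q,B)\,d\muQ(q)=\big\langle S^t\chi_A,\chi_B\big\rangle_{2,\muQ},
\]
so detailed balance is equivalent to $\langle S^t\chi_A,\chi_B\rangle_{2,\muQ}=\langle S^t\chi_B,\chi_A\rangle_{2,\muQ}$ for all measurable $A,B$.

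Finally I would promote this to all of $\LfQ{2}(\Qspace)$. Since $S^t$ is bounded on the real Hilbert space $\LfQ{2}(\Qspace)$, both $(u,v)\mapsto\langle S^t u,v\rangle_{2,\muQ}$ and $(u,v)\mapsto\langle S^t v,u\rangle_{2,\muQ}$ are continuous bilinear forms; as they agree on all indicator pairs $(\chi_A,\chi_B)$, they agree on the dense subspace of simple functions and hence everywhere. By symmetry of the inner product $\langle S^t v,u\rangle_{2,\muQ}=\langle u,S^t v\rangle_{2,\muQ}$, so the identity $\langle S^t u,v\rangle_{2,\muQ}=\langle S^t v,u\rangle_{2,\muQ}$ is exactly self-adjointness of $S^t$; reading the equivalences in both directions gives the claim. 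The steps are routine, and the only places needing a little care are the monotone-class reduction of reversibility to the two-time condition and the density argument (which relies on the contraction property from Corollary~\ref{cor:opcontraction}). The closest thing to an obstacle is simply ensuring the setup is correct --- in particular that $\bm q_n$ genuinely is a Markov chain with invariant measure $\muQ$, which is where the momentum-averaging structure of the spatial dynamics enters and which should be cited or recorded as a short lemma before the proposition.
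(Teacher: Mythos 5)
Your proof is correct, but it is worth noting that the paper does not actually prove this proposition at all: it is stated as a re-statement of \cite[Proposition~1.1]{Hui01} and taken as given, with the paper's effort going into verifying the \emph{hypothesis} (reversibility of $\bm q_n$, via the extended detailed balance property of the Langevin process in Lemma~\ref{lem:Langevin_rev}). What you have supplied is the standard self-contained argument behind Huisinga's result: reduce reversibility of the stationary chain to the two-time detailed-balance identity
\[
\mathrm{Prob}_{\muQ}\big(\bm q_0\in A,\ \bm q_1\in B\big)=\mathrm{Prob}_{\muQ}\big(\bm q_0\in B,\ \bm q_1\in A\big),
\]
identify the left-hand side with $\langle S^t\chi_A,\chi_B\rangle_{2,\muQ}$ exactly as in~\eqref{slicetransitionprob}, and extend from indicators to all of $\LfQ{2}(\Qspace)$ by bilinearity, density of simple functions, and boundedness of $S^t$ (Corollary~\ref{cor:opcontraction}). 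All of these steps are sound, and your two flagged points of care (the monotone-class reduction via the Markov property, and the observation that the momentum re-randomisation at each step is what makes $\bm q_n$ Markovian with invariant measure $\muQ$) are precisely the right ones. One small remark: be careful to phrase reversibility in terms of the symmetric \emph{joint} law, as you did, rather than in terms of the conditional probabilities $p_S^t(A,B)$; the latter satisfy the weighted identity $\muQ(A)\,p_S^t(A,B)=\muQ(B)\,p_S^t(B,A)$, which coincides with symmetry of $\langle S^t\chi_A,\chi_B\rangle_{2,\muQ}$ only after reinstating the factors $\muQ(A)$, $\muQ(B)$ --- the paper's own phrasing immediately after the proposition elides this normalisation.
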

Reversibility in this case is equivalent with $p_S^t(A,B) = p_S^t(B,A)$ for any measurable $A,B\subset\Qspace$. Indeed, one way to define the reversed process is by setting $p_{S,rev}^t(A,B):= p_S^t(B,A)$ for any measurable $A,B\subset\Qspace$. In order to show reversibility let us start with a property of the Langevin process.\footnote{The property described in Lemma~\ref{lem:Langevin_rev} is also known as \emph{extended detailed balance condition}, see \cite[Lemma~4.10]{SchSa13}.}
\begin{lemma}	\label{lem:Langevin_rev}
Let $p_{Lan,rev}^t$ denote the transition function of the reversed Langevin process, and let $A\subset \Qspace\times\Pspace$ be a measurable set which is symmetric in the momentum coordinate, i.e.
\[
A = \left\{(q,-p)\,\big\vert\, (q,p)\in A\right\}.
\]
Then $p_{Lan}^t\big((q,p),A\big) = p_{Lan,rev}^t\big((q,-p),A\big)$ for any $q\in\Qspace$, $p\in\Pspace$.
\end{lemma}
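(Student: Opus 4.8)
The plan is to derive the identity from a single structural feature of the Langevin equation: flipping the sign of the momentum interchanges the Langevin process with its time reversal (with respect to the canonical measure). Let $\mathcal{R}\colon\Omega\to\Omega$, $\mathcal{R}(q,p)=(q,-p)$, denote the momentum flip, and let $\mathcal{R}$ also denote the induced operator on functions, $(\mathcal{R}g)(q,p):=g(q,-p)$. Since $f_\Pspace$, and hence $\fcan$, is even in $p$, the map $\mathcal{R}$ is a $\mucan$-measure-preserving involution of $\Omega$, so $\mathcal{R}$ is a unitary involution of $\Lw{2}{\mucan}{\Omega}$; moreover a measurable $A\subset\Qspace\times\Pspace$ is momentum-symmetric exactly when $\mathcal{R}\chi_A=\chi_A$. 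Writing $U^t g(x)=\int g\,dp_{Lan}^t(x,\cdot)$ for the Koopman (backward) operator of the Langevin process and $U^t_{rev}$ for that of the reversed process, and recalling that for a stationary Markov process one has $U^t_{rev}=(U^t)^\ast$ in $\Lw{2}{\mucan}{\Omega}$, the statement $p_{Lan}^t((q,p),A)=p_{Lan,rev}^t((q,-p),A)$ for every $(q,p)$ becomes, for momentum-symmetric $A$, the operator identity
$$U^t=\mathcal{R}\,U^t_{rev}\,\mathcal{R},\qquad\text{equivalently}\qquad\mathcal{R}\,U^t\,\mathcal{R}=(U^t)^\ast .$$

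I would establish the latter at the level of generators. The backward Kolmogorov generator of the Langevin process is $\mathcal{A}=\mathcal{A}_{\mathrm{ham}}+\mathcal{A}_{\mathrm{ou}}$ with the Hamiltonian transport part $\mathcal{A}_{\mathrm{ham}}=p\cdot\nabla_q-\nabla V(q)\cdot\nabla_p$ and the Ornstein--Uhlenbeck part $\mathcal{A}_{\mathrm{ou}}=\tfrac{\gamma}{\beta}\Delta_p-\gamma\,p\cdot\nabla_p$. A short computation gives $\mathcal{R}\,\mathcal{A}_{\mathrm{ham}}\,\mathcal{R}=-\mathcal{A}_{\mathrm{ham}}$ (each term acquires a factor $-1$, from the $p$ in $p\cdot\nabla_q$ and from the $\nabla_p$ in $\nabla V\cdot\nabla_p$) and $\mathcal{R}\,\mathcal{A}_{\mathrm{ou}}\,\mathcal{R}=\mathcal{A}_{\mathrm{ou}}$ (both $\Delta_p$ and $p\cdot\nabla_p$ are invariant under $p\mapsto-p$). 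On the other hand, in $\Lw{2}{\mucan}{\Omega}$ the operator $\mathcal{A}_{\mathrm{ham}}$ is skew-symmetric --- it is the Liouville operator of the Hamiltonian $\tfrac12 p\cdot p+V(q)$, a Lebesgue-divergence-free vector field annihilating $\fcan$ --- while $\mathcal{A}_{\mathrm{ou}}$ is self-adjoint, being the generator of an Ornstein--Uhlenbeck semigroup reversible with respect to the Gaussian momentum law. Hence $\mathcal{R}\,\mathcal{A}\,\mathcal{R}=\mathcal{A}_{\mathrm{ou}}-\mathcal{A}_{\mathrm{ham}}=\mathcal{A}^\ast$, and exponentiating yields $\mathcal{R}\,U^t\,\mathcal{R}=e^{t\mathcal{A}^\ast}=(U^t)^\ast=U^t_{rev}$. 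Applying this to $\chi_A$ with $A$ momentum-symmetric and evaluating at $(q,-p)$ returns $p_{Lan}^t((q,p),A)=(U^t\chi_A)(q,p)=(\mathcal{R}U^t_{rev}\mathcal{R}\chi_A)(q,p)=(U^t_{rev}\chi_A)(q,-p)=p_{Lan,rev}^t((q,-p),A)$, as claimed.

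The main obstacle is the exponentiation step: one must know that $\mathcal{A}$ and $\mathcal{A}^\ast$ generate strongly continuous semigroups on $\Lw{2}{\mucan}{\Omega}$ and that $\mathcal{R}$ carries a core of one onto a core of the other, which relies on the hypoelliptic regularity theory of the kinetic Fokker--Planck operator; in addition, an $L^2$ identity only yields the pointwise statement $\mucan$-almost everywhere, so one still needs a (strong-)Feller property of the Langevin process to reach \emph{every} $(q,p)$. A route that avoids these functional-analytic subtleties is a pathwise one: verify directly that if $s\mapsto(\bm q_s,\bm p_s)$ solves the Langevin SDE on $[0,t]$ from $(q,p)$ with driving Brownian motion $\bm w$, then $s\mapsto(\bm q_{t-s},-\bm p_{t-s})$ solves the Langevin SDE with the same coefficients, from $(q,-p)$, driven by the time-reversed increments of $\bm w$ --- which is precisely the diffusion time-reversal formula (Haussmann--Pardoux) specialized to the momentum-flip symmetry established above. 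Since the driving noise has the same law and pathwise uniqueness holds, the two processes induce the same terminal distribution on momentum-symmetric sets, giving the identity directly and for every starting point. I would present the pathwise argument as the rigorous proof, keeping the generator computation as the conceptual reason why it works, and emphasize that the momentum-evenness of $\fcan$ together with the momentum-symmetry of $A$ are exactly what make the two sides coincide.
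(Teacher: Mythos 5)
Your pathwise argument --- quote the Haussmann--Pardoux form of the reversed Langevin SDE, substitute $\tilde p=-p$ in the forward equation, and use that $\bm w_t$ and $-\bm w_t$ are equal in law to identify the two SDEs, so that the reversed process started at $(q,-p)$ has the law of $(\bm q_t,-\bm p_t)$ for the forward process started at $(q,p)$ --- is exactly the proof given in the paper, at the same level of rigor (the paper likewise only cites the time-reversal result for the form of the reversed SDE). The preceding generator computation, showing that conjugation by the momentum flip carries the Langevin generator to its $\Lw{2}{\mucan}{\Omega}$-adjoint, is a correct and illuminating supplement, but, as you note yourself, it is not needed once the pathwise argument is in place.
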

\begin{proof}
Recall the Langevin SDE \eqref{langevindynamics}:
\begin{eqnarray*}
\partial \bm q_t & = & \bm p_t\\
\partial \bm p_t & = & -\nabla V(\bm q_t) - \gamma \bm p_t + \sigma \bm w_t\,.
\end{eqnarray*}
The reversed Langevin process is also an It\^{o} diffusion~\cite{HuPa86} governed by the SDE
\begin{eqnarray*}
\partial \bm q_t & = & -\bm p_t\\
\partial \bm p_t & = & \nabla V(\bm q_t) - \gamma \bm p_t + \sigma \bm w_t\,.
\end{eqnarray*}
Applying the substitution $\tilde{p} = -p$ for the Langevin equations in forward time, and using the fact that $\bm w_t$ and $-\bm w_t$ are stochastically equivalent in the sense that their distributions coincide, we obtain
\begin{eqnarray*}
\partial \bm q_t & = & -\tilde{\bm p}_t\\
\partial \tilde{\bm p}_t & = & \nabla V(\bm q_t) - \gamma \tilde{\bm p}_t + \sigma \bm w_t\,.
\end{eqnarray*}
Note that this is the same SDE as for the reversed process. Thus, the reversed process starting at $(q,-p)$ has the same distribution as $(\bm q_t, -\bm p_t)$, where $(\bm q_t,\bm p_t)$ is the forward time process starting at $(q,p)$.
\end{proof}
To show reversibility of the spatial dynamics, we rewrite $p_S^t(A,B)$ in equivalent terms. Let us also introduce the shorthand notation $F_A = \muQ(A) = \int_A f_{\Qspace}(q)\,dq$.
\begin{eqnarray*}
p_S^t(A,B) & = & F_A^{-1}\int_{A\times\Pspace} f_{\Qspace}(q)f_{\Pspace}(p) p_{Lan}^t\big((q,p),B\times\Pspace\big)\,d(q,p) \\
	& = & (-1)^d F_A^{-1} \int_{A\times-\Pspace} f_{\Qspace}(q)f_{\Pspace}(-\tilde p) p_{Lan}^t\big((q,-\tilde p\big),B\times\Pspace)\,d(q,\tilde p)\\
	& = & F_A^{-1}\int_{A\times \Pspace} f_{\Qspace}(q)f_{\Pspace}(-\tilde p) p_{Lan}^t\big((q,-\tilde p),B\times\Pspace\big)\,d(q,\tilde p)\,,
\end{eqnarray*}
where we used the integral substitution $\tilde p = -p$, then the symmetry of~$\Pspace$, such that flipping the integration bounds only introduces change of sign. From this and Lemma~\ref{lem:Langevin_rev} we obtain
\begin{equation}
p_S^t(A,B) = F_A^{-1}\int_{A\times\Pspace} f_{\Qspace}(q)f_{\Pspace}(\tilde p) p_{Lan,rev}^t\big((q,\tilde p),B\times\Pspace\big)\,d(q,\tilde p)\,,
\label{eq:p_sAB}
\end{equation}
by exploiting that $f_{\Pspace}(-\tilde p) = f_{\Pspace}(\tilde p)$. In the next lemma we establish that the right hand side of \eqref{eq:p_sAB} in fact expresses the transition probability from $A$ to $B$ for the \emph{reversed spatial process} $p_{S,rev}^t$, and hence $p_S^t(A,B) = p_{S,rev}^t(A,B) = p_S^t(B,A)$. This concludes the proof of reversibility for the spatial process.

\begin{lemma}
It holds $p_{S,rev}^t(A,B) = F_A^{-1}\int_{A\times\Pspace} f_{\Qspace}(q)f_{\Pspace}(p) p_{Lan,rev}^t\big((q, p),B\times\Pspace\big)\,d(q,p)$ for any measurable $A,B\subset\Qspace$.
\end{lemma}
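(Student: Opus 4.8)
The plan is to unfold the definition of the reversed spatial process and reduce the claim to two facts that are essentially already at hand: the momentum-averaging formula~\eqref{eq:spatial_general} for the forward spatial transfer operator, and the detailed-balance identity relating the Langevin process to its time reversal, evaluated on cylinder sets of the form $A\times\Pspace$. First I would record that the reversed spatial chain shares the stationary measure $\muQ$ of the forward one: integrating~\eqref{eq:spatial_general} against $f_{\Qspace}$ and using that $\fcan$ is Langevin-invariant (with $\Pspace$-marginal $f_{\Pspace}$) gives $\int_{\Qspace} f_{\Qspace}(q)\,p_S^t(q,B)\,dq=\mucan(B\times\Pspace)=\muQ(B)$, so $\muQ$ is $p_S^t$-invariant, and time reversal leaves it untouched. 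Hence the transition probability between sets for the reversed chain started from $\muQ$ restricted to $A$ has the familiar normalised form $p_{S,rev}^t(A,B)=F_A^{-1}\int_A f_{\Qspace}(q)\,p_{S,rev}^t(q,B)\,dq$, exactly as in the displayed formula for $p_S^t(A,B)$, and --- by the very definition of the time-reversed Markov chain relative to its stationary measure --- the one-step kernels satisfy $\int_A f_{\Qspace}(q)\,p_{S,rev}^t(q,B)\,dq=\int_B f_{\Qspace}(q)\,p_S^t(q,A)\,dq$ for all measurable $A,B\subseteq\Qspace$.

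It then remains to rewrite this right-hand side. Inserting the representation~\eqref{eq:spatial_general} of $p_S^t(q,A)$ and using Fubini gives $\int_B f_{\Qspace}(q)\,p_S^t(q,A)\,dq=\int_{B\times\Pspace}\fcan(q,p)\,p_{Lan}^t\big((q,p),A\times\Pspace\big)\,d(q,p)$, and then I invoke the detailed-balance (adjoint) identity for the phase-space Langevin dynamics and its $\mucan$-reversal, $\int_{\tilde A}\fcan(z)\,p_{Lan}^t(z,\tilde B)\,dz=\int_{\tilde B}\fcan(z)\,p_{Lan,rev}^t(z,\tilde A)\,dz$, valid for all measurable $\tilde A,\tilde B\subseteq\Qspace\times\Pspace$ (the standard characterisation of the time-reversed It\^o diffusion, in the same circle of ideas as Lemma~\ref{lem:Langevin_rev} and resting on the time-reversal theory cited there), specialised to $\tilde A=B\times\Pspace$ and $\tilde B=A\times\Pspace$. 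The resulting chain of equalities,
\begin{align*}
F_A\,p_{S,rev}^t(A,B) &= \int_A f_{\Qspace}(q)\,p_{S,rev}^t(q,B)\,dq = \int_B f_{\Qspace}(q)\,p_S^t(q,A)\,dq\\
&= \int_{B\times\Pspace}\fcan\,p_{Lan}^t(\cdot,A\times\Pspace)\,d(q,p) = \int_{A\times\Pspace}\fcan\,p_{Lan,rev}^t(\cdot,B\times\Pspace)\,d(q,p),
\end{align*}
then yields the asserted formula after dividing by $F_A$. Combined with~\eqref{eq:p_sAB}, this also closes the argument that $S^t$ is self-adjoint on $\LfQ{2}(\Qspace)$, i.e.\ that the spatial dynamics is reversible.

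The only genuinely non-mechanical step is the justification of the two adjoint identities: that $p_{Lan,rev}^t$ and $p_{S,rev}^t$ really are the $\mucan$- and $\muQ$-reversals, and that these are the pertinent invariant measures. This rests on the cited facts about time reversal of It\^o diffusions, in particular on the flipped Langevin SDE of Lemma~\ref{lem:Langevin_rev} again carrying $\fcan$ as its invariant density. Once that is granted, the remainder is a change of variables together with Fubini; the one routine technicality is the measurability of $q\mapsto p_S^t(q,A)$ and the legitimacy of interchanging the $q$- and $p$-integrations, both of which are standard for the transition kernels considered here. I do not anticipate any obstacle beyond this bookkeeping.
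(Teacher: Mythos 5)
Your proposal is correct and follows essentially the same route as the paper: both start from the Bayes-formula relation $F_A\,p_{S,rev}^t(A,B)=F_B\,p_S^t(B,A)=\int_B f_{\Qspace}(q)\,p_S^t(q,A)\,dq$, expand $p_S^t(q,A)$ via the momentum-averaged representation~\eqref{eq:spatial_general}, and then convert $p_{Lan}^t$ into $p_{Lan,rev}^t$ using the $\fcan$-detailed-balance (infinitesimal Bayes) identity before reordering the integrals. The only cosmetic difference is that you use the set-level adjoint identity where the paper writes its infinitesimal (kernel-level) version, and you additionally spell out the $\muQ$-invariance of $p_S^t$, which the paper takes for granted.
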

\begin{proof}
The transition probability for the reversed system can be obtained from Bayes formula:
\begin{eqnarray*}
p_{S,rev}^t(A,B) & = & \frac{p_S^t(B,A)\int_B f_{\Qspace}(q)dq}{\int_A f_{\Qspace}(q)dq}\\
	& = & F_A^{-1} \int_B f_{\Qspace}(q)p_S^t(q,A)\,dq\\
	& = & F_A^{-1} \int_{q\in B}\int_{\tilde{q}\in A} f_{\Qspace}(q)p_S^t(q,d\tilde{q})\,dq\\
	& = & \ldots
\end{eqnarray*}
where, by writing $\int_{q\in B}$ we would like to indicate which variable is integrated over which set, to maintain a good readability. With~\eqref{eq:spatial_general} we can expand the term on the right hand side further:
\begin{eqnarray*}
\ldots & = &  F_A^{-1}\int_{q\in B} \int_{p\in\Pspace}  \int_{\tilde{q}\in A}\int_{\tilde{p}\in\Pspace} \underbrace{f_{\Qspace}(q) f_{\Pspace}(p) p_{Lan}^t\big((q,p),d(\tilde{q},\tilde{p})\big)\,dp dq}_{\stackrel{\rm def}{=} f_{\Qspace}(\tilde{q})f_{\Pspace}(\tilde{p})p_{Lan,rev}^t\big((\tilde{q},\tilde{p}),d(q,p)\big)d\tilde{q}d\tilde{p}}\\
	& = & \ldots
\end{eqnarray*}
where, for the underbraced term, we use the infinitesimal version of Bayes formula to relate the transition functions of the forward and backward time Langevin processes. Rearranging the integration order yields the claim
\[
\ldots =  F_A^{-1} \int_{\tilde{q}\in A} \int_{\tilde{p}\in\Pspace} f_{\Qspace}(\tilde{q})f_{\Pspace}(\tilde{p}) p_{Lan,rev}^t\big((\tilde{q},\tilde{p}),B\times\Pspace\big) d\tilde{p}d \tilde{q}\,.
\]
\end{proof}

\paragraph{Geometric ergodicity of the spatial dynamics}
\begin{definition}
Let $\bm x_t$, where $t$ denotes either discrete or continuous time, be a Markov process with transition function $p^t$ and unique invariant measure~$\mu$. Then $\bm x_t$ is called \emph{geometrically ergodic} if for every state $x\in \mathcal{X}$ and time~$t$
\[
\|p^t(x,\cdot) - \mu\|_{\rm TV} \le M(x)\rho^t
\]
holds for some $M\in L^1_{\mu}$ and $\rho<1$. Here, $\|\cdot\|_{\rm TV}$ denotes the total variation norm for signed measures.
\end{definition}
Following \cite[Proposition~6.3]{Hui01} (see also \cite{MaSt02} and \cite{MaStHi02}) we can establish the geometric ergodicity of the Langevin process. Here and in the following~$\mucan$ denotes the canonical measure of the Langevin process, i.e.\ $d\mucan = \fcan dm$.
\begin{proposition}	\label{prop:Langevin_geomerg}
Let $\bm x_t$ denote the Langevin process. Fix some lag time $t>0$, and let $\bm z_n:= \bm x_{nt}$ be the sampled time process. In either of the following cases,~$\bm z_n$ has the unique invariant measure $\mucan$ and is geometrically ergodic.
\begin{enumerate}[(i)]
\item The state space $\mathcal{X}\subset\mathbb{R}^d$ is periodic and the potential $V:\mathcal{X}\to\mathbb{R}$ is smooth.
\item The state space $\mathcal{X} = \mathbb{R}^d$, the potential $V:\mathcal{X}\to\mathbb{R}_{\ge 0}$ is smooth and $V(x)$ is growing at infinity as $\|x\|^{2l}$ for some positive integer~$l$.
\end{enumerate}
\end{proposition}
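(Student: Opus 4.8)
The plan is to deduce both cases from the Meyn--Tweedie theory of geometrically ergodic Markov chains, following \cite[Proposition~6.3]{Hui01} together with \cite{MaSt02} and \cite{MaStHi02}; I indicate which two ingredients must be verified for the sampled chain $\bm z_n=\bm x_{nt}$ and where the actual work lies.

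\emph{Ingredient 1 --- a geometric drift (Lyapunov) condition.} I would exhibit a measurable $W:\Qspace\times\Pspace\to[1,\infty)$ and constants $\kappa<1$, $b<\infty$, together with a compact set $C$, such that $\mathbb{E}\big[W(\bm z_1)\mid\bm z_0=(q,p)\big]\le\kappa\,W(q,p)+b\,\chi_C(q,p)$. In case~(i) the state space is compact, so $W\equiv 1$ works and this ingredient is vacuous; all the content then sits in Ingredient~2. In case~(ii) I would take, after \cite{MaStHi02}, $W(q,p)=\exp\!\big(\eta\,H(q,p)+c\,p\cdot\nabla V(q)\big)$ built from the Hamiltonian $H(q,p)=\tfrac12\,p\cdot p+V(q)$, with small parameters $\eta,c>0$, the cross term $c\,p\cdot\nabla V(q)$ being needed precisely because the noise and the friction $-\gamma p$ act only in the $p$-direction. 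Applying the generator of the Langevin dynamics in its action on observables (the formal adjoint of $\genl$) to $W$, and using $\sigma^2=2\gamma/\beta$ together with the growth $V(x)\sim\|x\|^{2l}$, one checks that this quantity is $\le-\alpha W$ outside a large ball; a Dynkin-formula and Gronwall argument then turns this continuous-time drift into the discrete-time inequality at lag time~$t$ with $\kappa=e^{-\alpha t}<1$.

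\emph{Ingredient 2 --- a minorization on compact sets.} The key structural fact is hypoellipticity: the drift field $b(q,p)=\big(p,\,-\nabla V(q)-\gamma p\big)$ and the constant noise fields $\partial_{p_i}$ satisfy H\"ormander's bracket condition, since the commutators $[\partial_{p_i},\,b\cdot\nabla]$ recover $\partial_{q_i}$ up to lower-order terms, so the iterated brackets span $\mathbb{R}^{2d}$ at every point. Consequently $p_{Lan}^t\big((q,p),\cdot\big)$ has a jointly smooth density, which is moreover strictly positive because the associated deterministic control system is exactly controllable on $\Qspace\times\Pspace$ (Stroock--Varadhan support theorem). Smoothness yields the strong Feller property and positivity yields topological irreducibility; together these force every compact set to be petite, so that a uniform minorization $p_{Lan}^{n_0t}\big((q,p),\cdot\big)\ge\delta\,\nu(\cdot)$ holds on $C$ for a suitable $n_0\in\mathbb{N}$, probability measure $\nu$ and $\delta>0$.

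\emph{Conclusion and main obstacle.} Combining Ingredients~1 and~2, Harris' theorem (in the quantitative form used in the cited references) gives that $\mucan$ is the unique invariant measure of $\bm z_n$ and that $\|p_{Lan}^{nt}(x,\cdot)-\mucan\|_{\mathrm{TV}}\le M(x)\,\rho^{nt}$ for some $M\in L^1_{\mucan}$ and $\rho<1$, i.e.\ geometric ergodicity. The main obstacle is Ingredient~1 in case~(ii): the degeneracy of the noise makes the naive choice $W=e^{\eta H}$ fail, because the indefinite terms produced by $p\cdot\nabla_q$ are not controlled, and one must tune the cross term and the constants $\eta,c$ so that those terms are absorbed; this is exactly the delicate estimate carried out in \cite{MaStHi02}, on which we rely. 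Verifying H\"ormander's condition and the support theorem in Ingredient~2, and the Dynkin/Gronwall passage to discrete time, are routine for this SDE.
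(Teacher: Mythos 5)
The paper offers no proof of this proposition at all: it is imported by direct citation of \cite[Proposition~6.3]{Hui01} together with \cite{MaSt02} and \cite{MaStHi02}, so there is no in-text argument against which to compare yours. Your proposal is a correct reconstruction of the argument underlying those citations: the Meyn--Tweedie/Harris route via (a) a geometric drift condition and (b) a minorization on compact sets obtained from hypoellipticity (H\"ormander's bracket condition does hold here, since the brackets of $\partial_{p_i}$ with the drift recover the $q$-directions) combined with positivity of the transition density via controllability and the support theorem. You also correctly locate the work: in case (i) compactness of the torus makes the Lyapunov function trivial and everything rests on the minorization, while case (ii) hinges on a Lyapunov function with a position--momentum cross term to compensate for the degeneracy of the noise. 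One caveat on the specific form you propose: for a potential growing like $\|x\|^{2l}$ the cross term used in \cite{MaStHi02} is essentially quadratic, of the form $c\,q\cdot p$ (together with a multiple of $\|q\|_2^2$), rather than $c\,p\cdot\nabla V(q)$; for $l>1$ the latter grows like $\|p\|_2\,\|q\|_2^{2l-1}$ and is not dominated by $\eta H$ for any fixed $\eta,c>0$, so the exponent as written would not define a valid Lyapunov function without a normalization of the cross term. Since you explicitly defer that estimate to \cite{MaStHi02}, this is a cosmetic rather than a structural defect. The paper's approach buys brevity; yours buys a self-contained explanation of exactly where the hypotheses on $V$ (smoothness, periodicity or polynomial growth) enter.
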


We assume from now on that either condition (i) or (ii) of Proposition~\ref{prop:Langevin_geomerg} is satisfied. This means, in particular, that for a fixed $t>0$ there is a function $M\in \Lw{1}{\mucan}{\Qspace\times\Pspace}$ and a constant $\rho<1$ such that for every $q\in\Qspace$ and $p\in\Pspace$ holds
\begin{equation}
\|p_{Lan}^{nt}((q,p),\cdot) - \mucan \|_{\rm TV} \le M(q,p)\rho^n\,.
\label{eq:Langevin_geomerg}
\end{equation}
To show geometric ergodicty of the spatial dynamics, recall its transition function from~\eqref{eq:spatial_general}, and that~$\muQ$ given by $d\muQ(q):= \fQ(q)dq$ is its unique invariant measure. Note that by construction $\muQ = \mucan (\cdot\times\Pspace)$. We have
\begin{eqnarray*}
\big\|p_S^{nt}((q,p),\cdot) - \muQ\big\|_{\rm TV} & = & \Big\|\int_{\Pspace}\tfrac{\fcan(q,p)}{\fQ(q)}p_{Lan}^{nt}\big((q,p),\cdot\times\Pspace\big)\,dp - \mucan(\cdot\times\Pspace)\Big\|_{\rm TV}\\
	& = & \Big\|\int_{\Pspace}\tfrac{\fcan(q,p)}{\fQ(q)}\left[p_{Lan}^{nt}\big((q,p),\cdot\times\Pspace\big)- \mucan(\cdot\times\Pspace)\right]\,dp \Big\|_{\rm TV}\\
	& \le & \int_{\Pspace}\tfrac{\fcan(q,p)}{\fQ(q)}\big\|p_{Lan}^{nt}\big((q,p),\cdot\times\Pspace\big)- \mucan(\cdot\times\Pspace)\big\|_{\rm TV}\,dp \\
	& \le & \rho^n\underbrace{\int_{\Pspace}\tfrac{\fcan(q,p)}{\fQ(q)}M(q,p)\,dp}_{=: \tilde{M}(q)}\,,
\end{eqnarray*}
where the second line follows from $\int_{\Pspace}\tfrac{\fcan(q,p)}{\fQ(q)}dp=1$, the third comes from pulling the norm inside the integral, while the last inequality is a consequence of~\eqref{eq:Langevin_geomerg}, and of the total variation norm being defined by a supremum over all possible partitions of state space (more precisely: restricting the partitions to sets of the form ${\cdot\times\Pspace}$ results in a total variation not greater than in the non-restricted case). $M\in \Lw{1}{\mucan}{\Qspace\times\Pspace}$ implies $\tilde{M}\in \Lw{1}{\muQ}{\Qspace}$. Thus, the spatial process is geometrically ergodic.

\paragraph{The spectrum of the spatial transfer operator}
\emph{Dynamical properties} of the transition function $p_S^t$---namely reversibility and geometric ergodicity---imply some desirable \emph{spectral properties} of the associated transfer operator\footnote{As shown by Baxter and Rosenthal \cite[Corollary to Lemma~1]{BaRo95}, the transfer operator associated with a transition function having the invariant measure~$\mu$ is a well-defined contraction on every $\Lw{r}{\mu}{\Qspace}$, $1\le r\le \infty$. See also Corollary~\ref{cor:opcontraction} in this manuscript.\label{ftn:BaRo95}} $S^t:\LfQ{2}(\Qspace) \to \LfQ{2}(\Qspace)$.

Consider the two properties of some transfer operator $P$:
\begin{enumerate}
\item[(P1)] The essential spectral radius of $P$ is less than one, i.e.\ $r_{\rm ess}(P)<1$.
\item[(P2)] The eigenvalue $\lambda=1$ of $P$ is simple and dominant.
\end{enumerate}

\cite[Theorem~4.31]{Hui01} states:
\begin{theorem}
Let $P:\mathcal{L}^2_{\mu}\to \mathcal{L}^2_{\mu}$ be a transfer operator associated with the reversible stochastic transition function $p$. Then $P$ satisfies properties (P1) and (P2) in~$\mathcal{L}^2_{\mu}$, if and only if~$p$ is $\mu$-irreducible and ($\mu$-a.e.) geometrically ergodic. The latter two conditions on~$p$ are satisfied, in particular, if~$p$ is geometrically ergodic.
\end{theorem}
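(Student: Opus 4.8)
The plan is to exploit that, by the reversibility--self-adjointness equivalence recalled above and the contraction property from Corollary~\ref{cor:opcontraction}, a reversible transfer operator $P$ is a self-adjoint contraction on $\mathcal{L}^2_{\mu}$, so that $\sigma(P)\subset[-1,1]$ and the self-adjoint functional calculus is available. First I would peel off the constants: since $\mu$ is invariant, the constant function $\mathbf 1$ satisfies $P\mathbf 1=\mathbf 1$, and $\mathcal{L}^2_{\mu}=\mathbb{C}\mathbf 1\oplus\mathcal{L}^2_{\mu,0}$ with $\mathcal{L}^2_{\mu,0}=\{f:\int f\,d\mu=0\}$ is a $P$-invariant orthogonal decomposition; set $P_0:=P|_{\mathcal{L}^2_{\mu,0}}$ and let $E$ be its spectral measure on $[-1,1]$. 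Using $\|P_0^n\|=\|P_0\|^n$ for self-adjoint operators, a short spectral-theoretic argument shows that (P1) and (P2) together are equivalent to the conjunction of (a) $\ker(P-I)=\mathbb{C}\mathbf 1$ and (b) $\|P_0\|<1$: the gap (b) forces $r_{\rm ess}(P)\le\|P_0\|<1$ and makes $\lambda=1$ the unique spectral value of modulus one, and conversely (P1) and (P2) confine $\sigma(P)\setminus\{1\}$ to a disk of radius strictly less than one (hence (b)) and in particular give (a).

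Next I would match (a) and (b) to ``$p$ is $\mu$-irreducible and ($\mu$-a.e.) geometrically ergodic''. For the forward direction, the bound $\|P^nf-\int f\,d\mu\|_{2,\mu}\le\|P_0\|^n\|f\|_{2,\mu}$, applied to indicators, gives $\langle P^n\chi_A,\chi_B\rangle_{2,\mu}\to\mu(A)\mu(B)>0$ whenever $\mu(A),\mu(B)>0$, which is $\mu$-irreducibility; applied to bounded $g$ it gives $\|P^ng-\mu(g)\|_{2,\mu}\le 2\|P_0\|^n$, and since self-adjointness identifies $P$ with its Koopman adjoint, $P^ng(x)=\int g\,dp^n(x,\cdot)$, a Borel--Cantelli/subsequence argument over a countable family of test functions extracts $M\in\mathcal{L}^1_{\mu}$ and $\hat\rho<1$ with $\|p^n(x,\cdot)-\mu\|_{\rm TV}\le M(x)\hat\rho^n$ for $\mu$-a.e.\ $x$. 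For the converse, geometric ergodicity gives $\|P^ng-\mu(g)\|_{1,\mu}\le\|g\|_\infty\rho^n\|M\|_{1,\mu}$ for bounded $g$, and since $|P^ng|\le\|g\|_\infty$ the elementary inequality $\|h\|_{2,\mu}^2\le\|h\|_{1,\mu}\|h\|_\infty$ yields $\langle P_0^{2n}g,g\rangle_{2,\mu}=\|P_0^ng\|_{2,\mu}^2\le 2\|M\|_{1,\mu}\|g\|_\infty^2\rho^n$ for every bounded $g$; moreover $\mu$-irreducibility forces uniqueness of the invariant measure (a harmonic-function argument) and hence (a).

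The hard part will be turning the last estimate — geometric decay of $\langle P_0^{2n}g,g\rangle_{2,\mu}$, but only with $\|g\|_\infty$ on the right — into the genuine operator bound $\|P_0\|<1$. I would argue by contradiction via the spectral measure: if $\sigma(P_0)$ met $[1-\varepsilon,1]\cup[-1,-1+\varepsilon]$ for arbitrarily small $\varepsilon>0$ then, since bounded functions are dense in $\mathcal{L}^2_{\mu,0}$ and $g\mapsto\langle E(J)g,g\rangle_{2,\mu}$ is $\mathcal{L}^2$-continuous for each Borel $J$, there would be a bounded $g$ with $c:=\langle E\big([1-\varepsilon,1]\cup[-1,-1+\varepsilon]\big)g,g\rangle_{2,\mu}>0$, whence $\langle P_0^{2n}g,g\rangle_{2,\mu}=\int\lambda^{2n}\,d\langle E_\lambda g,g\rangle_{2,\mu}\ge(1-\varepsilon)^{2n}c$, contradicting $\le 2\|M\|_{1,\mu}\|g\|_\infty^2\rho^n$ once $(1-\varepsilon)^2>\rho$; using the \emph{even} power $2n$ is what kills spectrum near $+1$ and near $-1$ simultaneously, so no separate aperiodicity step is needed. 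Finally the closing remark is immediate: honest geometric ergodicity (with $M\in\mathcal{L}^1_{\mu}$) is in particular $\mu$-a.e.\ geometric ergodicity, and because $p^n(x,B)\ge\mu(B)-M(x)\rho^n>0$ for $\mu$-a.e.\ $x$ and $n$ large it also implies $\mu$-irreducibility. Apart from that sup-norm-to-$\mathcal{L}^2$ upgrade, the whole argument is bookkeeping with the spectral theorem, Bayes' formula, and the density of bounded functions.
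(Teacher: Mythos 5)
First, a point of reference: the paper does not prove this statement at all --- it is quoted verbatim from Huisinga's thesis as \cite[Theorem~4.31]{Hui01}, so there is no in-paper proof to compare yours against and your attempt has to stand on its own. Much of it does. The reduction of (P1) and (P2) to the pair ``$\ker(P-I)=\mathbb{C}\mathbf 1$ and $\|P_0\|<1$'' is correct for a self-adjoint contraction, and your treatment of the direction ``irreducible and geometrically ergodic $\Rightarrow$ spectral gap'' is sound: passing from the sup-norm-weighted estimate $\langle P_0^{2n}g,g\rangle_{2,\mu}\le C\|g\|_\infty^2\rho^n$ to $\|P_0\|<1$ via the spectral measure and the density of bounded functions works, and using the even power $2n$ to exclude spectrum near $-1$ as well as near $+1$ is exactly the right way to absorb aperiodicity.

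The genuine gap is in the forward direction, precisely the step you wave through as ``a Borel--Cantelli/subsequence argument over a countable family of test functions extracts $M\in\mathcal{L}^1_{\mu}$''. From $\|P^ng-\mu(g)\|_{2,\mu}\le 2\|P_0\|^n$ you can conclude, for each \emph{fixed} $g$ with $\|g\|_\infty\le 1$ (or each set $B$ in a countable generating algebra), that $|P^ng(x)-\mu(g)|\le\hat\rho^n$ eventually for $\mu$-a.e.\ $x$, with a threshold depending on both $x$ and $g$. But the total variation norm is a supremum over the whole family, and the natural candidate $M(x)=\sup_{n,B}\hat\rho^{-n}\,|p^n(x,B)-\mu(B)|$ cannot be shown finite a.e., let alone $\mu$-integrable, by this route: $\int\sup_B(\cdot)\,d\mu$ is not controlled by $\sup_B\int(\cdot)\,d\mu$, and weighting or summing over the countable algebra destroys the bound. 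Unless the kernel admits an $\mathcal{L}^2_{\mu}$ transition density (which the theorem does not assume, and which would let you bound the TV norm uniformly in $B$ by Cauchy--Schwarz), this implication is the hard content of the result: it is the Roberts--Rosenthal/Roberts--Tweedie theorem that $\mathcal{L}^2(\mu)$-geometric convergence and $\mu$-a.e.\ geometric ergodicity are equivalent for reversible chains, and its proof goes through $\mu$-irreducibility, small sets and a geometric drift condition in the sense of Meyn and Tweedie, with the integrability of $M$ coming from the $\mu$-integrability of the drift function. You should either import that theorem explicitly or supply the drift-condition argument; the Borel--Cantelli route as described does not close.
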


Thus, we have shown
\begin{corollary}
If the potential $V$ satisfies either conditions in Proposition~\ref{prop:Langevin_geomerg}, then the spatial transfer operator $S^t:\LfQ{2}(\Qspace) \to \LfQ{2}(\Qspace)$ is self-adjoint and has the properties (P1) and (P2). These are exactly the conditions under which Theorem~\ref{metastability} holds.
\end{corollary}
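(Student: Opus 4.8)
The plan is to assemble the dynamical facts established in the two preceding paragraphs and pass them through Huisinga's spectral theorem; no new computation is needed, so at the level of this corollary the argument is essentially bookkeeping. First I would record self-adjointness. We have shown $p_S^t(A,B)=p_S^t(B,A)$ for all measurable $A,B\subset\Qspace$, obtained by combining the extended detailed balance property of the Langevin process (Lemma~\ref{lem:Langevin_rev}), the momentum symmetry $f_{\Pspace}(-p)=f_{\Pspace}(p)$, and the Bayes-formula identification of the reversed spatial transition function $p_{S,rev}^t$. This is precisely reversibility of the spatial Markov chain $\bm q_n$, so the restated \cite[Proposition~1.1]{Hui01} (the reversibility $\Leftrightarrow$ self-adjointness dictionary) gives that $S^t\colon\LfQ{2}(\Qspace)\to\LfQ{2}(\Qspace)$ is self-adjoint for $\langle\cdot,\cdot\rangle_{2,\muQ}$; in particular $\sigma(S^t)\subset\mathbb{R}$.

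Second I would record geometric ergodicity. Under either hypothesis of Proposition~\ref{prop:Langevin_geomerg} the sampled Langevin chain $\bm z_n=\bm x_{nt}$ is geometrically ergodic toward $\mucan$, i.e.\ \eqref{eq:Langevin_geomerg} holds. The estimate carried out above---averaging the total-variation distance against $\fcan/\fQ$ over the momenta, using $\int_{\Pspace}\fcan(q,p)/\fQ(q)\,dp=1$, and the fact that restricting the partitions in the definition of the total-variation norm to cylinder sets of the form $\cdot\times\Pspace$ can only decrease it---yields $\|p_S^{nt}((q,p),\cdot)-\muQ\|_{\rm TV}\le\tilde M(q)\rho^n$ with $\tilde M\in\Lw{1}{\muQ}{\Qspace}$. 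Hence the spatial transition function $p_S^t$ is geometrically ergodic, and in particular $\muQ$-irreducible.

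Third I would apply the restated \cite[Theorem~4.31]{Hui01}: for a transfer operator associated with a reversible transition function, $\mu$-irreducibility together with ($\mu$-a.e.) geometric ergodicity---both implied by plain geometric ergodicity---are equivalent to properties (P1), $r_{\rm ess}(S^t)<1$, and (P2), that $\lambda=1$ is a simple dominant eigenvalue. Since $S^t$ is associated with the reversible, geometrically ergodic $p_S^t$ by the first two steps, it satisfies (P1) and (P2). Combined with the self-adjointness from the first step, these are exactly the hypotheses of \cite[Theorem~2]{Hui05}, i.e.\ of Theorem~\ref{metastability}, which closes the argument.

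I expect no genuine obstacle at the level of this corollary itself; the delicate points sit one level down. The first is keeping the momentum reflection $\bm w_t\mapsto-\bm w_t$ and the substitution $\tilde p=-p$ consistent in Lemma~\ref{lem:Langevin_rev}, so that the Jacobian sign $(-1)^d$ cancels against flipping the integration limits and one really lands on the reversed Langevin SDE. The second is the measure-theoretic claim, used in the ergodicity estimate, that projecting the total-variation partitions onto the position marginal does not increase the norm. Granting those two verifications, the corollary follows immediately.
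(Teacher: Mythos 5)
Your proposal matches the paper's own argument exactly: the corollary is stated there with ``Thus, we have shown'' precisely because it is the bookkeeping assembly of the reversibility paragraph (Lemma~\ref{lem:Langevin_rev} plus the Bayes-formula lemma giving $p_S^t(A,B)=p_S^t(B,A)$, hence self-adjointness via the restated \cite[Proposition~1.1]{Hui01}), the geometric-ergodicity estimate obtained by momentum-averaging \eqref{eq:Langevin_geomerg}, and the citation of \cite[Theorem~4.31]{Hui01} to conclude (P1) and (P2). The two ``delicate points'' you flag are likewise exactly the two nontrivial verifications the paper carries out in the preceding lemmas, so there is nothing to add.
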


\section{Smoothness of eigenfunctions}	\label{sec:smoothness}

We are going to show in this section that if Assumption~\ref{assu:smoothness} holds for the potential, then the eigenfunctions of the
\begin{enumerate}[(i)]
\item spatial transfer operator $S^t$ associated with the Langevin process~\eqref{langevindynamics}, and of the
\item  generator~$G$ of the Smoluchowski process~\eqref{smoluchowskidynamics}
\end{enumerate}
are smooth, i.e.\ $\mathcal{C}^{\infty}$ functions. Qualitative results of this kind go back to H\"ormander~\cite{Hor67}, where hypoelliptic diffusions have been considered. Indeed, what is known as \emph{H\"ormander's condition}, is equivalent to $W_{\ell}(x)>0$ for some $\ell\ge 1$, the function $W_{\ell}$ being defined below. Since the spatial transfer operator involves an averaging over the momenta, we will require quantitative estimates on the smoothness (cf.~\eqref{eq:smoothbound}) of transition density functions in order to carry over the smoothness to the spatial transition density function, and to the  eigenfunctions of~$S^t$.
For this we will use the results based on Malliavin calculus.

\paragraph{The general theory}
Consider the stochastic differential equation
\begin{equation}
\partial_t \bm x_t = b_0(\bm x_t) + \sum_{i=1}^m b_i(\bm x_t)\bm w_t^i,
\label{eq:SDEgeneral}
\end{equation}
where the $b_0,b_1,\ldots,b_m$ are vector fields over $\mathbb{R}^d$, and the $\bm w_t^i$ are independent standard one-dimensional ``white noise'' processes. Note that both the Langevin and the Smoluchowski differential equations we consider here are special cases of this, e.g.\ the latter with~$m=d$ and $b_i\equiv\textrm{const}\cdot e_i$, $i=1,2,\ldots,d$, with a suitable constant, and $e_i\in\mathbb{R}^d$ being the canonical unit vectors.

For two differentiable vector fields $v_1, v_2:\mathbb{R}^d\to\mathbb{R}^d$ define their \emph{Lie bracket} by $[v_1,v_2](x) = Dv_2(x)v_1(x) - Dv_1(x)v_2(x)$, where $Dv$ is the derivative of $v$ with entries $(Dv(x))_{ij} = \tfrac{\partial v_i}{\partial x_j}(x)$. Further, for a multi-index $\alpha = (\alpha_1,\ldots,\alpha_k)\in \{0,1,\ldots,m\}^k\cup\{\emptyset\}$ define $b_i^{\alpha}$, $1\le i\le m$, by induction: $b_i^{\emptyset} = b_i$, $1\le i\le m$, and $b_i^{(\alpha,j)} = [b_j,b_i^{\alpha}]$, $0\le j\le m$, where $(\alpha,j) = (\alpha_1,\ldots,\alpha_k,j)$. Also, let $(\emptyset,j)=j$. For a multi-index $\alpha$ define
\begin{align*}
|\alpha| &= \left\{\begin{array}{ll}
0, & \text{if }\alpha=\emptyset\\
\ell, & \text{if }\alpha\in\{0,\ldots,m\}^{\ell}
\end{array}\right.\\
\|\alpha\| &= \left\{\begin{array}{ll}
0 ,& \text{if }\alpha=\emptyset\\
|\alpha| + \textrm{card}\{j\,\vert\,\alpha_j=0\}, & \text{if }|\alpha|\ge 1
\end{array}\right.
\end{align*}
Now, for some $\ell\in\mathbb{N}$ and $x,\eta\in\mathbb{R}^d$ let
\begin{align*}
W_{\ell}(x,\eta) &= \sum_{i=1}^m\sum_{\|\alpha\|\le \ell-1} (\eta\cdot b_i^{\alpha}(x))^2,\\
W_{\ell}(x) &= 1 \wedge \inf_{\|\eta\|_2=1}W_{\ell}(x,\eta) := \min\{1,\inf_{\|\eta\|_2=1}W_{\ell}(x,\eta)\},
\end{align*}
where the dot denotes the usual scalar product and $\|\cdot\|_2$ the Euclidean norm. Kusuoka and Stroock prove the following result; see also in~\cite{BaTa95}.
\begin{theorem}[{\cite[Corollary 3.25]{KuSt85}}]	\label{thm:smoothness}
Let $b_0,\ldots,b_m$ be $\mathcal{C}^{\infty}$ with all derivatives of order greater or equal one be bounded, and fix some $\ell\in\mathbb{N}$. Then for every $x\in A_{\ell}:=\{x\in\mathbb{R}^d\,\vert\,W_{\ell}(x)>0\}$, $t>0$, the stochastic transition function $p^t$ of~\eqref{eq:SDEgeneral} has a smooth (i.e.\ $\mathcal{C}^{\infty}$ in every variable) transition density function $k$ with respect to the Lebesgue measure, i.e.\ $p^t(x,dy) = k(t,x,y)dy$.\\
Moreover, for $n,n_x,n_y\in\mathbb{N}_{\ge0}$, multi-indices $\alpha\in\{0,1,\ldots,n_x\}^d$, $\beta\in\{0,1,\ldots,n_y\}^d$, and some $T>0$, there are constants $c_1, c_2, r>0$, independent of $x$ and $y$, such that
\begin{equation}
\big|\partial_t^n\partial_x^{\alpha}\partial_y^{\beta}k(t,x,y)\big| \le \frac{c_1}{t^rW_{\ell}(x)}\exp\left(-c_2\frac{\|x-y\|_2^2}{t}\right)\quad\forall x\in A_{\ell},\ y\in\mathbb{R}^d,\ 0<t\le T.
\label{eq:smoothbound}
\end{equation}
\end{theorem}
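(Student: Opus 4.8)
The plan is to derive Theorem~\ref{thm:smoothness} by Malliavin calculus, following the circle of ideas around H\"ormander's hypoellipticity theorem; I sketch only the architecture, since every step is genuinely technical and the full statement is a deep result of Kusuoka and Stroock. First I would place the flow of~\eqref{eq:SDEgeneral} into the Malliavin--Sobolev space $\mathbb{D}^{\infty}$: because $b_0,\ldots,b_m$ are $\mathcal{C}^{\infty}$ with all derivatives of order $\ge 1$ bounded, the solution $\bm x_t$ is infinitely Malliavin differentiable, and differentiating the SDE with respect to the driving paths produces the first variation (Jacobian) process $J_t$, which solves the linearized equation, and---by iteration---all higher Malliavin derivatives, each with $L^p$ bounds uniform on $[0,T]$. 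Boundedness of the coefficients' derivatives is precisely what keeps these estimates finite.

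Next I would invoke Malliavin's smoothness criterion. If the Malliavin covariance matrix $C_t = \big(\langle D\bm x_t^i, D\bm x_t^j\rangle\big)_{ij}$, with the inner product taken in $L^2([0,t];\mathbb{R}^m)$, is a.s.\ invertible and $\det C_t^{-1}\in\bigcap_{p\ge 1}L^p(\mathbb{P})$, then the law of $\bm x_t$ has a $\mathcal{C}^{\infty}$ Lebesgue density, and every derivative admits an integration-by-parts representation $\mathbb{E}[\partial^{\gamma}\phi(\bm x_t)] = \mathbb{E}[\phi(\bm x_t)\,H_{\gamma}]$ with weights $H_{\gamma}$ assembled from $C_t^{-1}$, the Malliavin derivatives of $\bm x_t$, and Skorokhod integrals. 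Localizing this at $x$ in the initial condition and approximating $\phi$ by mollified point masses at $y$ yields pointwise formulas for $k(t,x,y)$ and for $\partial_t^n\partial_x^{\alpha}\partial_y^{\beta}k(t,x,y)$, which is exactly what \eqref{eq:smoothbound} quantifies.

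The heart of the proof is the quantitative non-degeneracy of $C_t$ with the explicit dependence on $W_{\ell}$. Using $J_t$ one rewrites, for a unit vector $\eta$, the form $\eta^{\top}C_t\eta$ as a time integral of the squared projections of the transported $\eta$ onto the $b_i(\bm x_s)$; an inductive application of Norris's lemma---the quantitative statement that if a continuous semimartingale stays small, then so do its martingale and bounded-variation parts---propagates the lower bound from the $b_i$ to all iterated Lie brackets $b_i^{\alpha}$ with $\|\alpha\|\le \ell-1$. Since $W_{\ell}(x)>0$, this produces, off an event of small probability, an estimate of the form $\eta^{\top}C_t\eta \ge c\, t^{r'} W_{\ell}(x)\,\|\eta\|_2^2$ uniformly in $\|\eta\|_2=1$, hence the required moments of $C_t^{-1}$ with constants governed by $1/W_{\ell}(x)$ and negative powers of $t$; this is the origin of the $t^{-r}W_{\ell}(x)^{-1}$ prefactor. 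The Gaussian off-diagonal factor $\exp(-c_2\|x-y\|_2^2/t)$ is then recovered by feeding into the integration-by-parts representation of $\partial_t^n\partial_x^{\alpha}\partial_y^{\beta}k$ the exponential tail bound for $\|\bm x_t-x\|_2$ coming from standard exponential-martingale estimates, and summing over the finitely many multi-indices $\alpha$ with $\|\alpha\|\le\ell-1$.

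The main obstacle is the quantitative lower bound on $C_t$ in terms of $W_{\ell}(x)$, as opposed to a merely qualitative one: Norris's lemma and its inductive use over the bracket hierarchy are delicate, and it is the explicit tracking of constants there---the exact power $r$ of $t$ and the $W_{\ell}(x)^{-1}$ dependence---that is both the technical crux and the feature that makes Theorem~\ref{thm:smoothness} strong enough to later carry smoothness through the momentum average and conclude $\mathcal{C}^{\infty}$-regularity of the eigenfunctions of $S^t$.
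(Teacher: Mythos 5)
The paper does not prove this statement at all: it is imported verbatim as Corollary 3.25 of Kusuoka--Stroock \cite{KuSt85} (``Kusuoka and Stroock prove the following result''), so there is no in-paper argument to compare against. Your outline correctly identifies the architecture of the actual proof in the cited source --- Malliavin differentiability of the flow under the bounded-derivative hypothesis, the integration-by-parts/smooth-density criterion via the Malliavin covariance matrix $C_t$, a quantitative non-degeneracy estimate of $C_t$ driven by the bracket condition $W_{\ell}(x)>0$ (which is where the $t^{-r}W_{\ell}(x)^{-1}$ prefactor originates), and Gaussian tail estimates for the off-diagonal factor. Two caveats: what you call ``Norris's lemma'' is historically a 1986 repackaging of the key martingale estimate that Kusuoka and Stroock themselves prove in the 1985 paper, and the logic of that step runs in the contrapositive direction from how you phrase it (smallness of $\eta^{\top}C_t\eta$ forces smallness of the projections onto all brackets with $\|\alpha\|\le\ell-1$, contradicting $W_{\ell}(x)>0$, rather than a lower bound being ``propagated'' outward from the $b_i$). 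More importantly, your text is explicitly a sketch: the two steps that carry all the difficulty --- the inductive quantitative estimate over the bracket hierarchy with explicit tracking of the powers of $t$ and of $W_{\ell}(x)$, and the assembly of the integration-by-parts weights $H_{\gamma}$ for mixed $t$-, $x$-, and $y$-derivatives --- are named but not executed, so this cannot be accepted as a proof, only as an accurate roadmap to the literature the paper itself defers to.
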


\paragraph{Smoothness of the Smoluchowski eigenfunctions}
Consider the Smoluchowski equation
\[
\partial_t \bm q_t = -\nabla V(\bm q_t) + \sigma\bm w_t,
\]
with $\sigma>0$. Note that if~$V$ satisfies the conditions of Assumption~\ref{assu:smoothness}, then the Smoluchowski SDE automatically satisfies the first condition of Theorem~\ref{thm:smoothness}.

We have that $b_i^{\emptyset}\equiv \sigma e_i$, the~$i^{\rm th}$ column of the~$d\times d$ matrix~$\sigma I$ (here,~$I$ denotes the identity matrix). Hence, with $B = \sigma I$,
\begin{equation}
W_1(q) = 1\wedge \inf_{\|\eta\|_2=1}\sum_{i=1}^d (\eta\cdot\sigma e_i)^2 = 1\wedge \inf_{\|\eta\|_2=1}\|\eta^T B\|_2^2 = 1\wedge \sigma^2>0,\quad\forall q\in\mathbb{R}^d\,.
\label{eq:Wbound}
\end{equation}
In particular, by Theorem~\ref{thm:smoothness}, for all~$q\in\mathbb{R}^d$ the stochastic transition function of the above Smoluchowski equation has a smooth transition density function $k(t,q,\hat q)$ with
\begin{equation}
|\partial_y^{\alpha}k(t,q,\hat q)| \le \tilde{c}_1 \exp(-\tilde{c}_2\|q-\hat q\|_2^2),\quad \forall q,\hat q\in\mathbb{R}^d,
\label{eq:derivativebound}
\end{equation}
for a fixed $t>0$, with some constants $\tilde{c}_1, \tilde{c}_2$ independent of $q$ and $\hat q$.

Let $G$ and $\ops{t}$ denote the generator and the transfer operator (with respect to the Lebesgue measure) of the Smoluchowski process, respectively. Now, by the Spectral Mapping Theorem (Theorem~\ref{spectralmappingtheorem}) $G u = \lambda u$ if and only if $\ops{t} u = e^{\lambda t}u$ for any $t\ge 0$, i.e.\
\[
e^{\lambda t}u(\hat q) = \int_{\mathbb{R}^d} u(q) k(t,q,\hat q)\,dq.
\] 
By Lebesgue's theorem on differentiating parameter-dependent integrals, the right hand side of this equation is continuously differentiable (with respect to~$\hat q$) because for any $i\in\{1,\ldots,d\}$,
\[
|u(q) \partial_{\hat{q}_i}k(t,q,\hat q)| \le |u(q)|\,\tilde{c}_1 \exp\left(-\tilde{c}_2\|q-\hat q\|_2^2\right) \le \tilde{c}_1|u(q)|,
\]
which is an integrable function. This argument can be iterated for derivatives of any order, showing that $u(q)$, the eigenfunction of $G$, is smooth.

\begin{remark}	\label{rem:measurechange}
We have also considered the generator and semigroup with respect to the canonical measure with density $f_{\Qspace}(q) \propto \exp(-\beta V(q))$. As noted in the discussion preceding Corollary~\ref{g2smolu}, the densities of the transfer operator with respect to the Lebesgue and the canonical measure differ only up to a factor $f_{\Qspace}(q)$, which is a nowhere zero smooth function. Thus the Smoluchowski eigenfunctions are smooth, no matter with respect to which of these both measures we consider them.
\end{remark}

\paragraph{Smoothness of the spatial eigenfunctions}
Consider the Langevin equation, as in \eqref{langevindynamics},
\[
\begin{aligned}
\partial_t\bm{q}_t &= \bm{p}_t\\
\partial_t\bm{p}_t &= -\nabla V(\bm{q}_t) - \gamma \bm{p}_t +\sigma \bm{w}_t~,
\end{aligned}
\]
with $\gamma,\sigma>0$.

In order to show the smoothness of the (time-dependent) spatial eigenfunctions satisfying $S^t u^t = \lambda_t u^t$, our strategy is the same as for the Smoluchowski case. First, we show that $W_{\ell}$ is uniformly bounded away from zero as in~\eqref{eq:Wbound}, then apply Theorem~\ref{thm:smoothness} to imply a bound on the derivatives of the transition density function $k$, as in~\eqref{eq:derivativebound}. From this, using Lebesgue's theorem, we show the smoothness of the right hand side of the eigenvalue equation
\[
\lambda_t u(\hat q) = S^t u^t(\hat q) = \frac{1}{f_{\Qspace}(q)}\int_{\mathbb{R}^d}\iint_{\mathbb{R}^{2d}} u^t(q)\fcan(q,p)k\big(t,\left(q,p\right),\left(\hat q,\hat p\right)\big)\,d(q,p)\,d\hat p\,.
\]
Apart from bounding $W_{\ell}$, every step is analogous as in the Smoluchowski case, hence we will omit them.

As for $W_{\ell}$, note that for the Langevin equation
\[
b_0(q,p) = \begin{pmatrix}
p \\ -\gamma p - \nabla V(q)
\end{pmatrix},
\qquad b_i(q,p) = \begin{pmatrix}
0\\ \sigma e_i.
\end{pmatrix},\ i\in\{1,\ldots,d\}.
\]
It follows that (see also~\cite[Theorem~3.2]{MaStHi02})
\[
b_i^{\emptyset}\equiv\begin{pmatrix}
0\\ \sigma e_i.
\end{pmatrix},\qquad b_i^{0}\equiv \begin{pmatrix}
\sigma e_i \\ -\gamma \sigma e_i
\end{pmatrix},\ i\in\{1,\ldots,d\}.
\]
Note that these latter $2d$ vectors span $\mathbb{R}^{2d}$, hence the $2d\times 2d$ matrix~$B$ with columns~$b_i^{\emptyset}$ and~$b_i^0$ is non-singular. Recalling the definition of~$W_{\ell}$, we have that
\[
W_3\big((q,p)\big) \ge 1\wedge \inf_{\|\eta\|_2=1}\|\eta^T B\|_2^2 > s_{\min}^2\quad\forall q,p\in\mathbb{R}^d,
\]
where $s_{\min}>0$ is the smallest singular value of~$B$. Thus, $W_3$ is uniformly bounded away from zero, concluding the proof of smoothness of~$u^t$ for any~$t>0$:
\begin{proposition}	\label{prop:smooth_spatial}
If the potential satisfies Assumption~\ref{assu:smoothness}, then the eigenfunctions of the spatial transfer operator~$S^t$ are smooth for every~$t>0$.
\end{proposition}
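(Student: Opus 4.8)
I would mirror the argument just given for the Smoluchowski eigenfunctions: establish a H\"ormander-type bracket condition that is \emph{uniform} over phase space, invoke the Kusuoka--Stroock estimates of Theorem~\ref{thm:smoothness} to obtain a smooth transition density with Gaussian-type derivative bounds, and then differentiate the eigenvalue equation under the integral sign. Fix $t>0$ once and for all and let $(\lambda_t,u^t)$ be an eigenpair of $S^t$ on $\LfQ{2}(\Qspace)$; since $\muQ$ is a probability measure, $u^t\in\LfQ{1}(\Qspace)$ as well.

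\textbf{Step 1 (uniform bracket condition).} Write the Langevin SDE in the form~\eqref{eq:SDEgeneral} with $b_0(q,p)=(p,\,-\gamma p-\nabla V(q))^{\intercal}$ and $b_i(q,p)=(0,\,\sigma e_i)^{\intercal}$, $i=1,\ldots,d$. The noise fields $b_i^{\emptyset}$ span only the momentum directions, but one Lie bracket with the drift produces the \emph{constant} fields $b_i^{0}=[b_0,b_i^{\emptyset}]\equiv(\sigma e_i,\,-\gamma\sigma e_i)^{\intercal}$, and the $2d$ vectors $\{b_i^{\emptyset},b_i^{0}\}_{i=1}^d$ are linearly independent. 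Letting $B$ be the constant $2d\times 2d$ matrix whose columns they are, and noting that the relevant multi-indices satisfy $\|\alpha\|\le 2$, I get
\[
W_3\big((q,p)\big)\ \ge\ 1\wedge\inf_{\|\eta\|_2=1}\|\eta^{\intercal}B\|_2^2\ =\ 1\wedge s_{\min}(B)^2\ >\ 0,
\]
a bound \emph{independent} of $(q,p)\in\mathbb{R}^{2d}$. Under Assumption~\ref{assu:smoothness} the vector fields $b_0,\ldots,b_d$ are $\mathcal{C}^{\infty}$ with all derivatives of order $\ge 1$ bounded, so Theorem~\ref{thm:smoothness} applies with $\ell=3$ on all of $\mathbb{R}^{2d}$.

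\textbf{Steps 2--3 (density bounds and differentiation under the integral).} Theorem~\ref{thm:smoothness} then yields, for the fixed $t$, a smooth Langevin transition density $k\big(t,(q,p),(\hat q,\hat p)\big)$ satisfying $\big|\partial^{\beta}_{(\hat q,\hat p)}k\big|\le c_1\exp\!\big(-c_2\|(q,p)-(\hat q,\hat p)\|_2^2\big)$ with $c_1,c_2>0$ uniform, precisely because of the uniform lower bound on $W_3$ from Step~1. Integrating out $\hat p$ (legitimate by this bound) gives a spatial transition density $\kappa(t,(q,p),\hat q):=\int_{\Pspace}k\,d\hat p$ that is smooth in $\hat q$ with $\big|\partial_{\hat q}^{\beta}\kappa\big|\le C\exp\!\big(-c_2\|q-\hat q\|_2^2\big)$. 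By~\eqref{eq:spatial_general} the eigenvalue equation becomes
\[
\lambda_t\,u^t(\hat q)\,\fQ(\hat q)\ =\ \iint_{\mathbb{R}^{2d}} u^t(q)\,\fcan(q,p)\,\kappa\big(t,(q,p),\hat q\big)\,d(q,p),
\]
and the $\hat q$-derivatives of the integrand are dominated, uniformly in $\hat q$, by $C\,|u^t(q)|\,\fcan(q,p)$, which is integrable over $\mathbb{R}^{2d}$ since $u^t\in\LfQ{1}(\Qspace)$ and $f_{\Pspace}$ is a probability density. Lebesgue's theorem on differentiating parameter-dependent integrals then shows the right-hand side is $\mathcal{C}^1$ in $\hat q$; iterating over all multi-indices $\beta$ gives $\mathcal{C}^{\infty}$. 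Finally, dividing by $\fQ(\hat q)\propto e^{-\beta V(\hat q)}$, a nowhere-zero $\mathcal{C}^{\infty}$ function, preserves smoothness, so $u^t\in\mathcal{C}^{\infty}(\Qspace)$.

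\textbf{Main obstacle.} The only genuinely non-routine step is Step~1. One must recognise that although the Langevin noise is degenerate -- acting solely on the momenta -- a single Lie bracket with the drift recovers the missing position directions, and, more importantly, that these bracket fields are \emph{constant}, so the H\"ormander function $W_3$ is bounded below by a strictly positive constant \emph{uniformly} in phase space. This uniformity is exactly what neutralises the Kusuoka--Stroock prefactor $1/W_{\ell}(x)$ and lets the dominated-convergence argument -- which integrates over all of $\mathbb{R}^{2d}\times\Pspace$ -- go through; the remaining steps transcribe directly from the Smoluchowski argument.
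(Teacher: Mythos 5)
Your proposal is correct and follows essentially the same route as the paper: you verify the uniform H\"ormander condition via the single Lie bracket $b_i^0$ with the drift, obtain $W_3 \ge 1\wedge s_{\min}(B)^2>0$ uniformly, and then transfer the Kusuoka--Stroock derivative bounds through the momentum average to differentiate the eigenvalue equation under the integral sign. The only difference is that you write out Steps~2--3 explicitly (including the correct placement of $\fQ(\hat q)$ and the domination by $C\,|u^t|\,\fcan$), whereas the paper declares these steps analogous to the Smoluchowski case and omits them.
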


\paragraph{From $\mathbb{R}^d$ to periodic systems}
Since in molecular dynamics often periodic angular coordinates are used, it is interesting to see whether the smoothness results from above apply also for periodic systems on a torus.

To this end, let $X = \mathbb{T}^d$ be the unit torus, which we identify with $[0,1)^d\subset\mathbb{R}^d$. We consider an It\^{o} diffusion of the form~\eqref{itodiffusion} on~$X$. Let~$b:\mathbb{T}^d\to\mathbb{R}^d$ and~$\Sigma:\mathbb{T}^d\to\mathbb{R}^{d\times d}$ be smooth functions.

Set $\widehat{b}$ and $\widehat{\Sigma}$ to be the periodic extensions of $b$ and $\Sigma$ to $\mathbb{R}^d$, respectively, i.e.\ $\widehat{b}(x+r) = b(x)$ for all $x\in [0,1)^d$ and $r\in \mathbb{Z}^d$, and analogously for $\Sigma$. Assume that $b$ and $\Sigma$ are such that $\widehat{b}$ and $\widehat{\Sigma}$ satisfy all conditions of Theorem~\ref{thm:smoothness}, and that there is an~$\ell\ge 1$ such that~$W_{\ell}$ is uniformly bounded away from zero (as it is the case for the Smoluchowski and Langevin dynamics). Then the associated SDE has a smooth transition density function~$\widehat{k}$ such that its derivatives of any fixed order show an exponential decay:
\[
\left|\partial_y^{\alpha}\widehat{k}(t,x,y+r)\right|  = \mathcal{O}\big(\exp(-c\|r\|_2^2)\big)\ \text{as } \|r\|_2\to\infty\,,
\]
for $x,y\in [0,1)^d$ and some $c>0$ independent of $x,y$. An analogous bound holds for derivatives with respect to~$t$ as well.

Note that for a fixed $x$, the function $(t,y)\mapsto \widehat{k}(t,x,y)$ solves the Fokker--Planck equation~\eqref{fokkerplanckequationito} on $\mathbb{R}_+\times\mathbb{R}^d$. Thus, since this partial differential equation is linear, and $\widehat{b}$ and $\widehat{\Sigma}$ are periodic, the transition density function
\begin{equation}
k(t,x,y) := \sum_{r\in\mathbb{Z}^d}\widehat{k}(t,x,y+r)
\label{eq:collapseddensity}
\end{equation}
formally solves the Fokker--Plack equation on~$\mathbb{T}^d$ associated with~$b$ and~$\Sigma$. For this statement to be rigorous, we have to show that~$k(t,x,y)$, defined as here, is continuously differentiable in~$t$ and twice continuously differentiable in~$y$. This is achieved by showing the uniform convergence of the summand-wise differentiated sum in~\eqref{eq:collapseddensity}. Setting $\rho=\|r\|_2$, observe that there are $\mathcal{O}(\rho^{d-1})$ many sets of the form $[0,1)^d+r$, $r\in\mathbb{Z}^d$, intersected by a sphere of radius~$\rho$. Hence, we can estimate
\[
\sum_{r\in\mathbb{Z}^d}\left|\partial_y^{\alpha}\widehat{k}(t,x,y+r)\right| \le C \sum_{r\in\mathbb{Z}^d} \exp\left(-c\|r\|_2^2\right) \le \tilde{C} \int_0^{\infty}\rho^{d-1}e^{-c\rho^2}\,d\rho < \infty\,,
\]
with some constants $c,C,\tilde{C}>0$, independent of~$y$. This shows uniform convergence of the sum, and the smoothness of~$k(t,x,y)$ in~$y$. An analogous computation can be done for~$t$. Thus,~$k$ is smooth and solves the Fokker--Plack equation on~$\mathbb{T}^d$. Since $\widehat{k}(t,x,\cdot)$ converges (weakly) to the Dirac distribution centered in~$x$ as~$t\to 0$, so does~$k(t,x,\cdot)$. These last two properties show that~$k$ is the (unique) transition density function associated with the It\^{o} diffusion on the torus; and we have shown that it is smooth.

Note that if the potential~$V:\mathbb{T}^d\to\mathbb{R}$ is smooth, then $\widehat{V}(x+r):= V(x)$, $r\in\mathbb{Z}^d$, readily satisfies Assumption~\ref{assu:smoothness}. Thus we have
\begin{proposition}	\label{prop:smooth_periodic_spatial}
Let the potential $V:\mathbb{T}^d\to\mathbb{R}$ be smooth. Then the eigenfunctions of the associated spatial transfer operator~$S^t$ are smooth for every~$t>0$.
\end{proposition}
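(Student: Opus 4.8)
The strategy is to mirror the proof of Proposition~\ref{prop:smooth_spatial} (and of the Smoluchowski case treated above), the only genuinely new ingredient being the passage from the Langevin dynamics on $\mathbb{R}^d\times\Pspace$ with a periodic potential to the dynamics on the ``semi-periodic'' phase space $\mathbb{T}^d\times\Pspace$, which is periodic in the position coordinate only. Since $V:\mathbb{T}^d\to\mathbb{R}$ is smooth, its $\mathbb{Z}^d$-periodic extension $\widehat V:\mathbb{R}^d\to\mathbb{R}$ satisfies Assumption~\ref{assu:smoothness}, so the computation in the proof of Proposition~\ref{prop:smooth_spatial} applies verbatim and shows that $W_3\big((q,p)\big)$ is uniformly bounded away from zero on $\mathbb{R}^d\times\Pspace$. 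Theorem~\ref{thm:smoothness} then yields, for each fixed $t>0$, a $\mathcal{C}^{\infty}$ transition density $\widehat k\big(t,(q,p),(\hat q,\hat p)\big)$ for the lifted Langevin SDE on $\mathbb{R}^d\times\Pspace$ obeying the bound~\eqref{eq:smoothbound}.

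Next I would fold only in the position variable, as in~\eqref{eq:collapseddensity}, setting
$$
k\big(t,(q,p),(\hat q,\hat p)\big):=\sum_{r\in\mathbb{Z}^d}\widehat k\big(t,(q,p),(\hat q+r,\hat p)\big).
$$
The counting argument from the paragraph ``From $\mathbb{R}^d$ to periodic systems'' is unaffected by the fact that we now sum a $d$-dimensional lattice inside a $2d$-dimensional space: a sphere of radius $\rho$ still meets $\mathcal{O}(\rho^{d-1})$ of the translated unit cells, while the Gaussian factor in~\eqref{eq:smoothbound} contributes $\exp\!\big(-c_2\|q-\hat q-r\|_2^2/t\big)$, whose lattice sum converges, together with all $\hat q$- and $t$-derivatives, uniformly on compact sets. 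Hence $k$ is $\mathcal{C}^{\infty}$, formally solves the Fokker--Planck equation on $\mathbb{T}^d\times\Pspace$, and, since $\widehat k(t,x,\cdot)\to\delta_x$ weakly as $t\to 0$ and therefore so does $k$, it is the transition density of the Langevin process on $\mathbb{T}^d\times\Pspace$. Folding converts~\eqref{eq:smoothbound} into the uniform estimate
$$
\big|\partial_{\hat q}^{\alpha}k\big(t,(q,p),(\hat q,\hat p)\big)\big|\le C(t)\,\exp\!\big(-c_2\|p-\hat p\|_2^2/t\big),\qquad q,\hat q\in\mathbb{T}^d,\ p,\hat p\in\Pspace,
$$
where $C(t)<\infty$ absorbs $t^{-r}$, the uniform lower bound on $W_3$, and the bounded position lattice sum.

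Finally I would differentiate under the integral in the eigenvalue equation. An eigenfunction $u^t$ of $S^t$ to an eigenvalue $\lambda_t$ satisfies
$$
\lambda_t\,u^t(\hat q)=S^tu^t(\hat q)=\frac{1}{\fQ(\hat q)}\int_{\Pspace}\int_{\mathbb{T}^d\times\Pspace}u^t(q)\,\fcan(q,p)\,k\big(t,(q,p),(\hat q,\hat p)\big)\,d(q,p)\,d\hat p.
$$
Because $\mathbb{T}^d$ is compact and $\muQ$ a probability measure, $u^t\in\LfQ{2}(\Qspace)\subset\LfQ{1}(\Qspace)$, so $\int_{\mathbb{T}^d}|u^t(q)|\fQ(q)\,dq<\infty$; combined with the estimate above, the function $(q,p,\hat p)\mapsto C(t)\,|u^t(q)|\,\fQ(q)\,f_{\Pspace}(p)\,\exp(-c_2\|p-\hat p\|_2^2/t)$ dominates $u^t(q)\fcan(q,p)\,\partial_{\hat q}^{\alpha}k$ and is integrable on $\mathbb{T}^d\times\Pspace\times\Pspace$ (the $\hat p$-integral of the Gaussian is a finite constant and $f_{\Pspace}$ integrates to $1$), uniformly in $\hat q$. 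Lebesgue's theorem on parameter-dependent integrals, iterated over all multi-indices $\alpha$ and combined with the smoothness of the nowhere-vanishing factor $1/\fQ$, shows that $S^tu^t$ is $\mathcal{C}^{\infty}$; for $\lambda_t\ne 0$ this gives $u^t=\lambda_t^{-1}S^tu^t\in\mathcal{C}^{\infty}$, and for $\lambda_t=0$ we have $u^t=0$, which is trivially smooth. The main obstacle is the second step: verifying that the folding construction and the exponential-decay estimate survive the restriction to a phase space periodic in only part of its coordinates; once that is in place, the remainder is the same dominated-convergence argument used for the $\mathbb{R}^d$ and Smoluchowski cases.
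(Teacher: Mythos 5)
Your proposal is correct and follows essentially the same route as the paper: fold the whole-space transition density over the lattice (the paper does this in the paragraph ``From $\mathbb{R}^d$ to periodic systems''), verify uniform convergence of the differentiated lattice sum via the $\mathcal{O}(\rho^{d-1})$ shell-counting argument and the Gaussian bound of Theorem~\ref{thm:smoothness}, and then differentiate under the integral in the eigenvalue equation exactly as in the Smoluchowski and full-space Langevin cases --- indeed you are more explicit than the paper about the one point it glosses over, namely that for Langevin dynamics the phase space $\mathbb{T}^d\times\Pspace$ is periodic only in the position coordinate, so the folding is over $r\in\mathbb{Z}^d$ acting on $\hat q$ alone, which leaves both the counting argument and the residual Gaussian decay in $\|p-\hat p\|_2$ intact. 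The only blemish is your closing aside that an eigenfunction to $\lambda_t=0$ must vanish (it need not; it merely lies in the kernel of $S^t$), but this degenerate case is irrelevant to the metastability analysis and is not treated by the paper either.
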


\bibliographystyle{siam}
\bibliography{Bibliography_googlescholar}

\begin{thebibliography}{10}

\bibitem{BaTa95}
{\sc V.~Bally and D.~Talay}, {\em The law of the {E}uler scheme for stochastic
  differential equations: {II}. convergence rate of the density}, Probability
  theory and related fields, 104 (1996), pp.~43--60.

\bibitem{BaRo95}
{\sc J.~R. Baxter and J.~S. Rosenthal}, {\em Rates of convergence for
  everywhere-positive markov chains}, Statistics \& probability letters, 22
  (1995), pp.~333--338.

\bibitem{BiHaKoJu15}
{\sc A.~Bittracher, C.~Hartmann, O.~Junge, and P.~Koltai}, {\em Pseudo
  generators for under-resolved molecular dynamics}, The European Physical
  Journal ST, forthcoming (2015).

\bibitem{BHP09}
{\sc G.~R. Bowman, X.~Huang, and V.~S. Pande}, {\em Using generalized ensemble
  simulations and markov state models to identify conformational states},
  Methods, 49 (2009), pp.~197--201.

\bibitem{ChoEtAl07}
{\sc J.~D. Chodera, N.~Singhal, V.~S. Pande, K.~A. Dill, and W.~C. Swope}, {\em
  Automatic discovery of metastable states for the construction of markov
  models of macromolecular conformational dynamics}, The Journal of chemical
  physics, 126 (2007), p.~155101.

\bibitem{Cra04}
{\sc C.~Cramer}, {\em Essentials of Computational Chemistry}, Wiley, 2004.

\bibitem{dellnitz2001algorithms}
{\sc M.~Dellnitz, G.~Froyland, and O.~Junge}, {\em The algorithms behind {GAIO}
  -- set oriented numerical methods for dynamical systems}, in Ergodic theory,
  analysis, and efficient simulation of dynamical systems, Springer Berlin
  Heidelberg, 2001, pp.~145--174.

\bibitem{Del01}
\leavevmode\vrule height 2pt depth -1.6pt width 23pt, {\em The algorithms
  behind {G}aio-set oriented numerical methods for dynamical systems}, in
  Ergodic theory, analysis, and efficient simulation of dynamical systems,
  Springer, 2001, pp.~145--174.

\bibitem{Del99}
{\sc M.~Dellnitz and O.~Junge}, {\em On the approximation of complicated
  dynamical behaviour}, SIAM J. Num. Anal., 36 (1999).

\bibitem{Deu96}
{\sc P.~Deuflhard, M.~Dellnitz, O.~Junge, and C.~Sch{\"u}tte}, {\em Computation
  of essential molecular dynamics by subdivision techniques}, in Computational
  molecular dynamics: challenges, methods, ideas, Springer, 1999, pp.~98--115.

\bibitem{Deu00}
{\sc P.~Deuflhard, W.~Huisinga, A.~Fischer, and C.~Sch{\"u}tte}, {\em
  Identification of almost invariant aggregates in reversible nearly uncoupled
  markov chains}, Linear Algebra and its Applications, 315 (2000), pp.~39--59.

\bibitem{Deu04}
{\sc P.~Deuflhard and M.~Weber}, {\em Robust perron cluster analysis in
  conformation dynamics}, Linear algebra and its applications, 398 (2005),
  pp.~161--184.

\bibitem{EVE04}
{\sc W.~E and E.~Vanden-Eijnden}, {\em Metastability, conformation dynamics,
  and transition pathways in complex systems}, in Multiscale modelling and
  simulation, Springer, 2004, pp.~35--68.

\bibitem{EVE06}
\leavevmode\vrule height 2pt depth -1.6pt width 23pt, {\em Towards a theory of
  transition paths}, Journal of statistical physics, 123 (2006), pp.~503--523.

\bibitem{ElKa87}
{\sc R.~Elber and M.~Karplus}, {\em Multiple conformational states of proteins:
  a molecular dynamics analysis of myoglobin}, Science, 235 (1987),
  pp.~318--321.

\bibitem{FrMcM00}
{\sc H.~Frauenfelder and B.~McMahon}, {\em Energy landscape and fluctuations in
  proteins}, Annalen der Physik, 9 (2000), pp.~655--667.

\bibitem{Fri09}
{\sc G.~Friesecke, O.~Junge, and P.~Koltai}, {\em Mean field approximation in
  conformation dynamics}, Multiscale Modeling \& Simulation, 8 (2009),
  pp.~254--268.

\bibitem{Fro13}
{\sc G.~Froyland, O.~Junge, and P.~Koltai}, {\em Estimating long-term behavior
  of flows without trajectory integration: the infinitesimal generator
  approach}, SIAM Journal on Numerical Analysis, 51 (2013), pp.~223--247.

\bibitem{HuPa86}
{\sc U.~G. Haussmann and E.~Pardoux}, {\em Time reversal of diffusions}, The
  Annals of Probability,  (1986), pp.~1188--1205.

\bibitem{Hor67}
{\sc L.~H{\"o}rmander}, {\em Hypoelliptic second order differential equations},
  Acta Mathematica, 119 (1967), pp.~147--171.

\bibitem{HuaEtAl10}
{\sc X.~Huang, Y.~Yao, G.~R. Bowman, J.~Sun, L.~J. Guibas, G.~E. Carlsson, and
  V.~S. Pande}, {\em Constructing multi-resolution markov state models (msms)
  to elucidate rna hairpin folding mechanisms.}, in Pacific Symposium on
  Biocomputing, vol.~15, World Scientific, 2010, pp.~228--239.

\bibitem{Hui01}
{\sc W.~Huisinga}, {\em Metastability of Markovian systems}, PhD thesis, Freie
  Universit{\"a}t Berlin, 2001.

\bibitem{HMSch04}
{\sc W.~Huisinga, S.~Meyn, and C.~Sch{\"u}tte}, {\em Phase transitions and
  metastability in markovian and molecular systems}, Annals of Applied
  Probability,  (2004), pp.~419--458.

\bibitem{Hui05}
{\sc W.~Huisinga and B.~Schmidt}, {\em Metastability and dominant eigenvalues
  of transfer operators}, in New Algorithms for Macromolecular Simulation,
  B.~Leimkuhler, C.~Chipot, R.~Elber, A.~Laaksonen, A.~Mark, T.~Schlick,
  C.~Sch{\"u}tte, and R.~Skeel, eds., vol.~49 of Lecture Notes in Computational
  Science and Engineering, Springer Berlin Heidelberg, 2006, pp.~167--182.

\bibitem{Jun09}
{\sc O.~Junge and P.~Koltai}, {\em Discretization of the {F}robenius--{P}erron
  operator using a sparse haar tensor basis: the sparse {U}lam method}, SIAM
  Journal on Numerical Analysis, 47 (2009), pp.~3464--3485.

\bibitem{Kar91}
{\sc I.~Karatzas}, {\em Brownian motion and stochastic calculus}, vol.~113,
  springer, 1991.

\bibitem{Kol10}
{\sc P.~Koltai}, {\em Efficient approximation methods for the global long-term
  behavior of dynamical systems: theory, algorithms and examples}, PhD thesis,
  TU M\"unchen, 2010.

\bibitem{KuSt85}
{\sc S.~Kusuoka and D.~W. Stroock}, {\em Applications of the {M}alliavin
  calculus, part ii.}, J. Fac. Sci. Univ. Tokyo, Sect. IA, Math., 32 (1985),
  pp.~1--76.

\bibitem{Las94}
{\sc A.~Lasota and M.~C. Mackey}, {\em Chaos, fractals, and noise: stochastic
  aspects of dynamics}, vol.~97, Springer, 1994.

\bibitem{MaSt02}
{\sc J.~C. Mattingly and A.~M. Stuart}, {\em Geometric ergodicity of some
  hypo-elliptic diffusions for particle motions}, Markov Process. Related
  Fields, 8 (2002), pp.~199--214.

\bibitem{MaStHi02}
{\sc J.~C. Mattingly, A.~M. Stuart, and D.~J. Higham}, {\em Ergodicity for sdes
  and approximations: locally lipschitz vector fields and degenerate noise},
  Stochastic processes and their applications, 101 (2002), pp.~185--232.

\bibitem{Nel63}
{\sc E.~Nelson, E.~Nelson, E.~Nelson, and E.~Nelson}, {\em Dynamical theories
  of Brownian motion}, vol.~17, Princeton university press Princeton, 1967.

\bibitem{NoeEtAl06}
{\sc F.~No{\'e}, D.~Krachtus, J.~C. Smith, and S.~Fischer}, {\em Transition
  networks for the comprehensive characterization of complex conformational
  change in proteins}, Journal of Chemical Theory and Computation, 2 (2006),
  pp.~840--857.

\bibitem{Oks98}
{\sc B.~{\O}ksendal}, {\em Stochastic differential equations}, Springer, 2003.

\bibitem{OWPN00}
{\sc A.~Ostermann, R.~Waschipky, F.~G. Parak, and G.~U. Nienhaus}, {\em Ligand
  binding and conformational motions in myoglobin}, Nature, 404 (2000),
  pp.~205--208.

\bibitem{PBB10}
{\sc V.~S. Pande, K.~Beauchamp, and G.~R. Bowman}, {\em Everything you wanted
  to know about markov state models but were afraid to ask}, Methods, 52
  (2010), pp.~99--105.

\bibitem{Pav08}
{\sc G.~Pavliotis and A.~Stuart}, {\em Multiscale methods: averaging and
  homogenization}, vol.~53, Springer, 2008.

\bibitem{Paz83}
{\sc A.~Pazy}, {\em Semigroups of linear operators and applications to partial
  differential equations}, Springer-Verlag, New York, 1983.

\bibitem{PriEtAl11}
{\sc J.-H. Prinz, H.~Wu, M.~Sarich, B.~Keller, M.~Senne, M.~Held, J.~D.
  Chodera, C.~Sch{\"u}tte, and F.~No{\'e}}, {\em Markov models of molecular
  kinetics: Generation and validation}, The Journal of chemical physics, 134
  (2011), p.~174105.

\bibitem{Roe13}
{\sc S.~R{\"o}blitz and M.~Weber}, {\em Fuzzy spectral clustering by pcca+:
  application to markov state models and data classification}, Advances in Data
  Analysis and Classification, 7 (2013), pp.~147--179.

\bibitem{Sch99}
{\sc C.~Sch{\"u}tte}, {\em Conformational dynamics: Modelling, theory,
  algorithm, and application to biomolecules}, 1999.
\newblock Habilitation Thesis.

\bibitem{Deu01}
{\sc C.~Sch{\"u}tte, W.~Huisinga, and P.~Deuflhard}, {\em Transfer operator
  approach to conformational dynamics in biomolecular systems}, in Ergodic
  Theory, Analysis, and Efficient Simulation of Dynamical Systems, B.~Fiedler,
  ed., Springer Berlin Heidelberg, 2001, pp.~191--223.

\bibitem{SchEtAl11}
{\sc C.~Sch{\"u}tte, F.~No{\'e}, J.~Lu, M.~Sarich, and E.~Vanden-Eijnden}, {\em
  Markov state models based on milestoning}, The Journal of chemical physics,
  134 (2011), p.~204105.

\bibitem{SchSa13}
{\sc C.~Sch{\"u}tte and M.~Sarich}, {\em Metastability and Markov State Models
  in Molecular Dynamics.}, Courant Lecture Notes in Mathematics, 2013.

\bibitem{Tre00}
{\sc L.~N. Trefethen}, {\em Spectral methods in MATLAB}, vol.~10, Siam, 2000.

\bibitem{EVE10}
{\sc E.~Vanden-Eijnden}, {\em Transition-path theory and path-finding
  algorithms for the study of rare events}, Annual review of physical
  chemistry, 61 (2010), pp.~391--420.

\bibitem{Web06}
{\sc M.~Weber}, {\em Meshless Methods in Conformation Dynamics}, PhD thesis, FU
  Berlin, 2006.

\bibitem{Web12}
\leavevmode\vrule height 2pt depth -1.6pt width 23pt, {\em A subspace approach
  to molecular {M}arkov state models via a new infinitesimal generator.}, 2012.
\newblock Habilitation thesis.

\bibitem{Zei95}
{\sc E.~Zeidler}, {\em Applied functional analysis}, vol.~108, Springer, 1995.

\bibitem{Zhou98}
{\sc H.-X. Zhou, S.~T. Wlodek, and J.~A. McCammon}, {\em Conformation gating as
  a mechanism for enzyme specificity}, Proceedings of the National Academy of
  Sciences, 95 (1998), pp.~9280--9283.

\bibitem{Zwa01}
{\sc R.~Zwanzig}, {\em Nonequilibrium statistical mechanics}, Oxford University
  Press, 2001.

\end{thebibliography}

\end{document}